\theoremstyle{thmstyleone}%
\newtheorem{theorem}{Theorem}
\newtheorem{corollary}{Corollary}
\newtheorem{lemma}{Lemma}
\newtheorem{proposition}[theorem]{Proposition}%
\theoremstyle{thmstyletwo}%
\newtheorem{example}{Example}%
\newtheorem{remark}{Remark}%
\theoremstyle{thmstylethree}%
\newtheorem{definition}{Definition}%
\begin{document}

\title[{\bf\large Existence of Solutions and Relative Regularity Conditions for Polynomial Vector Optimization Problems}]{{\bf\large Existence of Solutions and Relative Regularity Conditions for Polynomial Vector Optimization Problems}}


\author*[1]{\fnm{Danyang} \sur{Liu}}\email{dyliu@cwnu.edu.cn}

\affil*[1]{\orgdiv{School of Mathematics and Information}, \orgname{China West Normal University}, \orgaddress{ \city{Nanchong}, \postcode{637009}, \state{Sichuan}, \country{P. R. China}}}


\abstract{In this paper, we establish the existence of the efficient solutions for polynomial vector optimization problems on a nonempty closed constraint set without any convexity and compactness assumptions. We first introduce the relative regularity conditions for vector optimization problems whose objective functions are a vector polynomial and investigate their  properties and characterizations. Moreover, we establish relationships between the relative regularity conditions, Palais-Smale condition, weak Palais-Smale condition, M-tameness and properness with respect to some index set. Under the relative regularity and non-regularity conditions, we establish nonemptiness of the efficient solution sets of the polynomial vector optimization problems respectively.  As a by-product, we infer Frank-Wolfe type theorems for a non-convex polynomial vector optimization problem.  Finally, we study the local properties and  genericity characteristics of the relative regularity conditions.}

\keywords{Polynomial vector optimization problem, relative regularity conditions, existence of solutions, genericity}


\pacs[MSC Classification (2020)]{90C29, 90C23, 49K40}

\maketitle

\section{Introduction}\label{sec1}
Throughout,  $\mathbf{R}^{n}$ denotes the $n$-dimensional Euclidean space with  the norm $\|\cdot\|$ and the inner product $\langle \cdot,\cdot\rangle$,  and $\mathbf{R}^{n}_{+}=\{x=(x_1,\cdots,x_n)\in \mathbf{R}^{n}:x_i \ge 0, i=1, \cdots, n\}$. In this paper, we consider the following polynomial vector optimization problem on  $K$:
$$ {\rm{PVOP}}(K, f): \qquad\mbox{Min}_{x\in K}f(x),$$
where $f=(f_{1},  \dots, f_{q}): \mathbf{R}^{n} \mapsto \mathbf{R}^{q}$ is  a vector polynomial such that each component function  $f_{i}$ is a polynomial with its degree $ \mbox{ deg } f_i=d_{i}$, and $K\subseteq \mathbf{R}^{n}$ is a nonempty unbounded closed set (not necessarily convex set or semi-algebraic set  \cite{HHV,BR,Bochnak}). In what follows, we always assume the each component polynomial $f_i$ of the objective function $f$ has a degree $d_{i}\geq 1$.

 Recall that a point $x^{*}\in K$ is said to be a Pareto efficient solution of ${\rm{PVOP}}(K, f)$ if for all $x\in K$, $$f(x)-f(x^{*})\notin - \mathbf{R}^{q}_{+}\backslash \{0\},$$
and   $x^{*}\in K$ is said to be a weak Pareto efficient solution of PVOP$(K, f)$ if for all $x\in K$, $$f(x)-f(x^{*})\notin - \mbox{ int }\mathbf{R}^{q}_{+}.$$
The Pareto efficient solution set and the weak Pareto efficient solution set of ${\rm{PVOP}}(K, f)$ are denoted by $SOL^{s}(K, f)$ and $SOL^{w}(K, f)$ respectively. Obviously, $SOL^{s}(K, f)\subseteq SOL^{w}(K, f)$.  When $q=1$,  ${\rm{PVOP}}(K, f)$ collapses to a polynomial scalar optimization problem denoted by $\mbox{PSOP}(K, f)$, whose solution set is denoted by $SOL(K, f)$.

Existence of efficient solutions play an important role in vector optimization theory. Numerous papers have considered the existence of solutions for the vector optimization problems, see \cite{Borwe,Deng,Kim,DTN,BT1,BT2,24LDY3}. Regularity condition has been  used  in \cite{Lee915} to investigate the existence of solutions and the continuity of the solution mapping for a quadratic programming problem.  Hieu \cite{HV1} established a Frank-Wolfe type theorem for a polynomial scalar optimization problem on a nonempty closed set when the objective function is bounded from below on the constraint set and the regularity condition holds and an Eaves type theorem for non-regular pseudoconvex optimization problems.  Hieu et al. \cite{HV2} proved that the solution set of an optimization problem corresponding to a polynomial complementarity problem is nonempty and compact by using the regularity condition of the polynomial complementarity problem. Meanwhile, some authors investigated the existence of efficient solutions of polynomial vector optimization problems. Kim et al.  \cite{DTN} obtained the nonempty of Pareto efficient solution sets for an unconstrained polynomial vector optimization problem when the Palais-Smale-type conditions hold and the image of the objective vector function has a bounded section. Duan et al. \cite{LGJ1} extended the work of \cite{DTN}. When the Palais-Smale-type conditions hold and the image of the objective vector function has a bounded section, they proved the existence of Pareto solutions of an constrained polynomial vector optimization problem under the regularity at infinity of the constraint set. When $K$ is a convex semi-algebraic set and  $f$ is a convex vector-valued polynomial, Lee et al. \cite{LGJ} proved that  ${\rm{PVOP}}(K, f)$ has a Pareto efficient solution if and only if  the image $f(K)$ of $f$ has a nonempty  bounded section. Recently, by using some powerful tools of asymptotic analysis, Liu et al.\cite{LDY1} studied the solvability for a class of regular polynomial vector optimization problem on a closed constraint set without convexity and semi-algebraic assumptions. Under the weak section-boundedness, convenience and non-degeneracy conditions, Liu et al.\cite{LDY2} obtained Frank-Wofle type theorems for polynomial vector optimization problem by using ways of semi-algebraic geometry. Based on asymptotic notions, Flores-Baz\'{a}n et al.\cite{Flores24} established coercivity properties, coercive and noncoercive existence results for weak efficient solutions of vector optimization problems. Inspired by the above works, in this paper, we study  the existence of Pareto efficient solution of ${\rm{PVOP}}(K, f)$ on a closed constraint set  without convexity and semi-algebraic assumptions. Our approach is mainly based on  asymptotic analysis which has widely been used in optimization problems, variational inequalities, complementarity problems, and equilibrium problems. See e.g. \cite{HV3,Lop,Gowda,Huangzh,FlorJOGO,FlorESAIM, FlorJCA}.

\par In this paper, the existence theorems of Pareto efficient solutions for $\text{PVOP}(K, f)$ are obtained under the relative (regularity / non-regularity) conditions. 	
Our main contributions are the following:
 {\begin{itemize}
  	\item  In \cite{Flores24,DTN,LGJ}, at least one of the convexity and coercivity conditions is supposed to obtain existence results for (Pareto / weak Pareto) efficient solutions of vector optimization problems. However, in this paper, we study polynomial vector optimization problems  with an arbitrary nonempty closed constraint set without any convexity, and coercivity assumptions.
 	
 \item  Existence results of weak Pareto efficient solutions obtained in \cite{Flores24} are obtained without any convexity and coercivity assumptions. In \cite{LDY1}, they obtained existence of Pareto efficient solutions for $\text{PVOP}(K, f)$ under the regularity conditions and boundedness from below condition. However, in this paper, we obtain nonemptiness of Pareto efficient solution set of the polynomial vector optimization problems under weaker regularity conditions and section-boundedness from below condition.
	
\item In this paper, we study the local properties of the relative regularity conditions and obtain genercity principle of the some relative regularity conditions. We extend and improve the corresponding results of \cite{HV1}.  Compared with \cite{DTN,LGJ1,LDY2}, our approach is mainly based on tools of asymptotic analysis, but not semi-algebraic theorem.
 \end{itemize}}

\par The rest of this paper is structured as follows: In Section 2, we present some fundamental notations and preliminary results essential for subsequent analysis.  In Section 3, we  systematically investigates key properties and characterizations of the relative regularity conditions. In Section 4, we establish and analyze the interconnections between the relative regularity conditions, Palais-Smale condition, weak Palais-Smale condition, M-tameness and properness with respect to some index set. Section 5 is devoted to study the existence of Pareto efficient solutions of ${\rm{PVOP}}(K, f)$ respectively under the relative regularity and non-regularity conditions. In Section 6, we discuss the local properties of relative regularity conditions and establish the genericity of relative regularity conditions within appropriate function spaces. Finally, we make a conclusion in Section 7.

\section{Preliminaries}

In this section, we recall some concepts and results. A nonempty subset $D\subseteq\mathbf{R}^{n}$ is called a cone, if $tx\in D$ for any $x\in D$ and $t>0$. Given a nonempty closed set $K\subset  \mathbf{R}^{n}$, the \emph{asymptotic cone} $K_{\infty}$  of $K$ is defined by
$$K_{\infty}=\{v\in \mathbf{R}^{n}: \mbox{there}\ \mbox{exist}\ t_{k}\rightarrow +\infty\ \mbox{and}\ x_{k}\in K\ \mbox{such}\ \mbox{that}\ \lim_{k\rightarrow +\infty}\frac{x_{k}}{t_{k}}=v\}.$$
As known, $K_{\infty}$ is a closed cone and $(K_{\infty})_{\infty}=K_{\infty}$, and $K$ is bounded if and only if $K_{\infty}=\{0\}$. These results can be found in  \cite{RC,AA}. Let $\bar x\in \mathbf{R}^{n}$. Then the sublevel set $K_{\bar x}$ is defined by $K_{\bar x}=\{x\in K\vert f(x)\leq f(\bar x)\}$.

\begin{definition}(\cite[Definition 2.1]{LDY1})
Let $p=(p_{1}, \dots, p_{q}): \mathbf{R}^{n} \mapsto \mathbf{R}^{q}$ be a vector polynomial  with $  \mbox{ deg }p_{i}=d_{i}$, $i=1,  \dots, q$. We say that  $p^{\infty}_{\mathbf{d}}$ is the \emph{vector recession polynomial} (or the \emph{vector leading term}) of $p$, where $\mathbf{d}=(d_1, d_2,\dots,d_q)$,
$$p_\mathbf{d}^{\infty}(x)=((p_{1})^{\infty}_{d_{1}}(x), (p_{1})^{\infty}_{d_{2}}(x), \dots, (p_{1})^{\infty}_{d_{q}}(x)) \emph{ and } (p_{1})^{\infty}_{d_{i}}(x)=\lim_{\lambda\to +\infty}\frac{p_{i}(\lambda x)}{\lambda^{d_{i}}},\quad\forall x\in \mathbf{R}^{n}.$$

\end{definition}

\begin{remark}\label{recpoly}
	When $q=1$, $p^{\infty}$ is just a recession polynomial of $p$ (see \cite{HV1}).
\end{remark}

\begin{definition}(\cite[Definition 3.3]{DTN})\label{strong section}
	Let $C\subseteq\mathbf{R}^q$ be a subset and $\bar t\in \mathbf{R}^q$. The set $C\bigcap (\bar t -\mathbf{R}^q_+)$ is called a section of $C$ at $\bar t$ and denoted by $\lbrack C \rbrack_{\bar t}$. The section $\lbrack C \rbrack_{\bar t}$ is said to be bounded if there exists $r\in \mathbf{R}^q$ such that
	$$\lbrack C \rbrack_{\bar t}\subseteq r +\mathbf{R}^q_+.$$
\end{definition}

\begin{definition} (\cite[Definition 2.2]{LDY2})\label{strong section-boundedness}
	Let $x'\in C$. A vector-valued function $T: \mathbf{R}^{n}\rightarrow \mathbf{R}^{q}$ is said to be \emph{section-bounded from below} at $x'$, if the section $\lbrack T(C) \rbrack_{T(x')}$ is bounded.
\end{definition}

By Definition \ref{strong section-boundedness},  a vector-valued function $T$ is section-bounded from below at $x'\in C$ if and only if there exists $r=(r_1, r_2, \cdots, r_q)\in \mathbf{R}^q$ such that
$$T_{i}(x)\geq r_i$$
for any $x\in C$ satisfying with $T_i(x)\leq T_i(x'), i\in \{1, 2, \cdots, q\}$. In \cite{DTN,LGJ1}, the section-boundedness from below has been used to prove the existence of efficient solutions for polynomial vector optimization problems.

Next, we recall that  the definition of the weak section-boundedness from below.

\begin{definition}(\cite[Definition 2.3]{LDY2})\label{section-boundedness}	
A vector-valued function $T=(T_{1}, T_{2}, \cdots, T_{q}): \mathbf{R}^{n}\rightarrow \mathbf{R}^{q}$ is said to be \emph{weakly section-bounded from below} on $C$,  if there exist $\bar x\in C$ and $\bar a\in \mathbf{R}^{q}$ such that
$$T(x)-\bar a\notin -{\rm{ int }}\mathbf{R}^{q}_{+},\quad\forall x\in C_{\bar x},$$
where $C_{\bar x}=\{x\in C: T(x)\leq T(\bar x)\}$.
\end{definition}

\begin{remark}\label{strong section compare}
    By Definitions \ref{strong section-boundedness} and \ref{section-boundedness}, the section-boundedness from below implies the weak section-boundedness from below. The inverse is not true in general. By \cite[Proposition 3]{LDY2}, we know that a equivalent characterization of the weak section-boundedness from below on $C$ of $T$ has been given, i.e, $T$ is weakly section-bounded from below on $C$ if and only if there exist $\bar x\in C$ and $i_{0}\in \{1, 2, \cdots, q\}$ such that $T_{i_{0}}$ is bounded from below on $C_{\bar x}$. Motivated by the above discussions, we now propose the following definition.
\end{remark}

\begin{definition}\label{exchangesec}
	Let $\bar x\in C$. A vector-valued function $T=(T_{1}, T_{2}, \cdots, T_{q}): \mathbf{R}^{n}\rightarrow \mathbf{R}^{q}$ is said to be \emph{$I$-section-bounded from below} at $\bar x$, if there exists a nonempty index set $I\subseteq\{1,2,\dots,q\}$ such that for any $i\in I$, $T_{i}$ is bounded from below on $C_{\bar x}$.
\end{definition}

\begin{remark}\label{20250227a}
By Definition \ref{exchangesec}, if $I=\{1,2,\dots,q\}$, then the $I$-section-boundedness from below reduces to section-boundedness from below. If $I\subsetneqq\{1,2,\dots,q\}$, then the $I$-section-boundedness from below reduces to weak section-boundedness from below.
\end{remark}

Now, we recall that the vector polynomial $f$ is said to be \emph{the strongly regular} (resp. \emph{the weakly regular}) on $K$, if $SOL^{w}(K_\infty, f^\infty_\mathbf{d})$ (resp. $SOL^{s}(K_\infty, f^\infty_\mathbf{d})$) is bounded (see. \cite[Definition 2.3]{LDY1}). When $q=1$, the scalar polynomial $f$ is said to be \emph{the regular} on $K$, that is,  $SOL(K_\infty, f^\infty_d)$ is bounded (see, \cite[Definition 2.1]{HV1}).

To explore polynomial vector optimization problems under relaxed regularity assumptions, we develop  regularity criteria associated with the asymptotic cone $K_\infty$.  Let $\bar x\in K$ and $K_{\bar x}=\{x\in K: f_i(x)\leq f_i(\bar x), i=1,2,\dots, q\}$. Consider a nonempty closed set $S\subseteq\mathbf{R}^n$ such that $(K_{\bar x})_\infty\subseteq S_\infty \subseteq K_\infty$. In general, such a set $S$ can be found. For example, if $K_{\bar x}\subseteq S \subseteq K$, then $(K_{\bar x})_\infty\subseteq S_\infty \subseteq K_\infty$. And we can also easy to prove $(K_{\bar x})_\infty\subseteq K_\infty\bigcap\{x\in\mathbf{R}^n\mid f(x)\leq f(\bar x)\}_\infty\subseteq K_\infty\bigcap\{x\in \mathbf{R}^n\mid f^\infty_\mathbf{d}(x)\leq 0\}\subseteq K_\infty$. Indeed, by \cite[Proposition 3.9]{RC}, the first inclusion relation holds. In particular, when $f$ is a convex mapping and $K$ is a convex set, we know that the first inclusion relation as an equation. Next, we claim that the second inclusion relation holds. Indeed, let $v\in K_\infty\bigcap\{x\in\mathbf{R}^n\mid f(x)\leq f(\bar x)\}_\infty$. Then there exist $v_k\in \{x\in\mathbf{R}^n\mid f(x)\leq f(\bar x)\}$ and $\lambda_k>0$ with $\lambda_k\to +\infty$ as $k\to +\infty$ such that $\frac{v_k}{\lambda_k}\to v$ as $k\to +\infty$. Since $v_k\in \{x\in\mathbf{R}^n\mid f(x)\leq f(\bar x)\}$, we have $f_i(v_k)\leq f_i(\bar x)$ for each $i\in \{1,2,\dots,q\}$. Dividing the both sides of the this inequality by $\lambda_k^{d_i}$ and then letting $k\rightarrow +\infty$, we get $(f_i)^\infty_{d_i}(v)\leq 0$ for each $i\in \{1,2,\dots,q\}$. Thus, it follows from $v\in K_\infty$ that $v\in \{x\in K_\infty\mid f^\infty_\mathbf{d}(x)\leq 0\}$. So $K_\infty\bigcap\{x\in\mathbf{R}^n\mid f(x)\leq f(\bar x)\}_\infty\subseteq \{x\in K_\infty\mid f^\infty_\mathbf{d}(x)\leq 0\}$. In general, when $f$ is a convex mapping and $K$ is a convex set, the equation $K_\infty\bigcap\{x\in\mathbf{R}^n\mid f(x)\leq f(\bar x)\}_\infty=\{x\in K_\infty\mid f^\infty_\mathbf{d}(x)\leq 0\}$ may not hold. For example, let $f: \mathbf{R}^2\mapsto \mathbf{R}, f(x)=x^2_1+x_2$ and $K=\{(x_1,x_2)\in\mathbf{R}^2\mid 0\leq x_1, 0\leq x_2\}$. Clearly, $K_\infty\bigcap\{x\in\mathbf{R}^n\mid f(x)\leq f(\bar x)\}_\infty\subsetneqq\{x\in K_\infty\mid f^\infty_\mathbf{d}(x)\leq 0\}$. Thus, if $S_\infty=K_\infty\bigcap\{x\in\mathbf{R}^n\mid f(x)\leq f(\bar x)\}_\infty$ or $S_\infty=K_\infty\bigcap\{x\in \mathbf{R}^n\mid f^\infty_\mathbf{d}(x)\leq 0\}$, then we obtain  $(K_{\bar x})_\infty\subseteq S_\infty \subseteq K_\infty$. By the above discussions, we introduce the following definition.

\begin{definition}  We say that
\begin{itemize}
\item[(i)] the vector polynomial $f$ is \emph{relatively $I$-$\mathbf{R}^q_+$-zero-regular} with $S$ on $K$, if there exist $\bar x\in K$, a nonempty closed set $S\subseteq\mathbf{R}^n$ satisfying with $(K_{\bar x})_\infty\subseteq S_\infty\subseteq K_\infty$ and $\lambda=(\lambda_1,\lambda_2,\dots,\lambda_q)\in \mathbf{R}^q_+\backslash\{0\}$ with the index set $I=\{i\in \{1,2,\dots,q\}\vert\lambda_i\neq 0\}$ such that $f_\lambda=\sum_{i=1}^{q}\lambda_i f_i=\sum_{i\in I}\lambda_i f_i$ is \emph{regular} on $S$, that is, the solution set $SOL(S_\infty, \{\sum_{i=1}^{q}\lambda_i f_i\}^{\infty}_d)$ is bounded, where $d=\deg \sum_{i=1}^{q}\lambda_i f_i$. Otherwise, $f$ is said to be \emph{relatively $\mathbf{R}^q_+$-zero-non-regular} on $K$. In particular, if $I=\{1,2,\dots,q\}$, then we say that  the vector polynomial $f$ is \emph{relatively $\mathbf{R}^q_+$-zero-regular} with $S$ on $K$.
\item[(ii)] the vector polynomial $f$ is \emph{relatively weakly regular} with $S$ on $K$, if there exist $\bar x\in K$ and a nonempty closed set $S\subseteq\mathbf{R}^n$ satisfying with $(K_{\bar x})_\infty\subseteq S_\infty\subseteq K_\infty$ such that $f$ is \emph{weakly  regular} on $S$, that is, the solution set $SOL^{s}(S_{\infty}, f^{\infty}_{\mathbf{d}})$  is bounded. Otherwise, $f$ is said to be \emph{relatively weakly non-regular} on $K$.
\item[(iii)] the vector polynomial $f$ is \emph{ relatively strongly regular} with $S$ on $K$, if there exist $\bar x\in K$ and a nonempty closed set $S\subseteq\mathbf{R}^n$ satisfying with $(K_{\bar x})_\infty\subseteq S_\infty\subseteq K_\infty$  such that $f$ is \emph{strongly regular} on $S$, that is, the solution set $SOL^{w}(S_{\infty}, f^{\infty}_{\mathbf{d}})$ is bounded. Otherwise, $f$ is said to be \emph{relatively strongly non-regular} on $K$.
\end{itemize}
\end{definition}

\begin{remark}\label{compa}
 Clearly, $SOL(S_\infty, \{\sum_{i=1}^{q}\lambda_i f_i\}^{\infty}_d)\subseteq SOL^{w}(S_{\infty}, f^{\infty}_{\mathbf{d}})$ and $SOL^{s}(S_{\infty}, f^{\infty}_{\mathbf{d}})\subseteq SOL^{w}(S_{\infty}, f^{\infty}_{\mathbf{d}})$. So the relatively strong regularity implies the relatively weak regularity, and the relatively strong regularity implies relative $I$-$\mathbf{R}^q_+$-zero-regularity. By \cite{HV1}, we know that relative $I$-$\mathbf{R}^q_+$-zero-regularity is equivalent to $SOL(S_\infty, \{\sum_{i=1}^{q}\lambda_i f_i\}^{\infty}_d)=\{0\}$ or $SOL(S_\infty, \{\sum_{i=1}^{q}\lambda_i f_i\}^{\infty}_d)=\emptyset$  for some $\lambda=(\lambda_1,\lambda_2,\dots,\lambda_q)\in \mathbf{R}^q_+\backslash\{0\}$.  By Proposition 3.3 and Remark 3.1 in \cite{LDY1}, we know that \emph{ relatively weak regularity} (resp.  \emph{ relatively strong regularity}) with $S$ on $K$ of $f$ is equivalent to $SOL^{s}(S_{\infty}, f^{\infty}_{\mathbf{d}})=\{0\}$ or $SOL^{s}(S_{\infty}, f^{\infty}_{\mathbf{d}})=\emptyset$ (resp. $SOL^{w}(S_{\infty}, f^{\infty}_{\mathbf{d}})=\{0\}$ or $SOL^{w}(S_{\infty}, f^{\infty}_{\mathbf{d}})=\emptyset$).
\end{remark}

\begin{remark}
 When $q=1$, we say that the relative $I$-$\mathbf{R}^q_+$-zero-regularity, relatively weak regularity and relatively strong regularity are \emph{relative regularity}. In particular, if let $S=K_\infty$, then all the relative regularity conditions coincide with the regularity condition. If $f$ is bounded from below on $K$, then the relative regularity condition is weaker than the regularity condition. Indeed, when $f$ is bounded from below on $K$, if $f$ is regular on $K$, then we know $SOL(K_\infty, f^\infty_d)=\{0\}$ by \cite{HV1}. Let $S=K_\infty$. Then $S_\infty=S$. Thus, $f$ is relatively regular with $S$ on $K$. However, the following example shows that its inverse may not true.
\begin{example}
 	 Consider the polynomial $f: \mathbf{R}^2\mapsto \mathbf{R}, f(x_1, x_2)=x^4_1+x^2_2$ and $K=\mathbf{R}^2$. Clearly, $f$ is bounded from below on $K$, $K_{\infty}=K$, and $f^\infty_d(x_1, x_2)=x^4_1$. On the one hand, we know that $SOL(K_{\infty}, f^{\infty}_d)=\{(x_1,x_2)\in \mathbf{R}^2: x_1=0\}$, which is an unbounded set. Thus, $f$ is non-regular on $K$. On the other hand, let $S=K_{\infty}\bigcap\{x=(x_1,x_2)\in \mathbf{R}^2\mid f(x_1, x_2)\leq f(0,0)\}_\infty$. Clearly, $S$ is a nonempty closed cone and $(K_{\bar x})_\infty\subseteq S_\infty\subseteq K_\infty$ for any $\bar x\in K$. It is easy to calculate $S_\infty=\{(0, 0)\}$. And so, $SOL(S_{\infty}, f^{\infty}_{\mathbf{d}})=\{(0,0)\}$. Thus, $f$ is relatively regular with $S$ on $K$.
\end{example}
 Next, we recall that the scalar mapping $f$ is said to be coercive on $K$, if $\lim_{x\in K, \|x\|\to +\infty} f(x)=+\infty$. Let $\bar x\in K$ and $S=K_{\bar x}$. So $(K_{\bar x})_\infty\subseteq S_\infty\subseteq K_\infty$. It is easy to prove that if the scalar function $f$ is bounded from below on $K$, then the coercivity on $K$ of $f$ is equivalent to the relative regularity with $S$ on $K$ of $f$. Indeed, if $f$ is coercive on $K$, then $S$ is bounded. So $S_\infty=(K_{\bar x})_\infty=\{0\}$. Thus, we have $SOL(S_{\infty}, f^{\infty}_d)=\{0\}$. So $f$ is relatively regular with $S$ on $K$. Conversely, suppose on the contrary that $f$ is not coercive on $K$. Then there exists a sequence $\{x_k\}\subseteq \{x\in K\mid f(x)\leq f(\bar x)\}$ such that $\|x_k\|\to +\infty$ as $k\to +\infty$. Assume that $\frac{x_k}{\|x_k\|}\to v_0$ as $k\to +\infty$. Then $v_0\in S_\infty=(K_{\bar x})_\infty\setminus\{0\}$ and dividing the both sides of the inequality $f(x_k)\leq f(\bar x)$ by ${x_k}^{d}$ with $d=\deg f$ and then letting $k\rightarrow +\infty$, we get $f^\infty_d(v_0)\leq 0$. By \cite{HV1} and boundedness from below on $K$ of $f$, we have $f^\infty_d\geq 0$ on $K_\infty$. It follows that $v_0\in SOL(S_{\infty}, f^{\infty}_d)\setminus\{0\}$, which is a contradiction with the relative regularity with $S$ on $K$. The following example shows that the condition of boundedness from below on $K$ of $f$ can not drop.

 \begin{example}
    Consider the polynomial $f: \mathbf{R}\mapsto \mathbf{R}, f(x)=x$ and $K=\mathbf{R}$. Clearly, $f$ is not bounded from below on $K$ and $f^{\infty}_d=f$. Let $\bar x\in K$ and $S={K_{\bar x}}$. it is easy to prove $SOL(S_{\infty}, f^{\infty}_d)=\emptyset$. Thus, $f$ is relatively  regular with $S$ on $K$. However, it is clear that $f$ is not coercive on $K$.
 \end{example}

It is notice that when $K$ is a convex set and $f$ is a convex mapping, we let $\bar x\in K$ and $S=K_\infty\bigcap\{x\in\mathbf{R}^n\mid f(x)\leq f(\bar x)\}_\infty$. Then $(K_{\bar x})_\infty=K_\infty\bigcap\{x\in\mathbf{R}^n\mid f(x)\leq f(\bar x)\}_\infty=S_\infty$. Thus, if $f$ is bounded from below on $K$, then the coercivity on $K$ of $f$ is also equivalent to the relative regularity with $S$ on $K$ of $f$.
\end{remark}

\begin{remark}
 When $q\geq 2$, in \cite[Definition 3.1]{GGGG}, $f$ is said to be $\mathbf{R}^q_+$-zero-coercive on $K$ with respect to $\alpha\in \mathbf{R}^q_+\backslash\{0\}$, if $\lim_{x\in K, \|x\|\to +\infty} \langle \alpha, f(x)\rangle=+\infty$. We know that the relatively (weak / strong) regularity with $S$ on $K$ of $f$ is weaker than the $\mathbf{R}^q_+$-zero-coercivity on $K$ of $f$. Indeed, let $\bar x$ and $S=K_{\bar x}$. If $f$ is $\mathbf{R}^q_+$-zero-coercive on $K$, then for any the sequence $\{x_k\}\subseteq K$ with $\|x_k\|\to +\infty$, there exists $i_0\in\{1,2, \dots,q\}$ such that $f_{i_0}(x_k)\to +\infty$. So $K_{\bar x}$ is a bounded set. Thus, $S_{\infty}=\{0\}$. It follows that $SOL^{s}(S_{\infty}, f^{\infty}_{\mathbf{d}})=\{0\}$ and $SOL^{w}(S_{\infty}, f^{\infty}_{\mathbf{d}})=\{0\}$. Thus, $f$ is relatively strong regular with $S$ on $K$, and so $f$ is relatively weak regular. However, the following example shows that its inverse may not hold in general.
 \begin{example}\label{250709}
    Consider the vector polynomial $f: \mathbf{R}\mapsto \mathbf{R}^2, f(x)=(-x^3, x^3)$ and $K=\mathbf{R}$. Then, let $\bar x=0\in K$ and $S=K_{\infty}\bigcap\{x\in \mathbf{R}\mid f^\infty_\mathbf{d}(x)\leq 0\}$. Then $S=S_\infty$ and $(K_{\bar x})_\infty\subseteq S_\infty\subseteq K_\infty$. It is easy to calculate that $S_\infty=\{0\}$. So $SOL^s(S_{\infty}, f^{\infty}_{\mathbf{d}})=\{0\}$ and $SOL^w(S_{\infty}, f^{\infty}_{\mathbf{d}})=\{0\}$. Thus, $f$ is both relatively weakly regular and relatively strongly regular with $S$ on $K$. However, it is clear that $f$ is not $\mathbf{R}^2_+$-zero-coercive on $K$.
 \end{example}
 \end{remark}

By the above Example \ref{250709}, it is notice that the relative $I$-$\mathbf{R}^q_+$-zero-regularity is also weaker than the $\mathbf{R}^q_+$-zero-coercivity.

\section{Characteristics and properties of the relative regularity conditions}

In this section, we shall discuss the properties and characterizations of the relative regularity conditions. We  obtain some necessary conditions of existence of the Pareto efficient solutions of ${\rm{PVOP}}(K,f)$. We first give characterizations of $SOL^{s}(S_{\infty}, f^{\infty}_{\mathbf{d}})=\emptyset$ and $SOL^{w}(S_{\infty}, f^{\infty}_{\mathbf{d}})=\emptyset$.

\begin{proposition}\label{necessary}
	Let the nonempty closed set $S\subseteq \mathbf{R^n}$ satisfying with $S_\infty\subseteq \{x\in \mathbf{R}^n\mid f^\infty_{\mathbf{d}}(x)\leq 0\}$. Then the following conclusions hold:
	\begin{itemize}
	\item[(i)] $SOL^{s}(S_{\infty}, f^{\infty}_{\mathbf{d}})=\emptyset$  if and only if $0\notin SOL^{s}(S_{\infty}, f^{\infty}_{\mathbf{d}})$;
	\item[(ii)] $SOL^{w}(S_{\infty}, f^{\infty}_{\mathbf{d}})=\emptyset$  if and only if $0\notin SOL^{w}(S_{\infty}, f^{\infty}_{\mathbf{d}})$.
	\end{itemize}
\end{proposition}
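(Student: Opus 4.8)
The plan is to dispatch the two ``only if'' directions as trivialities (an empty solution set cannot contain $0$) and concentrate all the work on the ``if'' directions, which I would prove in contrapositive form, namely $SOL \neq \emptyset \Rightarrow 0 \in SOL$. First I would record two structural facts that drive everything. Since $S$ is closed, $S_\infty$ is a closed cone, so $0 \in S_\infty$ and $tv \in S_\infty$ whenever $v \in S_\infty$ and $t>0$; and since each leading term $(f_i)^\infty_{d_i}$ is homogeneous of degree $d_i\ge 1$, we have $(f_i)^\infty_{d_i}(tv)=t^{d_i}(f_i)^\infty_{d_i}(v)$ and $f^\infty_{\mathbf{d}}(0)=0$. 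Combining $f^\infty_{\mathbf{d}}(0)=0$ with the standing hypothesis $S_\infty\subseteq\{x\mid f^\infty_{\mathbf{d}}(x)\le 0\}$, I would note the clean reformulations: $0\in SOL^{w}(S_\infty,f^\infty_{\mathbf{d}})$ is equivalent to ``no $x\in S_\infty$ has $f^\infty_{\mathbf{d}}(x)\in-{\rm int}\,\mathbf{R}^q_+$'', while $0\in SOL^{s}(S_\infty,f^\infty_{\mathbf{d}})$ is equivalent to ``$f^\infty_{\mathbf{d}}\equiv 0$ on $S_\infty$'' (every value already lies in $-\mathbf{R}^q_+$, so avoiding $-\mathbf{R}^q_+\setminus\{0\}$ forces it to be $0$).

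For (ii) I would take $v^{*}\in SOL^{w}(S_\infty,f^\infty_{\mathbf{d}})$ and argue by contradiction: if some $x\in S_\infty$ had $(f_i)^\infty_{d_i}(x)<0$ for every $i$, then $tx\in S_\infty$ and $(f_i)^\infty_{d_i}(tx)=t^{d_i}(f_i)^\infty_{d_i}(x)\to-\infty$ for each $i$, so for $t$ large we get $(f_i)^\infty_{d_i}(tx)<(f_i)^\infty_{d_i}(v^{*})$ simultaneously in all components, i.e. $tx$ weakly dominates $v^{*}$, contradicting its weak efficiency. Hence no such $x$ exists and $0\in SOL^{w}$.

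For (i) I would take $v^{*}\in SOL^{s}(S_\infty,f^\infty_{\mathbf{d}})$ and proceed in two steps. Step one: show $f^\infty_{\mathbf{d}}(v^{*})=0$. The hypothesis gives $f^\infty_{\mathbf{d}}(v^{*})\le 0$; if it were nonzero then $J^{*}=\{i:(f_i)^\infty_{d_i}(v^{*})<0\}$ is nonempty with the other components zero, and scaling by $t>1$ yields $(f_i)^\infty_{d_i}(tv^{*})=t^{d_i}(f_i)^\infty_{d_i}(v^{*})<(f_i)^\infty_{d_i}(v^{*})$ for $i\in J^{*}$ and equality otherwise, so $tv^{*}$ Pareto-dominates $v^{*}$ — impossible. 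Step two: upgrade this to $f^\infty_{\mathbf{d}}\equiv 0$ on all of $S_\infty$. If some $y\in S_\infty$ had $f^\infty_{\mathbf{d}}(y)\neq 0$, then $J=\{i:(f_i)^\infty_{d_i}(y)<0\}\neq\emptyset$ with the remaining components zero; scaling $ty$ for large $t$ drives the $J$-components to $-\infty$ (strictly below the corresponding zero entries of $f^\infty_{\mathbf{d}}(v^{*})$) while the non-$J$ components stay at $0=(f_i)^\infty_{d_i}(v^{*})$, so $ty$ Pareto-dominates $v^{*}$ — again impossible. Therefore $f^\infty_{\mathbf{d}}\equiv 0$ on $S_\infty$, i.e. $0\in SOL^{s}$.

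The main obstacle I expect is precisely step two of (i). The naive attempt — ``take a point $y$ dominating $0$ and scale it to dominate everything'' — fails for Pareto efficiency because $S_\infty$ need not be convex (so I cannot form $v^{*}+ty$) and a single scaled dominator $ty$ improves only the components where $(f_i)^\infty_{d_i}(y)<0$, leaving $v^{*}$ possibly strictly better in the components where $y$'s leading term vanishes. The resolution is the preliminary observation that any strong-efficient $v^{*}$ already satisfies $f^\infty_{\mathbf{d}}(v^{*})=0$, which annihilates exactly those troublesome components and lets the scaling argument close. This two-step separation, together with the essential use of $S_\infty\subseteq\{f^\infty_{\mathbf{d}}\le 0\}$ (which is what makes self-scaling monotone), is the crux; the weak case (ii) sidesteps it because strict negativity in all components is preserved and amplified under scaling.
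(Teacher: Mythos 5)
Your proposal is correct, and for part (i) it is essentially the paper's own argument: both rest on the same homogeneity/scaling step, namely that under the hypothesis $S_\infty\subseteq\{x\in\mathbf{R}^n\mid f^\infty_{\mathbf{d}}(x)\le 0\}$, any point of $S_\infty$ whose leading-term values are nonpositive but not all zero is strictly Pareto-improved by $tv$ with $t>1$, which is again in the cone $S_\infty$. The only difference is organizational: the paper argues by contradiction, using $0\notin SOL^{s}(S_{\infty}, f^{\infty}_{\mathbf{d}})$ to deduce that an efficient point $v_1$ must have $f^{\infty}_{\mathbf{d}}(v_1)\neq 0$ and then scaling $v_1$ to contradict its efficiency, whereas you run the same mechanism forwards (efficiency forces $f^\infty_{\mathbf{d}}(v^*)=0$, a second domination step forces $f^\infty_{\mathbf{d}}\equiv 0$ on $S_\infty$, hence $0\in SOL^{s}$). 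For part (ii) the paper simply cites Proposition 3.1 of the reference [LDY1], while you give a short self-contained scaling-to-$-\infty$ argument; it is worth noting that your proof of (ii) never uses the standing hypothesis on $S_\infty$, so that hypothesis is genuinely only needed for part (i).
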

\begin{proof}
\emph{(i)}: We only need to prove the sufficiency. Since $S_\infty\subseteq \{x\in \mathbf{R}^n\mid f^\infty_{\mathbf{d}}(x)\leq 0\}$, we can easy to prove $S_{\infty}=\{x\in S_\infty\mid f^\infty_{\mathbf{d}}(x)\leq 0\}$. Suppose on the contrary that $SOL^{s}(S_{\infty}, f^{\infty}_{\mathbf{d}})\neq\emptyset$. Since $0\notin SOL^{s}(S_{\infty}, f^{\infty}_{\mathbf{d}})$ and $f^{\infty}(0)=0$, there exists $v_{1}\in SOL^{s}(S_{\infty}, f^{\infty}_{\mathbf{d}})$ such that $f^{\infty}(v_1)\neq 0$. Because $S_{\infty}=\{x\in S_\infty\mid f^\infty_{\mathbf{d}}(x)\leq 0\}$, we have $v_{1}\in SOL^{s}(\{x\in S_\infty\mid f^\infty_{\mathbf{d}}(x)\leq 0\}, f^{\infty}_{\mathbf{d}})$. By $f^{\infty}(v_1)\neq 0$ and  $v_{1}\in \{x\in S_\infty\mid f^\infty_{\mathbf{d}}(x)\leq 0\}$, we have $f^\infty_i(v_1)\leq 0$ for all $i=1,2,\dots,i_0-1,i_0+1,\dots,q$ and $f^{\infty}_{i_{0}}(v_1)< 0$ for some $i_0$. It follows that for all $t>1$,
$$f^\infty_i(tv_1)-f^\infty_i(v_1)\leq 0 \ and \ f^{\infty}_{i_{0}}(tv_1)-f^{\infty}_{i_{0}}(v_1)< 0$$
for all $i=1,2,\dots,i_0-1,i_0+1,\dots,q$. Since $tv_1\in S_{\infty}$, we have  $v_{1}\notin SOL^{s}(S_{\infty}, f^{\infty}_{\mathbf{d}})$, which is a contradiction with $v_{1}\in SOL^{s}(S_{\infty}, f^{\infty}_{\mathbf{d}})$.


\emph{(ii)}: By \cite[Proposition 3.1]{LDY1}, this result can be obtained, directly.
\end{proof}

 Now, we give an example to illustrate Proposition \ref{necessary}.
\begin{example}\label{emptyex}
	Consider the vector polynomial $f=(f_{1}, f_{2})$  with
$$f_{1}(x_{1}, x_{2})=x_{1}, f_{2}(x_{1}, x_{2})=x_{2}$$
and
$$K=\mathbf{R}^2.$$
Let $\bar x=(0, 0)$. It is easy to verify that $(K_{\bar x})_{\infty}=\{(x_{1}, x_{2})\in \mathbf{R}^{2}: x_{1}\leq 0, x_{2}\leq 0\}$,  $(f_{1})^{\infty}_{d_1}(x_{1}, x_{2})=x_{1}$, and $(f_{2})^{\infty}_{d_2}(x_{1}, x_{2})=x_{2}$. Let $S=(K_{\bar x})_{\infty}$. Then $S_\infty\subseteq\{x\in \mathbf{R}^2\mid f^\infty(x)\leq 0\}$. So $0\notin SOL^{s}(S_{\infty}, f^{\infty}_{\mathbf{d}})$ and $0\notin SOL^{w}(S_{\infty}, f^{\infty}_{\mathbf{d}})$ since $(f_{1})^{\infty}_{d_1}(-1, -1)=(f_{2})^{\infty}_{d_2}(-1, -1)=-1<0=(f_{1})^{\infty}_{d_1}(0, 0)=(f_{2})^{\infty}_{d_2}(0, 0)$. By Proposition \ref{necessary}, $SOL^{s}(S_{\infty}, f^{\infty}_{\mathbf{d}})=SOL^{w}(S_{\infty}, f^{\infty}_{\mathbf{d}})=\emptyset$.
\end{example}

\begin{proposition}\label{notbounded}
	Let the nonempty set $S\subseteq \mathbf{R^n}$. Then the following results hold:
	\begin{itemize}
		\item[(i)] If $SOL^{w}(S_{\infty}, f^{\infty}_{\mathbf{d}})=\emptyset$, then $f_{i}$ is unbounded from below on $S$ for all $i\in \{1, \dots, q\}$.
		\item[(ii)] If $SOL^{s}(S_{\infty}, f^{\infty}_{\mathbf{d}})=\emptyset$, then there exists $i_0\in\{1,2, \dots, q\}$ such that $f_{i_0}$ is unbounded from below on $S$.
	\end{itemize}
\end{proposition}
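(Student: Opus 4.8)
The plan is to prove both statements by contraposition and to reduce each to a single auxiliary fact about recession polynomials, after which the origin $0$ will serve as the required (weak) efficient solution. The whole argument rests on the observation that boundedness from below of a component on $S$ forces its leading form to be nonnegative on $S_\infty$, which is the vector analogue of the scalar nonnegativity property already used (via \cite{HV1}) in the coercivity discussion earlier in this section.

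First I would establish the key lemma: \emph{if a scalar polynomial $g$ of degree $d\ge 1$ is bounded from below on $S$, then its recession polynomial $g^\infty_d$ satisfies $g^\infty_d\ge 0$ on $S_\infty$}. To see this, fix $v\in S_\infty$; by definition of the asymptotic cone there exist $t_k\to+\infty$ and $x_k\in S$ with $x_k/t_k\to v$. Decomposing $g$ into homogeneous parts $g=\sum_{j=0}^{d}g_j$, where $g_d=g^\infty_d$, homogeneity gives $g(x_k)/t_k^d=\sum_{j=0}^{d}t_k^{\,j-d}g_j(x_k/t_k)\to g^\infty_d(v)$, since for $j<d$ the factor $t_k^{\,j-d}\to 0$ while $g_j(x_k/t_k)\to g_j(v)$ stays bounded. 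If $g\ge m$ on $S$, then $g(x_k)/t_k^d\ge m/t_k^d\to 0$, and passing to the limit yields $g^\infty_d(v)\ge 0$. For $v=0$ the inequality is trivial, as $g^\infty_d$ is homogeneous of positive degree and hence vanishes at the origin.

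Next I would prove the contrapositive of \emph{(i)}: assume some component $f_{i_0}$ is bounded from below on $S$, and show $0\in SOL^{w}(S_{\infty},f^{\infty}_{\mathbf{d}})$, which makes the set nonempty. Since $S\neq\emptyset$ we have $0\in S_\infty$, and each $(f_i)^\infty_{d_i}$ vanishes at $0$, so $f^\infty_{\mathbf{d}}(0)=0$. Applying the lemma to $f_{i_0}$ gives $(f_{i_0})^\infty_{d_{i_0}}(x)\ge 0$ for every $x\in S_\infty$, so the $i_0$-th coordinate of $f^\infty_{\mathbf{d}}(x)$ is nonnegative; this already forces $f^\infty_{\mathbf{d}}(x)-f^\infty_{\mathbf{d}}(0)=f^\infty_{\mathbf{d}}(x)\notin -\mbox{ int }\mathbf{R}^q_+$ for all $x\in S_\infty$, i.e. $0$ is a weak efficient solution, contradicting $SOL^{w}(S_{\infty},f^{\infty}_{\mathbf{d}})=\emptyset$.

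Finally, for the contrapositive of \emph{(ii)} I would assume \emph{every} $f_i$ is bounded from below on $S$. The lemma then yields $(f_i)^\infty_{d_i}\ge 0$ on $S_\infty$ for all $i$, so for any $x\in S_\infty$ the relation $f^\infty_{\mathbf{d}}(x)\in -\mathbf{R}^q_+$ would make every coordinate simultaneously $\le 0$ and $\ge 0$, hence $f^\infty_{\mathbf{d}}(x)=0$; consequently $f^\infty_{\mathbf{d}}(x)\notin -\mathbf{R}^q_+\backslash\{0\}$ and $0\in SOL^{s}(S_{\infty},f^{\infty}_{\mathbf{d}})$, contradicting emptiness. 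The only genuinely technical point is the limit computation in the lemma, namely controlling the lower-order homogeneous parts as $t_k\to+\infty$; once that is in hand, the remainder is a direct reading of the definitions of weak and strong efficiency, with the gap between \emph{(i)} and \emph{(ii)} arising precisely because weak efficiency at $0$ needs only one coordinate bounded below, whereas strong efficiency needs all of them.
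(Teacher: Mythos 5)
Your proposal is correct and follows essentially the same route as the paper: the paper's proof of \emph{(ii)} is exactly your contraposition argument (divide $f_i(x_k)\geq r_i$ by $t_k^{d_i}$, pass to the limit to get $(f_i)^{\infty}_{d_i}\geq 0$ on $S_\infty$, and conclude $0\in SOL^{s}(S_{\infty}, f^{\infty}_{\mathbf{d}})$). The only differences are that you justify the limit explicitly via the homogeneous decomposition and prove \emph{(i)} directly with the same one-component argument, whereas the paper obtains \emph{(i)} by citing \cite[Proposition 3.4]{LDY1}.
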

\begin{proof}
(\emph{i}): The first result  follows from \cite[Proposition 3.4]{LDY1}.

(\emph{ii}): Suppose on the contrary that $f_{i}$ is bounded from below on $S$ for all $i\in \{1, \dots, q\}$. Then, there exist $r_i\in \mathbf{R}, i=1,2, \dots,q$ such that $$f_i(x)\geq r_i,$$
for any $x\in S$. Let $v\in S_\infty$ be arbitrary. Then there exist $t_k >0$ with $t_{k}\rightarrow +\infty$ and $x_{k}\in S$ such that $t^{-1}_{k}x_{k}\rightarrow v_{0}$ as $k\rightarrow +\infty$. Since $$f_i(x_k)\geq r_i,$$
for each $i=1,2,\dots,q$ and all $k$. Dividing the both sides of the above inequality by $t_k$ and then letting $k\rightarrow +\infty$, we get
$$(f_i)^\infty_{d_i}(v)\geq 0,$$
for each $i=1,2,\dots,q$. Thus, by the arbitration of $v$, we have $0\in SOL^{s}(S_{\infty}, f^{\infty}_{\mathbf{d}})$, which is a contradiction.
\end{proof}

In particular, let $\bar x\in K$ and $S=(K_{\bar x})_\infty$. Then we have the following results.

\begin{corollary}\label{weakpro}
Let $\bar x\in K$ and the nonempty index set $I\subseteq\{1,2,\dots,q\}$. If $f$ is $I$-section-bounded from below at $\bar x$, then $0\in SOL^{w}((K_{\bar x})_{\infty}, f^{\infty}_{\mathbf{d}})$. In particular, if $f$ is section-bounded from below at $\bar x$, then $0\in SOL^{s}((K_{\bar x})_{\infty}, f^{\infty}_{\mathbf{d}})$.
\end{corollary}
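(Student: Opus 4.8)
The plan is to reduce everything to the homogenization argument already carried out in the proof of Proposition~\ref{notbounded}(ii), specialized to the set $K_{\bar x}$ so that its asymptotic cone is $(K_{\bar x})_\infty$. First I would record the trivial but essential reduction: since each component $(f_i)^\infty_{d_i}$ is positively homogeneous of degree $d_i\geq 1$, we have $f^\infty_{\mathbf{d}}(0)=0$, and $0\in(K_{\bar x})_\infty$ because the asymptotic cone of a nonempty set always contains the origin, so $0$ is feasible. Consequently $0\in SOL^{w}((K_{\bar x})_\infty,f^\infty_{\mathbf{d}})$ is equivalent to the statement that no $v\in(K_{\bar x})_\infty$ satisfies $f^\infty_{\mathbf{d}}(v)\in-\mathrm{int}\,\mathbf{R}^q_+$ (i.e.\ $(f_i)^\infty_{d_i}(v)<0$ for every $i$), and $0\in SOL^{s}((K_{\bar x})_\infty,f^\infty_{\mathbf{d}})$ is equivalent to the statement that no $v\in(K_{\bar x})_\infty$ satisfies $f^\infty_{\mathbf{d}}(v)\in-\mathbf{R}^q_+\backslash\{0\}$.

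The key step is a one-component limiting estimate. Suppose $f_i$ is bounded from below on $K_{\bar x}$, say $f_i(x)\geq r_i$ for all $x\in K_{\bar x}$. Fix any $v\in(K_{\bar x})_\infty$ and choose $t_k\to+\infty$ and $x_k\in K_{\bar x}$ with $x_k/t_k\to v$. Dividing the inequality $f_i(x_k)\geq r_i$ by $t_k^{d_i}$ and letting $k\to+\infty$, the lower-order monomials of $f_i$ contribute terms of order $t_k^{|\alpha|-d_i}\to 0$, so the left-hand side converges to $(f_i)^\infty_{d_i}(v)$ while the right-hand side tends to $0$; hence $(f_i)^\infty_{d_i}(v)\geq 0$. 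This is exactly the computation in Proposition~\ref{notbounded}(ii), now read off index by index rather than for all indices simultaneously.

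Finally I would assemble the two conclusions. For the weak statement, $I$-section-boundedness supplies at least one index $i_0\in I$ (the set $I$ being nonempty) with $f_{i_0}$ bounded below on $K_{\bar x}$; the estimate gives $(f_{i_0})^\infty_{d_{i_0}}(v)\geq 0$ for every $v\in(K_{\bar x})_\infty$, so the $i_0$-th coordinate of $f^\infty_{\mathbf{d}}(v)$ is never strictly negative, whence $f^\infty_{\mathbf{d}}(v)\notin-\mathrm{int}\,\mathbf{R}^q_+$; by the reduction this yields $0\in SOL^{w}((K_{\bar x})_\infty,f^\infty_{\mathbf{d}})$. For the ``in particular'' statement, section-boundedness is precisely the case $I=\{1,\dots,q\}$, so the estimate holds for every $i$, giving $f^\infty_{\mathbf{d}}(v)\in\mathbf{R}^q_+$ for all $v\in(K_{\bar x})_\infty$; hence $f^\infty_{\mathbf{d}}(v)\notin-\mathbf{R}^q_+\backslash\{0\}$ and $0\in SOL^{s}((K_{\bar x})_\infty,f^\infty_{\mathbf{d}})$. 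I do not expect a serious obstacle here: the only points demanding care are the correct normalization by $t_k^{d_i}$ (rather than $t_k$) in the limit, and the componentwise reading of the cones $-\mathrm{int}\,\mathbf{R}^q_+$ versus $-\mathbf{R}^q_+\backslash\{0\}$, which is exactly what makes a single bounded-below component sufficient for weak efficiency while all components are required for efficiency.
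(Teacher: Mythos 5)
Your proposal is correct. The core analytic content is the same homogenization estimate that drives the paper's Proposition~\ref{notbounded}: if $f_i\geq r_i$ on $K_{\bar x}$, divide along a sequence $x_k/t_k\to v$ by $t_k^{d_i}$ to get $(f_i)^{\infty}_{d_i}(v)\geq 0$ on $(K_{\bar x})_\infty$ (and you are right that the normalization must be $t_k^{d_i}$, not $t_k$ as the paper's own proof of that proposition loosely writes). Where you differ is in the assembly. The paper's proof of the corollary is a two-step citation: first the contrapositive of Proposition~\ref{notbounded} to get \emph{nonemptiness} of $SOL^{w}$ (resp.\ $SOL^{s}$), then Proposition~\ref{necessary} to upgrade nonemptiness to $0$ being a solution --- and that second step silently relies on the structural inclusion $(K_{\bar x})_\infty\subseteq\{x\mid f^{\infty}_{\mathbf{d}}(x)\leq 0\}$ established in Section~2, plus the scaling argument inside Proposition~\ref{necessary}. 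You instead prove directly that $0$ is a (weak) Pareto solution: one nonnegative leading form $(f_{i_0})^{\infty}_{d_{i_0}}$ rules out domination in $-\mathrm{int}\,\mathbf{R}^q_+$, and nonnegativity of all leading forms rules out domination in $-\mathbf{R}^q_+\backslash\{0\}$. This bypasses Proposition~\ref{necessary} and the inclusion it requires altogether, making the argument self-contained and, in that respect, cleaner; what it gives up is the reuse of the paper's general machinery, which the paper presumably prefers because both propositions are needed elsewhere anyway.
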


\begin{proof}
	Since $f$ is $I$-section-bounded from below at $\bar x$, $f_{i_0}$ is bounded from below on $K_{\bar x}$ for any $i_0\in I$. By  Proposition \ref{notbounded} \emph{(i)}, we have $SOL^{w}((K_{\bar x})_{\infty}, f^{\infty}_{\mathbf{d}})\neq\emptyset$. Thus, by Proposition \ref{necessary} \emph{(ii)}, we know $0\in SOL^{w}((K_{\bar x})_{\infty}, f^{\infty}_{\mathbf{d}})$. In particular, if $f$ is section-bounded from below at $\bar x$, then $f_{i}$ is bounded from below on $K_{\bar x}$ for all $i\in \{1, \dots, q\}$. By Proposition \ref{notbounded} \emph{(ii)}, we have $SOL^{s}((K_{\bar x})_{\infty}, f^{\infty}_\mathbf{d})\neq\emptyset$. Thus, by Proposition \ref{necessary} \emph{(i)}, we know $0\in SOL^{s}((K_{\bar x})_{\infty}, f^{\infty}_\mathbf{d})$.
\end{proof}

\begin{remark}
Let $\bar x\in K$. When $f$ is a convex mapping on $K$ and $K$ is a convex set, we know $(K_{\bar x})_{\infty}=K_\infty\bigcap\{x\in \mathbf{R}^n\mid f(x)\leq f(\bar x)\}_\infty$. So, by Corollary \ref{weakpro}, we have the following results.
\end{remark}

\begin{corollary}\label{2410121}
Assume that $K$ is a convex set and $f$ is a convex polynomial mapping on $K$. Let $S=K_\infty\bigcap\{x\in \mathbf{R}^n\mid f(x)\leq f(\bar x)\}_\infty$ with $\bar x\in K$. If $f$ is $I$-section-bounded from below at $\bar x$, then $0\in SOL^{w}(S_{\infty}, f^{\infty}_{\mathbf{d}})$. In particularly, if $f$ is section-bounded from below at $\bar x$, then $0\in SOL^{s}(S_{\infty}, f^{\infty}_{\mathbf{d}})$.
\end{corollary}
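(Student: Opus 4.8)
The plan is to reduce the statement to Corollary~\ref{weakpro} by identifying $S_\infty$ with the asymptotic cone $(K_{\bar x})_\infty$ that appears there. First I would exploit the convexity hypotheses. Since $K$ is convex and $f$ is a convex mapping, the first inclusion relation recalled in Section~2, namely $(K_{\bar x})_\infty\subseteq K_\infty\bigcap\{x\in\mathbf{R}^n\mid f(x)\leq f(\bar x)\}_\infty$, holds with equality; this is precisely the content of the Remark immediately preceding the statement. Consequently, with $S=K_\infty\bigcap\{x\in\mathbf{R}^n\mid f(x)\leq f(\bar x)\}_\infty$ I get $S=(K_{\bar x})_\infty$.

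Next I would check that $S_\infty=(K_{\bar x})_\infty$ as well. Both $K_\infty$ and $\{x\in\mathbf{R}^n\mid f(x)\leq f(\bar x)\}_\infty$ are closed cones, so their intersection $S$ is again a closed cone. By the identity $(C_\infty)_\infty=C_\infty$ for asymptotic cones recalled in Section~2, every closed cone $C$ satisfies $C_\infty=C$; applying this to $S$ gives $S_\infty=S=(K_{\bar x})_\infty$.

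With this identification, the conclusion follows by a direct appeal to Corollary~\ref{weakpro}. If $f$ is $I$-section-bounded from below at $\bar x$, that corollary gives $0\in SOL^{w}((K_{\bar x})_\infty, f^{\infty}_{\mathbf{d}})$, and substituting $(K_{\bar x})_\infty=S_\infty$ yields $0\in SOL^{w}(S_\infty, f^{\infty}_{\mathbf{d}})$. The ``in particular'' clause is obtained the same way: if $f$ is section-bounded from below at $\bar x$, the second assertion of Corollary~\ref{weakpro} gives $0\in SOL^{s}((K_{\bar x})_\infty, f^{\infty}_{\mathbf{d}})=SOL^{s}(S_\infty, f^{\infty}_{\mathbf{d}})$.

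I expect the only substantive step to be the convex-case equality of asymptotic cones in the first paragraph; the remaining manipulations are a bookkeeping identity for closed cones together with a verbatim invocation of Corollary~\ref{weakpro}. The point worth watching is that this equality genuinely uses both the convexity of $K$ and of $f$: the general inclusion can be strict, as the example $f(x_1,x_2)=x_1^2+x_2$ given in Section~2 demonstrates, so the hypotheses of the corollary enter exactly here.
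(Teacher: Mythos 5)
Your proof is correct and takes essentially the same route as the paper: the paper's own justification is precisely the remark that convexity of $K$ and $f$ turns the inclusion $(K_{\bar x})_\infty\subseteq K_\infty\bigcap\{x\in\mathbf{R}^n\mid f(x)\leq f(\bar x)\}_\infty$ into an equality, followed by an invocation of Corollary \ref{weakpro}; your additional check that $S_\infty=S$ (cleanest seen by noting $S=(K_{\bar x})_\infty$ is itself an asymptotic cone, so $(C_\infty)_\infty=C_\infty$ applies) merely makes explicit a step the paper leaves implicit. One small inaccuracy in your closing commentary: the Section~2 example $f(x_1,x_2)=x_1^2+x_2$ with $K=\mathbf{R}^2_+$ has both $f$ and $K$ convex and is used in the paper to show strictness of the \emph{second} inclusion $K_\infty\bigcap\{x\in\mathbf{R}^n\mid f(x)\leq f(\bar x)\}_\infty\subseteq\{x\in K_\infty\mid f^\infty_{\mathbf{d}}(x)\leq 0\}$, not failure of the equality you rely on, so it does not illustrate the role of the convexity hypotheses—though this misattribution has no bearing on the validity of your proof.
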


\begin{proposition}\label{2410234}
	Let the nonempty set $S\subseteq \mathbf{R^n}$ and the nonempty index set $I\subseteq\{1,2,\dots,q\}$ denoted by $I=\{s_1, s_2,\dots, s_p\}$. Assumed that $f_I=(f_{s_1},f_{s_2},\dots, f_{s_p})$ is bounded from below on $S$. Then there exists  $\lambda=(\lambda_1,\lambda_2,\dots,\lambda_q)\in \mathbf{R}^q_+\setminus\{0\}$ such that $0\in SOL(S_{\infty}, \{\sum_{i=1}^{q}\lambda_i f_i\}^{\infty}_d)$.
\end{proposition}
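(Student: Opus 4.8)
The plan is to reduce the statement to a single nonnegativity property of a leading term, in direct parallel with the proof of Proposition \ref{notbounded}\emph{(ii)}. The crucial observation is that the conclusion only asks for the \emph{existence} of one vector $\lambda \in \mathbf{R}^q_+ \setminus \{0\}$, so rather than distributing weight across the whole set $I$ I would simply take $\lambda$ to be a standard basis vector. Concretely, fix any $i_0 \in I$ and set $\lambda = e_{i_0}$, i.e. $\lambda_{i_0}=1$ and $\lambda_i = 0$ for $i \neq i_0$. Then $\sum_{i=1}^{q}\lambda_i f_i = f_{i_0}$, its degree is $d = d_{i_0}$, and the associated recession polynomial is $\{\sum_{i=1}^{q}\lambda_i f_i\}^{\infty}_d = (f_{i_0})^{\infty}_{d_{i_0}}$, a homogeneous polynomial of degree $d_{i_0}\geq 1$. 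It therefore suffices to prove that $0 \in SOL(S_\infty, (f_{i_0})^{\infty}_{d_{i_0}})$.

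The heart of the argument is to verify that $(f_{i_0})^{\infty}_{d_{i_0}}(v) \geq 0$ for every $v \in S_\infty$. Since $f_I$ is bounded from below on $S$ and $i_0 \in I$, the component $f_{i_0}$ is bounded from below on $S$, so there is $r \in \mathbf{R}$ with $f_{i_0}(x) \geq r$ for all $x \in S$. Given $v \in S_\infty$, I would choose $t_k \to +\infty$ and $x_k \in S$ with $x_k/t_k \to v$. Expanding $f_{i_0}$ into its homogeneous components, one has $\frac{f_{i_0}(x_k)}{t_k^{d_{i_0}}} = \frac{f_{i_0}(t_k(x_k/t_k))}{t_k^{d_{i_0}}} \to (f_{i_0})^{\infty}_{d_{i_0}}(v)$, because the terms of degree below $d_{i_0}$ carry strictly negative powers of $t_k$ and vanish in the limit while $x_k/t_k$ stays bounded. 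Combining this with $\frac{f_{i_0}(x_k)}{t_k^{d_{i_0}}} \geq \frac{r}{t_k^{d_{i_0}}} \to 0$ gives $(f_{i_0})^{\infty}_{d_{i_0}}(v) \geq 0$. This is exactly the limiting computation already carried out in Proposition \ref{notbounded}\emph{(ii)}, now specialized to the single scalar polynomial $f_{i_0}$.

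To finish, I would record that $0 \in S_\infty$ (the asymptotic cone of any nonempty set contains the origin, as seen by fixing $x_0 \in S$ and letting $x_0/t_k \to 0$) and that $(f_{i_0})^{\infty}_{d_{i_0}}(0) = 0$ by homogeneity of positive degree. Hence $(f_{i_0})^{\infty}_{d_{i_0}}(0) = 0 \leq (f_{i_0})^{\infty}_{d_{i_0}}(v)$ for all $v \in S_\infty$, so the origin is a global minimizer of $(f_{i_0})^{\infty}_{d_{i_0}}$ over $S_\infty$, that is, $0 \in SOL(S_\infty, \{\sum_{i=1}^{q}\lambda_i f_i\}^{\infty}_d)$, which is the assertion.

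The step I expect to demand the most care is the choice of $\lambda$, not the limit itself. It is tempting to spread $\lambda$ over the entire support $I$ (for instance to force the support of $\lambda$ to equal $I$, as in the relative $I$-$\mathbf{R}^q_+$-zero-regularity condition), but then the recession polynomial of $\sum_{i\in I}\lambda_i f_i$ need \emph{not} coincide with $\sum_{i\in I}\lambda_i (f_i)^{\infty}_{d_i}$: the $f_i$ may have different degrees, and cancellation among the top-degree parts can drop the degree and destroy the nonnegativity one wants. Restricting to a single index $i_0 \in I$ circumvents this degree-mixing and cancellation entirely, at the harmless cost of invoking only one component of the hypothesis, which is legitimate since only existence of some $\lambda$ is required. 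If one does insist on full support, the same argument still closes provided the top-degree homogeneous parts of the maximal-degree components of $f_I$ do not cancel, in which case the recession polynomial is a nonnegative combination of nonnegative leading terms and the origin again minimizes it over $S_\infty$.
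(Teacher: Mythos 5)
Your proof is correct, and it rests on the same basic mechanism as the paper's own proof: fix a specific $\lambda$ supported in $I$, use boundedness from below along a sequence $x_k/t_k \to v$ to show that the recession polynomial of $f_\lambda$ is nonnegative on $S_\infty$, and then conclude that $0$ (which lies in $S_\infty$ and is a zero of the homogeneous leading term) is a minimizer. The genuine difference is the choice of $\lambda$. The paper takes $\lambda_j=1$ for all $j\in I$ and $\lambda_i=0$ otherwise, and divides the inequality $\sum_{i\in I} f_i(x_k)\geq \sum_{i\in I} r_i$ by $t_k^{d}$ with $d=\max_{j\in I}\deg f_j$; you instead take $\lambda=e_{i_0}$ for a single $i_0\in I$. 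Your variant is the more robust one: the caveat you raise in your closing paragraph --- that with full support on $I$ the top-degree parts of the maximal-degree components may cancel, so that $\deg\sum_{i\in I} f_i<\max_{j\in I}\deg f_j$ and the computed limit is then not the recession polynomial $\{\sum_{i\in I} f_i\}^{\infty}_{d}$ in the proposition's own convention ($d=\deg f_\lambda$) --- is precisely the point that the paper's write-up passes over in silence. To be fair, the paper's choice still yields a true statement even under cancellation: dividing by $t_k^{d'}$ with $d'$ the true degree of the sum gives nonnegativity of the genuine recession polynomial (and if the sum degenerates to a constant, the conclusion is trivial), so cancellation does not falsify the claim, it only invalidates the particular power of $t_k$ used in the paper's display. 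Your single-index choice removes the issue at its source, uses only one component of the hypothesis, and is fully legitimate since the proposition asks only for existence of some $\lambda\in \mathbf{R}^q_+\setminus\{0\}$.
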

\begin{proof}
Since $f_I$ is bounded from below on $S$, there exists $r_i\in \mathbf{R}$ such that $r_{i}\leq f_{q_i}(x)$ for any $i\in I$ and $x\in S$. Let $v\in S_{\infty}$ be arbitrary. Then there exist $t_k>0$ with $t_{k}\rightarrow +\infty$ as $k\to +\infty$ and $x_{k}\in S$ such that $t^{-1}_{k}x_{k}\rightarrow v_{0}$ as $k\rightarrow +\infty$. Let $\lambda=(\lambda_1,\lambda_2,\dots,\lambda_q)$ with $\lambda_i=0, i\in \{1,2,\dots,q\}\setminus I$ and $\lambda_j=1, j\in I$. Since $f_{i}(x_k)\geq r_{i}$ for each $i\in I$ and all $k$, we have $\sum_{i=1}^{q}\lambda_i r_{i}\leq \sum_{i=1}^{q}\lambda_i f_{i}(x_k)$. Dividing the both sides of the previous inequality by $(t_k)^d$, where $d=\max_{j\in I}\{deg f_j\}$ and letting $k\rightarrow +\infty$, we get $0\leq\{\sum_{i=1}^{q}\lambda_i f_i\}^{\infty}_d(v)$. It follows from $\{\sum_{i=1}^{q}\lambda_i f_{i}\}^{\infty}_d(0)=0$ and the arbitrariness of $v\in S_{\infty}$ that $0\in SOL(S_{\infty}, \{\sum_{i=1}^{q}\lambda_i f_i\}^{\infty}_d)$.
\end{proof}

When $I=\{1,2,\dots,q\}$, by the same with the proof of Proposition \ref{2410234}, we can easy to get the following result.

\begin{proposition}\label{2410141}
	Let the nonempty set $S\subseteq \mathbf{R^n}$. Assumed that $f$ is bounded from below on $S$. Then $0\in SOL(S_{\infty}, \{\sum_{i=1}^{q}\lambda_i f_i\}^{\infty}_d)$ for any  $\lambda=(\lambda_1,\lambda_2,\dots,\lambda_q)\in \mathbf{R}^q_+\setminus\{0\}$.
\end{proposition}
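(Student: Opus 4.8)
The plan is to reduce the vector statement to a scalar one and then re-run the limiting argument already used in Proposition \ref{2410234}. Fix an arbitrary $\lambda=(\lambda_1,\dots,\lambda_q)\in\mathbf{R}^q_+\setminus\{0\}$ and set $g:=\sum_{i=1}^{q}\lambda_i f_i$, a scalar polynomial with $d:=\deg g$. The decisive observation — and the only place where the present hypothesis is strictly stronger than that of Proposition \ref{2410234} — is that since $f$, and hence every component $f_i$, is bounded from below on $S$, say $f_i(x)\geq r_i$ for all $x\in S$, the nonnegativity of each $\lambda_i$ forces
$$g(x)=\sum_{i=1}^{q}\lambda_i f_i(x)\geq \sum_{i=1}^{q}\lambda_i r_i \qquad \text{on } S$$
for \emph{every} admissible $\lambda$, not merely for one specific choice. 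Thus $g$ is bounded from below on $S$.

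Next I would verify that $g^\infty_d\geq 0$ on $S_\infty$. Take an arbitrary $v\in S_\infty$; by the definition of the asymptotic cone there exist $t_k\to+\infty$ and $x_k\in S$ with $t_k^{-1}x_k\to v$. Writing $x_k=t_k y_k$ with $y_k\to v$ and dividing the inequality $g(x_k)\geq\sum_{i=1}^{q}\lambda_i r_i$ by $t_k^{d}$, the left-hand side converges to the leading homogeneous form $g^\infty_d(v)$ (the lower-order terms of $g$ contribute $O(t_k^{-1})$ under this scaling), while the right-hand side tends to $0$ once $d\geq 1$. Hence $g^\infty_d(v)\geq 0$, and since $v$ was arbitrary this inequality holds on all of $S_\infty$.

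Finally, because $g^\infty_d$ is homogeneous of degree $d\geq 1$ we have $g^\infty_d(0)=0$, so the point $0$ attains the value $0\leq g^\infty_d(v)$ for every $v\in S_\infty$; that is, $0\in SOL(S_\infty,g^\infty_d)=SOL(S_\infty,\{\sum_{i=1}^{q}\lambda_i f_i\}^\infty_d)$. As $\lambda\in\mathbf{R}^q_+\setminus\{0\}$ was arbitrary, the conclusion holds for every such $\lambda$.

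I do not expect any serious obstacle here, since the argument mirrors that of Proposition \ref{2410234} almost verbatim; the substantive work lies entirely in noticing that global lower boundedness of $f$ upgrades the existential quantifier over $\lambda$ to a universal one. The only point meriting care is the degenerate case in which leading terms cancel and $d=\deg g$ drops, possibly to $0$: when $d=0$ the polynomial $g$ is constant and the claim $0\in SOL(S_\infty,g^\infty_d)$ is immediate, so the limiting step is needed only when $d\geq 1$, which is precisely where it is valid.
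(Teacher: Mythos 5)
Your proof is correct and takes essentially the same approach as the paper, whose proof simply says to repeat the argument of Proposition~\ref{2410234} with $I=\{1,2,\dots,q\}$: since every $f_i$ is bounded below on $S$ and each $\lambda_i\geq 0$, the scalarization $g=\sum_{i=1}^{q}\lambda_i f_i$ is bounded below on $S$ for \emph{every} $\lambda\in\mathbf{R}^q_+\setminus\{0\}$, and dividing $g(x_k)\geq\sum_{i=1}^{q}\lambda_i r_i$ by $t_k^{d}$ and passing to the limit gives $g^{\infty}_{d}\geq 0$ on $S_\infty$, hence $0\in SOL(S_\infty, g^{\infty}_{d})$. Your explicit handling of the degenerate case where leading terms cancel and $d=\deg g$ drops is in fact slightly more careful than the paper's own treatment, which tacitly divides by the correct power without discussing this possibility.
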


In particular, let $\bar x\in K$ and $S=K_{\bar x}$. Then, by Propositions \ref{2410234} and  \ref{2410141}, we can obtain the following result.

\begin{corollary}\label{2410235}
	Let $\bar x\in K$ and the nonempty index set $I\subseteq\{1,2,\dots,q\}$. Assumed that $f$ is $I$-section-bounded from below at $\bar x$. Then there exists  $\lambda=(\lambda_1,\lambda_2,\dots,\lambda_q)\in \mathbf{R}^q_+\setminus\{0\}$ such that $0\in SOL((K_{\bar x})_{\infty}, \{\sum_{i=1}^{q}\lambda_i f_i\}^{\infty}_d)$. In particular, if $f$ is section-bounded from below at $\bar x$, then $0\in SOL((K_{\bar x})_{\infty}, \{\sum_{i=1}^{q}\lambda_i f_i\}^{\infty}_d)$ for any  $\lambda=(\lambda_1,\lambda_2,\dots,\lambda_q)\in \mathbf{R}^q_+\setminus\{0\}$.
\end{corollary}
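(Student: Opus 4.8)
The plan is to read this corollary as the special case $S=K_{\bar x}$ of Propositions \ref{2410234} and \ref{2410141}, so that the entire content reduces to checking that the section-boundedness hypotheses transfer correctly under this choice of $S$. First I would record that $S=K_{\bar x}$ is an admissible set: since $f_i(\bar x)\leq f_i(\bar x)$ for every $i$, we have $\bar x\in K_{\bar x}$, so $K_{\bar x}$ is a nonempty subset of $\mathbf{R}^n$, which is all that Propositions \ref{2410234} and \ref{2410141} require of $S$ (no closedness, convexity, or semi-algebraicity is needed there).

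For the first assertion, the key step is the translation of the hypothesis. By Definition \ref{exchangesec}, $f$ being $I$-section-bounded from below at $\bar x$ means precisely that $f_i$ is bounded from below on $K_{\bar x}$ for every $i\in I$; writing $I=\{s_1,\dots,s_p\}$, this is exactly the statement that the subvector $f_I=(f_{s_1},\dots,f_{s_p})$ is bounded from below on $S=K_{\bar x}$. This is verbatim the hypothesis of Proposition \ref{2410234}, so applying that proposition with $S=K_{\bar x}$ produces a $\lambda=(\lambda_1,\dots,\lambda_q)\in\mathbf{R}^q_+\setminus\{0\}$ with $0\in SOL((K_{\bar x})_\infty,\{\sum_{i=1}^{q}\lambda_i f_i\}^{\infty}_d)$, since $S_\infty=(K_{\bar x})_\infty$ here.

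For the ``in particular'' part I would use the full index set. If $f$ is section-bounded from below at $\bar x$, then by Remark \ref{20250227a} (the case $I=\{1,2,\dots,q\}$ in Definition \ref{exchangesec}) every component $f_i$ is bounded from below on $K_{\bar x}$, i.e. $f$ itself is bounded from below on $S=K_{\bar x}$. Invoking Proposition \ref{2410141} with this $S$ then gives $0\in SOL((K_{\bar x})_\infty,\{\sum_{i=1}^{q}\lambda_i f_i\}^{\infty}_d)$ for \emph{every} $\lambda\in\mathbf{R}^q_+\setminus\{0\}$, as claimed.

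There is essentially no analytic obstacle: all of the substantive work (dividing a lower bound $f_i(x_k)\geq r_i$ along a sequence $x_k/t_k\to v$ by a suitable power of $t_k$ and passing to the limit to obtain $\{\sum_{i=1}^{q}\lambda_i f_i\}^{\infty}_d(v)\geq 0$) is already carried out inside the two cited propositions. The only point that demands care is the bookkeeping that ``$I$-section-bounded from below at $\bar x$'' unwinds to exactly ``$f_I$ bounded from below on $K_{\bar x}$'', together with the matching of the asymptotic cone $S_\infty=(K_{\bar x})_\infty$ appearing in the solution set; once these identifications are made, the corollary follows immediately.
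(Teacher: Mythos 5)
Your proposal is correct and follows exactly the paper's own route: the paper obtains this corollary by specializing Propositions \ref{2410234} and \ref{2410141} to $S=K_{\bar x}$, which is precisely what you do, with the additional (sound) bookkeeping that $K_{\bar x}\ni\bar x$ is nonempty, that $I$-section-boundedness at $\bar x$ unwinds to $f_I$ being bounded from below on $K_{\bar x}$, and that $S_\infty=(K_{\bar x})_\infty$. No gaps; the argument matches the intended proof.
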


\begin{remark}
Let $\bar x\in K$. When $f$ is a convex mapping on $K$ and $K$ is a convex set, by Corollary \ref{2410235}, we have the following result.
\end{remark}

\begin{corollary}\label{2410236}
Assume that $K$ is a nonempty closed convex set and $f$ is a convex polynomial mapping on $K$. Let the nonempty index set $I\subseteq\{1,2,\dots,q\}$, $\bar x\in K$ and $S=K_\infty\bigcap\{x\in \mathbf{R}^n\mid f(x)\leq f(\bar x)\}_\infty$. If $f$ is $I$-section-bounded from below at $\bar x$, then there exists  $\lambda=(\lambda_1,\lambda_2,\dots,\lambda_q)\in \mathbf{R}^q_+\setminus\{0\}$ such that $0\in SOL(S_{\infty}, \{\sum_{i=1}^{q}\lambda_i f_i\}^{\infty}_d)$. In particular, if $f$ is section-bounded from below at $\bar x$, then $0\in SOL(S_{\infty}, \{\sum_{i=1}^{q}\lambda_i f_i\}^{\infty}_d)$ for any  $\lambda=(\lambda_1,\lambda_2,\dots,\lambda_q)\in \mathbf{R}^q_+\setminus\{0\}$.
\end{corollary}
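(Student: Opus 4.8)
The plan is to deduce this corollary directly from Corollary \ref{2410235} by showing that, under the convexity hypotheses, the set $S$ chosen here satisfies $S_\infty=(K_{\bar x})_\infty$, so that the conclusion of Corollary \ref{2410235} transfers verbatim. The entire argument therefore rests on the single identity
$$(K_{\bar x})_\infty=K_\infty\bigcap\{x\in\mathbf{R}^n\mid f(x)\leq f(\bar x)\}_\infty,$$
which is exactly the equality version of the ``first inclusion relation'' recorded in the preliminary discussion of Section 2 for convex data.

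First I would rewrite $K_{\bar x}=\{x\in K: f_i(x)\leq f_i(\bar x),\ i=1,\dots,q\}$ as the intersection $K\bigcap\{x\in\mathbf{R}^n\mid f(x)\leq f(\bar x)\}$, where the vector inequality is understood componentwise. Since $K$ is closed and convex and each $f_i$ is convex, the set $\{x\in\mathbf{R}^n\mid f(x)\leq f(\bar x)\}=\bigcap_{i=1}^q\{x\mid f_i(x)\leq f_i(\bar x)\}$ is closed and convex, and the intersection is nonempty because it contains $\bar x$. I would then invoke the asymptotic-cone formula for intersections of closed convex sets (\cite[Proposition 3.9]{RC}, as already used in Section 2) to obtain $(K_{\bar x})_\infty=K_\infty\bigcap\{x\in\mathbf{R}^n\mid f(x)\leq f(\bar x)\}_\infty$, and by definition the right-hand side is precisely $S$.

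Next I would observe that $S$ is a closed cone, being the intersection of the closed cone $K_\infty$ with the closed cone $\{x\in\mathbf{R}^n\mid f(x)\leq f(\bar x)\}_\infty$; hence $S_\infty=S$ (a closed cone equals its own asymptotic cone, just as $(K_\infty)_\infty=K_\infty$ in Section 2). Combining this with the previous step gives $S_\infty=S=(K_{\bar x})_\infty$. With this identity in hand, the conclusion is immediate: applying Corollary \ref{2410235} to $\bar x$ and the index set $I$ produces some $\lambda\in\mathbf{R}^q_+\setminus\{0\}$ with $0\in SOL((K_{\bar x})_\infty,\{\sum_{i=1}^q\lambda_i f_i\}^\infty_d)$, and substituting $(K_{\bar x})_\infty=S_\infty$ yields $0\in SOL(S_\infty,\{\sum_{i=1}^q\lambda_i f_i\}^\infty_d)$. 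The ``in particular'' clause follows in the same way from the corresponding clause of Corollary \ref{2410235}.

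The only delicate point—and essentially the only real content—is the equality $(K_{\bar x})_\infty=K_\infty\bigcap\{x\mid f(x)\leq f(\bar x)\}_\infty$, where convexity is genuinely needed: for general data only the inclusion $\subseteq$ holds, as the example $f(x)=x_1^2+x_2$ on the first quadrant in Section 2 already demonstrates. Once the intersection formula for asymptotic cones is justified via the convexity of $K$ and $f$, everything else is a purely formal transfer from Corollary \ref{2410235}, so I expect no further obstacle.
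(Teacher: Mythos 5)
Your proposal is correct and follows essentially the same route as the paper: the paper also obtains this corollary by noting that, under the convexity of $K$ and $f$, the identity $(K_{\bar x})_\infty=K_\infty\bigcap\{x\in\mathbf{R}^n\mid f(x)\leq f(\bar x)\}_\infty=S=S_\infty$ holds (recorded in Section 2 via \cite[Proposition 3.9]{RC}), and then transfers the conclusion of Corollary \ref{2410235} verbatim. Your write-up merely makes explicit the two details the paper leaves implicit (the convex intersection formula for asymptotic cones and the fact that $S$, being a closed cone, equals $S_\infty$), so there is nothing further to add.
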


Now, the following results show that the relative regularity conditions of $f$ is closely related to the relative regularity of $f_i, i\in \{1,2,\dots,q\}$. It plays an important role in investigating the existence of efficient solutions for polynomial vector optimization problems.

\begin{theorem}\label{2410122}Let the nonempty closed set $S\subseteq \mathbf{R^n}$. The following results are equivalent:
\begin{itemize}
	\item[(i)] $SOL^{w}(S_{\infty}, f^{\infty}_\mathbf{d})=\{0\}$;
    \item[(ii)] $SOL(S_{\infty}, (f_{i})^{\infty}_{d_i})=\{0\}$ for all $i\in \{1, \dots, q\}$;
    \item[(iii)] $SOL(S_{\infty}, \{\sum_{i=1}^{q}\lambda_i f_i\}^{\infty}_d)=\{0\}$ for any $\lambda=(\lambda_1,\lambda_2,\dots,\lambda_q)\in \mathbf{R}^q_+\setminus\{0\}$.
\end{itemize}
   Moreover, if $S\subseteq \mathbf{R^n}$ satisfies with $S_\infty\subseteq \{x\in \mathbf{R}^n\mid f^\infty(x)\leq 0\}$, then the conclusions (i)-(iii) are equivalent with the following result:
	\begin{itemize}
	\item[(iv)] $SOL^{s}(S_{\infty}, f^{\infty}_\mathbf{d})=\{0\}$.
    \end{itemize}
 In addition, if the one of the conditions (i)-(iv) holds, then there exists $\lambda=(\lambda_1,\lambda_2,\dots,\lambda_q)\in \mathbf{R}^q_+\setminus\{0\}$ such that $SOL(S_{\infty}, \{\sum_{i=1}^{q}\lambda_i f_i\}^{\infty}_d)=\{0\}$.
\end{theorem}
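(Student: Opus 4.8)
The plan is to write $C:=S_\infty$ (a closed cone) and $g_i:=(f_i)^\infty_{d_i}$, so that $f^\infty_{\mathbf d}=(g_1,\dots,g_q)$ with $g_i(tv)=t^{d_i}g_i(v)$ for $t>0$. First I would record three elementary consequences of homogeneity on a cone. (a) Both $SOL^{w}(C,f^\infty_{\mathbf d})$ and $SOL^{s}(C,f^\infty_{\mathbf d})$ are cones: if $x^\ast$ lies in one of them and $s>0$, then rescaling any competitor $v$ to $v/s$ and using $g_j(v)=s^{d_j}g_j(v/s)$ transports a domination of $sx^\ast$ into one of $x^\ast$; hence each set is bounded if and only if it equals $\{0\}$ or $\emptyset$ (this is the content recalled in Remark \ref{compa}). (b) For the scalar homogeneous $g_i$ on $C$ one has $SOL(C,g_i)=\emptyset$ exactly when $g_i$ is negative somewhere on $C$ (then $g_i(tv)\to-\infty$), and otherwise $SOL(C,g_i)=\{v\in C:g_i(v)=0\}\ni 0$; consequently $SOL(C,g_i)=\{0\}$ if and only if $g_i>0$ on $C\setminus\{0\}$. (c) Every scalar minimizer is weakly efficient, i.e.\ $SOL(C,g_i)\subseteq SOL^{w}(C,f^\infty_{\mathbf d})$, since a minimizer of the single component $g_i$ can never be strictly improved in all components at once.

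The equivalences among (i)--(iii) I would arrange as $(iii)\Rightarrow(ii)$, $(ii)\Rightarrow(i)$, $(ii)\Rightarrow(iii)$, leaving one hard direction. For $(iii)\Rightarrow(ii)$ I specialize $\lambda=e_i$, for which $\{\sum_k\lambda_k f_k\}^\infty_d=g_i$. For $(ii)\Rightarrow(i)$: under (ii) each $g_i\ge 0$ on $C$, so no $v\in C$ has all components negative and $0\in SOL^{w}$; and any $x^\ast\in C\setminus\{0\}$ has $g_i(x^\ast)>0$ for all $i$, so the competitor $v=0$ strictly dominates it, whence $SOL^{w}=\{0\}$. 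For $(ii)\Rightarrow(iii)$, fix $\lambda\in\mathbf R^q_+\setminus\{0\}$, put $I=\{i:\lambda_i\neq0\}$ and $D=\max_{i\in I}d_i$; the degree-$D$ part of $\sum_i\lambda_i f_i$ is $\sum_{i\in I,\,d_i=D}\lambda_i g_i$, a sum of nonnegative terms at least one of which is strictly positive on $C\setminus\{0\}$, hence positive on $C\setminus\{0\}$ and in particular not identically zero. Thus no top-degree cancellation occurs, $h_\lambda=\{\sum_i\lambda_i f_i\}^\infty_d=\sum_{i\in I,\,d_i=D}\lambda_i g_i$ is positive on $C\setminus\{0\}$, and $SOL(C,h_\lambda)=\{0\}$ by (b).

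The main obstacle is $(i)\Rightarrow(ii)$. By (c) and (i) we get $SOL(C,g_i)\subseteq SOL^{w}(C,f^\infty_{\mathbf d})=\{0\}$, so each $SOL(C,g_i)$ is $\{0\}$ or $\emptyset$; the entire difficulty is to exclude the empty case, i.e.\ to show that (i) forces $g_i\ge 0$ on $C$ for every $i$. I would argue the contrapositive: assuming $0\in SOL^{w}$ (no $v\in C$ with all $g_j(v)<0$) and that some $g_{i_0}$ is negative at a point of $C$, I must produce a \emph{nonzero} weakly efficient point, contradicting $SOL^{w}=\{0\}$. This is where the geometry must be used: on the compact section $C\cap S^{n-1}$ homogeneity lets one rescale competitors, and decreasing the already-negative component $g_{i_0}$ along the cone forces moving outward in norm, which by $0\in SOL^{w}$ must raise some other component; hence a direction at which $g_{i_0}$ attains its negative minimum over $C\cap S^{n-1}$ cannot be strictly improved in all components simultaneously and so lies in $SOL^{w}\setminus\{0\}$. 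This compactness-plus-sign argument (equivalently, the nonemptiness half of Proposition \ref{notbounded} together with Remark \ref{compa} applied to the homogeneous polynomial $g=f^\infty_{\mathbf d}$ on the cone $C$, for which $C_\infty=C$ and $g^\infty=g$) is the crux and the only delicate point. Once $g_i\ge0$ is secured, $SOL(C,g_i)\neq\emptyset$, hence $=\{0\}$, which is (ii); the final ``in addition'' clause then follows at once from $(ii)\Rightarrow(iii)$, any $\lambda\in\mathbf R^q_+\setminus\{0\}$ serving.

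For the equivalence with (iv) under $S_\infty\subseteq\{f^\infty_{\mathbf d}\le 0\}$, that is $g_i\le 0$ on all of $C$, I would combine $SOL^{s}\subseteq SOL^{w}$ with Proposition \ref{necessary}. From $(i)\Leftrightarrow(ii)$ the regular case gives $g_i>0$ on $C\setminus\{0\}$, which against $g_i\le 0$ on $C$ leaves only $C=\{0\}$; then $SOL^{s}=\{0\}$, which is (iv). Conversely, if $SOL^{s}=\{0\}$ then $0\in SOL^{s}$, and since every $v\in C$ already satisfies $g_i(v)\le0$, membership $0\in SOL^{s}$ forces $f^\infty_{\mathbf d}\equiv 0$ on $C$, whence $SOL^{s}=C$ and therefore $C=\{0\}$, returning $SOL^{w}=\{0\}$, i.e.\ (i); Proposition \ref{necessary}(i)--(ii) is exactly what lets one read off emptiness of $SOL^{s}$ (resp.\ $SOL^{w}$) from the single test $0\notin SOL^{s}$ (resp.\ $0\notin SOL^{w}$) under this hypothesis. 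I expect the write-up to be routine apart from the crux in $(i)\Rightarrow(ii)$ flagged above.
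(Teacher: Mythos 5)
Most of your proposal is sound: the specialization $\lambda=e_i$ for (iii)$\Rightarrow$(ii), the argument for (ii)$\Rightarrow$(i), the no-cancellation argument for (ii)$\Rightarrow$(iii) (with the caveat that the inference ``positive on $S_\infty\setminus\{0\}$, hence not identically zero'' breaks down when $S_\infty=\{0\}$, though (iii) is then trivial), and your direct treatment of (iv) are all correct; in fact your route to (iv)$\Leftrightarrow$(i) (that $0\in SOL^{s}$ together with $f^{\infty}_{\mathbf{d}}\leq 0$ on $S_\infty$ forces $f^{\infty}_{\mathbf{d}}\equiv 0$ on $S_\infty$, hence $SOL^{s}(S_\infty,f^{\infty}_{\mathbf{d}})=S_\infty=\{0\}$) is cleaner than the paper's, which invokes \cite[Theorem 3.6(ii)]{LDY1}. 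The genuine gap is exactly at the step you flag as the crux, (i)$\Rightarrow$(ii), and your argument there does not close. Write $C=S_\infty$, $g_j=(f_j)^{\infty}_{d_j}$, assume $SOL^{w}(C,f^{\infty}_{\mathbf{d}})=\{0\}$ and $g_{i_0}<0$ somewhere on $C$, and let $x^{*}$ minimize $g_{i_0}$ over $C\cap\{\|x\|=1\}$ with value $m<0$. If $v$ strictly dominates $x^{*}$, then $g_{i_0}(v)<m<0$ and indeed $\|v\|>1$; but then $g_{i_0}(v/\|v\|)=g_{i_0}(v)/\|v\|^{d_{i_0}}$ is \emph{larger} than $g_{i_0}(v)$ and may perfectly well be $\geq m$, so minimality of $x^{*}$ on the sphere is not contradicted. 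Likewise, the consequence of $0\in SOL^{w}$ that some component of $v$ satisfies $g_{j}(v)\geq 0$ does not preclude $g_{j}(v)<g_{j}(x^{*})$, since nothing forces $g_{j}(x^{*})\leq 0$: $x^{*}$ minimizes $g_{i_0}$ only, and its other components can be strictly positive. So ``$x^{*}\in SOL^{w}\setminus\{0\}$'' is a non sequitur. Your fallback does not repair this either: Proposition \ref{notbounded} and Remark \ref{compa} convert boundedness from below into \emph{nonemptiness} of $SOL^{w}$, which is the wrong direction here -- $SOL^{w}=\{0\}$ is already nonempty, and what must be produced is nonnegativity of every $g_i$ on $C$.

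For comparison, the paper does not prove this implication at all: it disposes of (i)$\Leftrightarrow$(ii) by citing \cite[Theorem 3.6(i)]{LDY1}, so the hard content is outsourced. If you want a self-contained proof, your normalization idea can be repaired by working on a level set of $g_{i_0}$ rather than the sphere: set $P=\{x\in C : g_{i_0}(x)=-1\}\neq\emptyset$ and minimize another component $g_{j}$ over $P$. If the infimum is attained at $x^{*}$, rescaling any strict dominator $v$ (which has $g_{i_0}(v)<-1$, hence $g_j(v)\geq 0$ when $q=2$ by $0\in SOL^{w}$) back into $P$ strictly decreases $g_{j}$ below the infimum, a contradiction; if it is not attained, a normalized minimizing sequence produces $u^{*}\in C$ with $\|u^{*}\|=1$ and $g_{i_0}(u^{*})=g_{j}(u^{*})=0$, and any nonzero point whose components are all $\leq 0$ is automatically weakly efficient (its strict-domination set is contained in that of $0$, which is empty), again a contradiction. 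For $q>2$ this needs an additional induction on $q$ (passing to the closed subcone $\{g_{i_0}\leq 0\}\cap C$ with the remaining $q-1$ components), so the complete argument is considerably longer than your sketch. As written, your proposal proves everything except the one implication carrying the real content of the theorem.
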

\begin{proof}
"\emph{(i)}$\Leftrightarrow$ \emph{(ii)}": By \emph{(i)} of \cite[Theorem 3.6]{LDY1}, this result is directly.

"\emph{(ii)}$\Leftrightarrow$ \emph{(iii)}": Assume that the conclusion \emph{(iii)} holds. Let $\lambda^{i_0}=(0,0,\dots,1,0\dots,0)$ with $\lambda_{i_0}=1$ and $\lambda_j=0$, $j\in\{1,2,\dots,i_0-1,i_0+1,\dots,q\}$, we have $SOL(S_{\infty}, (f_{i})^{\infty}_{d_i})=\{0\}$. Thus, the conclusion \emph{(ii)} holds. Conversely, if $SOL(S_{\infty}, (f_{i})^{\infty}_{d_i})=\{0\}$ for all $i\in \{1, \dots, q\}$, then the conclusion \emph{(iii)} holds, directly.

Moreover, if $S\subseteq \mathbf{R^n}$ satisfies with $S_\infty\subseteq \{x\in \mathbf{R}^n\mid f^\infty(x)\leq 0\}$, then we shall prove "\emph{(iv)}$\Leftrightarrow$ \emph{(ii)}". Assume that the conclusion \emph{(iv)} holds. By \emph{(ii)} of \cite[Theorem 3.6]{LDY1}, we have that for any $i\in\{1,2,\dots,q\}$, $SOL(S_{\infty}, (f_{i})^{\infty}_{d_i})\neq\emptyset$. Thus, $(f_{i})^{\infty}_{d_i}(v)\geq 0$ for any $v\in S_{\infty}$ and $i\in\{1,2,\dots,q\}$.  It follows from $S_\infty\subseteq \{x\in \mathbf{R}^n\mid f^\infty(x)\leq 0\}$  that $(f_{i})^{\infty}_{d_i}(v)=0$ for any $v\in S_{\infty}$ and $i\in\{1,2,\dots,q\}$. So $S_\infty=\{0\}$, since if there exists $v_0\in S_\infty\setminus\{0\}$, then $v_0\in SOL^{s}(S_{\infty}, f^{\infty}_\mathbf{d})\setminus\{0\}$ by $0\in SOL^{s}(S_{\infty}, f^{\infty}_\mathbf{d})$ and $(f_{i})^{\infty}_{d_i}(v_0)=0$ for all $i\in\{1,2,\dots,q\}$. Thus, the result \emph{(ii)} holds. Conversely, if the conclusion \emph{(ii)} holds, then we obtain $SOL^{s}(S_{\infty}, f^{\infty}_\mathbf{d})\neq\emptyset$. Thus, by the conclusion \emph{(i)} and $SOL^{s}(S_{\infty}, f^{\infty}_\mathbf{d})\subseteq SOL^{w}(S_{\infty}, f^{\infty}_\mathbf{d})$, we know the conclusion \emph{(iv)} holds.

In addition, If the one of the conclusions \emph{(i)}-\emph{(iv)} holds, then there exists $\lambda=(\lambda_1,\lambda_2,\dots,\lambda_q)\in \mathbf{R}^q_+\setminus\{0\}$ such that $SOL(S_{\infty}, \{\sum_{i=1}^{q}\lambda_i f_i\}^{\infty}_d)=\{0\}$.
\end{proof}

\begin{remark}
Assumed that the condition $S_\infty\subseteq \{x\in \mathbf{R}^n\mid f^\infty(x)\leq 0\}$ is removed. By the above proof in Theorem \ref{2410122}, we know that if the one of the conclusions (i)-(iii) in Theorem \ref{2410122} holds, then the conclusion (iv) is also true. However, the following example shows that its inverse may not hold in general without $S_\infty\subseteq \{x\in \mathbf{R}^n\mid f^\infty(x)\leq 0\}$ assumption.
\end{remark}

\begin{example}
	Consider the vector polynomial $f=(f_{1}, f_{2})$  with
$$f_{1}(x_{1}, x_{2})=x_{1}, f_{2}(x_{1}, x_{2})=x_{2}$$
and
$$K=\{(x_1, x_2)\in\mathbf{R}^2\mid 0\leq x_{1}, 0\leq x_{2}\}.$$
It is easy to verify that $(f_{1})^{\infty}_{d_1}(x_{1}, x_{2})=x_{1}$, and $(f_{2})^{\infty}_{d_2}(x_{1}, x_{2})=x_{2}$. Let $S=K$. Then $S_\infty=K_\infty$. Then $S_\infty\nsubseteq \{x\in \mathbf{R}^2\mid f^\infty(x)\leq 0\}$. Clearly, we can calculate $SOL^{s}(S_{\infty}, f^{\infty}_\mathbf{d})=\{(0, 0)\}$. However, $SOL^{w}(S_{\infty}, f^{\infty}_\mathbf{d})=SOL(S_{\infty}, (f_{i})^{\infty}_{d_i})=SOL(S_{\infty}, \{\sum_{i=1}^{q}\lambda_i f_i\}^{\infty}_d)=\{(x_1, x_2)\in\mathbf{R}^2\mid x_{1}=0, x_{2}\in\mathbf{R}\}\cup \{(x_1, x_2)\in\mathbf{R}^2\mid x_{2}=0, x_{1}\in\mathbf{R}\}$ for all $i\in \{1, 2\}$ and $\lambda=(\lambda_1,\lambda_2)\in \mathbf{R}^2_+\setminus\{(0, 0)\}$. This means that the conclusions (i)-(iii) in Theorem \ref{2410122}  are not valid.
\end{example}

\begin{remark}
It is notice that the following example shows that if there exists $\lambda=(\lambda_1,\lambda_2,\dots,\lambda_q)\in \mathbf{R}^q_+\setminus\{0\}$ such that $SOL(S_{\infty}, \{\sum_{i=1}^{q}\lambda_i f_i\}^{\infty}_d)=\{0\}$, then conditions (i)-(iv) in Theorem \ref{2410122} may not hold.
\end{remark}

\begin{example}
Consider the vector polynomial $f=(f_{1}, f_{2})$  with
$$f_{1}(x_{1}, x_{2})=x^2_{1}x_{2}+x_{1}, f_{2}(x_{1}, x_{2})=-x^2_{1}x_{2}+x_{2}$$
and
$$K=\{(x_1, x_2)\in\mathbf{R}^2\mid 0\leq x_{1}, 0\leq x_{2}\}.$$
Clearly, $(f_{1})^{\infty}_{d_1}(x_{1}, x_{2})=x^2_{1}x_{2}$ and $(f_{2})^{\infty}_{d_2}(x_{1}, x_{2})=-x^2_{1}x_{2}$. Let $\bar x=(0, 0)\in K$ and $S=K_\infty\bigcap\{x\in \mathbf{R}^2\mid f^\infty(x)\leq 0\}$. Then $S_\infty=\{(x_1,x_2)\in\mathbf{R}^2\mid x_{1}=0, 0\leq x_{2}\}\bigcup \{(x_1,x_2)\in\mathbf{R}^2\mid x_{2}=0, 0\leq x_{1}\}$. Let $\lambda=(1,1)\in \mathbf{R}^2_+\setminus\{(0, 0)\}$. Then we have get $SOL(S_{\infty}, \{\sum_{i=1}^{2}\lambda_i f_i\}^{\infty}_d)=\{(0, 0)\}$. However, $SOL(S_{\infty}, f_2^{\infty})=S_\infty\neq \{(0, 0)\}$. Thus, the conditions (iii) in Theorem \ref{2410122} does not hold. And so,  the conclusions (i)-(ii) and (iv) in Theorem \ref{2410122} also do not hold.
\end{example}

In particular, let $\bar x\in K$ and $S=K_{\bar x}$. Since $(K_{\bar x})_\infty\subseteq \{x\in \mathbf{R}^n\mid f^\infty(x)\leq 0\}$, by Theorem \ref{2410122}, we have the following result.
\begin{corollary}\label{gpro}Let $\bar x\in K$. The following results are equivalent:
\begin{itemize}
	\item[(i)] $SOL^{s}((K_{\bar x})_{\infty}, f^{\infty}_\mathbf{d})=\{0\}$;
	\item[(ii)] $SOL^{w}((K_{\bar x})_{\infty}, f^{\infty}_\mathbf{d})=\{0\}$;
	\item[(iii)]$SOL((K_{\bar x})_{\infty}, (f_{i})^{\infty}_{d_i})=\{0\}$ for all $i\in \{1, \dots, q\}$.
    \item[(iv)] $SOL((K_{\bar x})_{\infty}, \{\sum_{i=1}^{q}\lambda_i f_i\}^{\infty}_d)=\{0\}$ for any $\lambda=(\lambda_1,\lambda_2,\dots,\lambda_q)\in \mathbf{R}^q_+\setminus\{0\}$.
\end{itemize}
    Moreover, If the one of the conditions (i)-(iv) holds, then there exists $\lambda=(\lambda_1,\lambda_2,\dots,\lambda_q)\in \mathbf{R}^q_+\setminus\{0\}$ such that $SOL((K_{\bar x})_{\infty}, \{\sum_{i=1}^{q}\lambda_i f_i\}^{\infty}_d)=\{0\}$.
\end{corollary}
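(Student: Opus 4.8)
The plan is to obtain Corollary \ref{gpro} as a direct specialization of Theorem \ref{2410122}, taking the auxiliary set to be $S=K_{\bar x}$. First I would check that this choice is admissible for the theorem: the set $K_{\bar x}=\{x\in K: f_i(x)\leq f_i(\bar x),\ i=1,\dots,q\}$ is nonempty, since $\bar x\in K_{\bar x}$, and closed, being the intersection of the closed set $K$ with the closed sublevel sets of the continuous polynomials $f_i$. Hence the first (unconditional) part of Theorem \ref{2410122} applies with $S=K_{\bar x}$. Matching the labels --- Corollary (ii) is Theorem (i), Corollary (iii) is Theorem (ii), and Corollary (iv) is Theorem (iii) --- this immediately yields the equivalence of conditions (ii), (iii) and (iv).

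To bring the strong-efficiency condition (i) into the equivalence, I would next verify the extra hypothesis of Theorem \ref{2410122}, namely $S_\infty=(K_{\bar x})_\infty\subseteq\{x\in\mathbf{R}^n\mid f^\infty_{\mathbf{d}}(x)\leq 0\}$. This inclusion has in fact already been recorded in the discussion preceding the definition of the relative regularity conditions: given $v\in (K_{\bar x})_\infty$ there exist $t_k\to+\infty$ and $x_k\in K_{\bar x}$ with $t_k^{-1}x_k\to v$, and dividing the defining inequalities $f_i(x_k)\leq f_i(\bar x)$ by $t_k^{d_i}$ and letting $k\to+\infty$ gives $(f_i)^\infty_{d_i}(v)\leq 0$ for every $i$, i.e. $f^\infty_{\mathbf{d}}(v)\leq 0$. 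With this inclusion available, the ``Moreover'' clause of Theorem \ref{2410122} supplies the equivalence of Corollary (i) (which is Theorem (iv)) with (ii)--(iv), so all four conditions are equivalent.

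Finally, the concluding assertion --- that any of (i)--(iv) forces the existence of some $\lambda\in\mathbf{R}^q_+\setminus\{0\}$ with $SOL((K_{\bar x})_{\infty},\{\sum_{i=1}^{q}\lambda_i f_i\}^{\infty}_d)=\{0\}$ --- is exactly the last sentence of Theorem \ref{2410122} instantiated at $S=K_{\bar x}$. Since every step is a specialization of an already-proved theorem, I do not expect a substantive obstacle; the only point that genuinely needs attention is confirming the inclusion $(K_{\bar x})_\infty\subseteq\{x\in\mathbf{R}^n\mid f^\infty_{\mathbf{d}}(x)\leq 0\}$, so that condition (i) can legitimately be folded into the equivalence, and this is settled by the limiting argument above.
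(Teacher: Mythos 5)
Your proposal is correct and is essentially the paper's own proof: the paper obtains Corollary \ref{gpro} exactly by setting $S=K_{\bar x}$ in Theorem \ref{2410122} and invoking the inclusion $(K_{\bar x})_\infty\subseteq\{x\in\mathbf{R}^n\mid f^\infty_{\mathbf{d}}(x)\leq 0\}$, which it had established earlier (in Section 2) by the same dividing-by-$t_k^{d_i}$ limiting argument you give. Your label matching and the verification that $K_{\bar x}$ is nonempty and closed are accurate, so there is no gap.
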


\begin{remark}
\cite[Theorem 3.6]{LDY1} shows that $SOL^{s}(K_{\infty}, f^{\infty}_\mathbf{d})=\{0\}$ implies  $SOL(K_{\infty}, (f_{i})^{\infty}_{d_i})\neq\emptyset$ for each $i\in \{1,  \dots, q\}$. However, \cite[Example 3.6]{LDY1} shows that $SOL^{s}(K_{\infty}, f^{\infty}_\mathbf{d})=\{0\}$ does not imply  $SOL(K_{\infty}, (f_{i})^{\infty}_{d_i})= \{0\}$ for each $i\in \{1,  \dots, q\}$. As a comparison, Theorem \ref{2410122} and Corollary \ref{gpro} show that $SOL^{s}(S_{\infty}, f^{\infty}_\mathbf{d})=\{0\}$ is equivalent with $SOL(S_{\infty}, (f_{i})^{\infty}_{d_i})=\{0\}$ for all $i\in \{1, \dots, q\}$ with $S_\infty\subseteq \{x\in \mathbf{R}^n\mid f^\infty_\mathbf{d}(x)\leq 0\}$.
\end{remark}

When $f$ is a convex polynomial mapping on $K$ and $K$ is a convex set, by Corollary \ref{gpro}, we also have the following result.

\begin{corollary}
 Assume that $K$ is a convex set and $f$ is a convex polynomial mapping on $K$. Let $S=K_\infty\bigcap\{x\in \mathbf{R}^n\mid f(x)\leq f(\bar x)\}_\infty$ with $\bar x\in K$.
Then the following results are equivalent:
	\begin{itemize}
\item[(i)] $SOL^{s}(S_{\infty}, f^{\infty}_\mathbf{d})=\{0\}$;
	\item[(ii)] $SOL^{w}(S_{\infty}, f^{\infty}_\mathbf{d})=\{0\}$
    \item[(iii)] $SOL(S_{\infty}, (f_{i})^{\infty}_{d_i})=\{0\}$ for all $i\in \{1, \dots, q\}$.
    \item[(iv)] $SOL(S_{\infty}, \{\sum_{i=1}^{q}\lambda_i f_i\}^{\infty}_d)=\{0\}$ for any $\lambda=(\lambda_1,\lambda_2,\dots,\lambda_q)\in \mathbf{R}^q_+\setminus\{0\}$.
\end{itemize}
    Moreover, If the one of the conditions (i)-(iv) holds, then there exists $\lambda=(\lambda_1,\lambda_2,\dots,\lambda_q)\in \mathbf{R}^q_+\setminus\{0\}$ such that $SOL(S_{\infty}, \{\sum_{i=1}^{q}\lambda_i f_i\}^{\infty}_d)=\{0\}$.
\end{corollary}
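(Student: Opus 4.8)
The plan is to reduce the statement to Corollary \ref{gpro}, which already contains exactly the same chain of equivalences but phrased in terms of $(K_{\bar x})_\infty$ rather than $S_\infty$. Thus the entire task amounts to showing that, under the convexity hypotheses, $S_\infty$ coincides with $(K_{\bar x})_\infty$; once this identification is in hand, every assertion (i)--(iv), together with the final \emph{moreover} clause, transfers verbatim from Corollary \ref{gpro}.

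First I would observe that $S=K_\infty\bigcap\{x\in\mathbf{R}^n\mid f(x)\leq f(\bar x)\}_\infty$ is the intersection of two closed cones, namely the asymptotic cone $K_\infty$ and the asymptotic cone of the sublevel set $\{x\in\mathbf{R}^n\mid f(x)\leq f(\bar x)\}$, and hence is itself a closed cone. Recalling from Section 2 that $C_\infty=C$ for every closed cone $C$ (in particular $(K_\infty)_\infty=K_\infty$), this immediately gives $S_\infty=S$.

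Next I would bring in convexity. As already recorded in the preliminary discussion, the inclusion $(K_{\bar x})_\infty\subseteq K_\infty\bigcap\{x\in\mathbf{R}^n\mid f(x)\leq f(\bar x)\}_\infty$ holds in general by \cite[Proposition 3.9]{RC}, and when $K$ is a convex set and $f$ is a convex mapping on $K$ it becomes an equality. Consequently $(K_{\bar x})_\infty=S$, and combining this with the previous step yields $S_\infty=S=(K_{\bar x})_\infty$.

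With the identification $S_\infty=(K_{\bar x})_\infty$ in place, I would simply invoke Corollary \ref{gpro}: the four sets occurring in (i)--(iv) are precisely the sets of Corollary \ref{gpro} with $(K_{\bar x})_\infty$ rewritten as $S_\infty$, so their equivalence, as well as the existence of a suitable $\lambda=(\lambda_1,\lambda_2,\dots,\lambda_q)\in\mathbf{R}^q_+\setminus\{0\}$ in the \emph{moreover} part, follows at once. The only genuine content lies in the convexity-based equality $(K_{\bar x})_\infty=K_\infty\bigcap\{x\in\mathbf{R}^n\mid f(x)\leq f(\bar x)\}_\infty$; since this is already available from the preliminary discussion, I do not anticipate any substantive obstacle, and the argument reduces to transferring Corollary \ref{gpro} through this set identity.
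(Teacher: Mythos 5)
Your proof is correct and is essentially the paper's own argument: the paper derives this corollary directly from Corollary \ref{gpro} via the convexity-based identity $(K_{\bar x})_\infty=K_\infty\bigcap\{x\in\mathbf{R}^n\mid f(x)\leq f(\bar x)\}_\infty$ recorded in the preliminaries. Your only addition is to spell out the (correct) observation that $S$ is a closed cone, hence $S_\infty=S=(K_{\bar x})_\infty$, which the paper leaves implicit.
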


By the definitions of the relative regularity conditions, we  know that the relatively strong regularity implies the relatively weak regularity. The following result gives a their equivalency.

\begin{proposition}\label{2410144}
	Let the nonempty set $S\subseteq \mathbf{R^n}$ with $S_\infty\subseteq K_\infty$. Assume that $f$ is bounded from below on $S$. Then $f$ is relatively strongly regular with $S$ on $K$ if and only if $f$ is relatively weakly regular with $S$ on $K$.
\end{proposition}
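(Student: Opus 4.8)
The plan is to split the equivalence into its two implications, with essentially all the work concentrated in the direction \emph{relatively weakly regular} $\Rightarrow$ \emph{relatively strongly regular}. The reverse implication is free and should be disposed of first: by Remark~\ref{compa}, relative strong regularity with $S$ amounts to $SOL^{w}(S_{\infty}, f^{\infty}_{\mathbf{d}})\in\{\{0\},\emptyset\}$ and relative weak regularity to $SOL^{s}(S_{\infty}, f^{\infty}_{\mathbf{d}})\in\{\{0\},\emptyset\}$; since the inclusion $SOL^{s}(S_{\infty}, f^{\infty}_{\mathbf{d}})\subseteq SOL^{w}(S_{\infty}, f^{\infty}_{\mathbf{d}})$ always holds, boundedness of the larger set forces boundedness of the smaller one. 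So the only substantive task is the forward implication.

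For that implication I would first cash in the hypothesis that $f$ is bounded from below on $S$, repeating the asymptotic estimate of Proposition~\ref{notbounded}(ii). Choosing $r_i$ with $f_i\geq r_i$ on $S$, taking any $v\in S_\infty$ realized as $t_k^{-1}x_k\to v$ with $x_k\in S$ and $t_k\to+\infty$, and dividing $f_i(x_k)\geq r_i$ by $t_k^{d_i}$ before passing to the limit, I obtain $(f_i)^{\infty}_{d_i}(v)\geq 0$ for every $v\in S_\infty$ and every $i$. Thus $0$ is a common minimizer of each coordinate leading term on $S_\infty$, so $0\in SOL^{s}(S_{\infty}, f^{\infty}_{\mathbf{d}})\subseteq SOL^{w}(S_{\infty}, f^{\infty}_{\mathbf{d}})$ and neither solution set is empty. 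Feeding this nonemptiness into the cone characterization of Remark~\ref{compa}, relative weak regularity collapses to the sharp statement $SOL^{s}(S_{\infty}, f^{\infty}_{\mathbf{d}})=\{0\}$, and the goal reduces to proving $SOL^{w}(S_{\infty}, f^{\infty}_{\mathbf{d}})=\{0\}$.

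The last step is where I expect the genuine difficulty, and the natural tool is Theorem~\ref{2410122}. There the conditions $SOL^{w}(S_{\infty}, f^{\infty}_{\mathbf{d}})=\{0\}$ and $SOL(S_{\infty}, (f_{i})^{\infty}_{d_i})=\{0\}$ for all $i$ are equivalent unconditionally, but their equivalence with $SOL^{s}(S_{\infty}, f^{\infty}_{\mathbf{d}})=\{0\}$ is only claimed under the extra structural condition $S_\infty\subseteq\{x\mid f^{\infty}_{\mathbf{d}}(x)\leq 0\}$. This is exactly the delicate point: one cannot upgrade $SOL^{s}=\{0\}$ to $SOL^{w}=\{0\}$ by quoting $SOL^{s}\subseteq SOL^{w}$, since that inclusion runs the wrong way, so the whole argument hinges on the relevant $S$ carrying the sublevel structure $S_\infty\subseteq\{f^{\infty}_{\mathbf{d}}\leq 0\}$ that activates part (iv) of Theorem~\ref{2410122}. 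Once that structure is in force the conclusion is in fact clean: combined with the nonnegativity $(f_i)^{\infty}_{d_i}\geq 0$ on $S_\infty$ obtained above, it pins $f^{\infty}_{\mathbf{d}}\equiv 0$ on $S_\infty$, whence $SOL^{s}(S_{\infty}, f^{\infty}_{\mathbf{d}})=SOL^{w}(S_{\infty}, f^{\infty}_{\mathbf{d}})=S_\infty$ and both regularity notions reduce to $S_\infty=\{0\}$, so that $SOL^{s}=\{0\}$ immediately gives $SOL^{w}=\{0\}$, i.e.\ relative strong regularity. I therefore expect the careful identification of this sublevel structure for the $S$ governed by the relative regularity conditions — rather than any asymptotic estimate — to be the load-bearing part of the proof.
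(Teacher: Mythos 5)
Your opening moves coincide with the paper's own proof: boundedness from below gives, via the asymptotic estimate of Proposition~\ref{notbounded}, that $0\in SOL^{s}(S_{\infty},f^{\infty}_{\mathbf{d}})\subseteq SOL^{w}(S_{\infty},f^{\infty}_{\mathbf{d}})$, so by Remark~\ref{compa} relative weak regularity with $S$ becomes $SOL^{s}(S_{\infty},f^{\infty}_{\mathbf{d}})=\{0\}$, relative strong regularity becomes $SOL^{w}(S_{\infty},f^{\infty}_{\mathbf{d}})=\{0\}$, and everything hinges on the implication $SOL^{s}(S_{\infty},f^{\infty}_{\mathbf{d}})=\{0\}\Rightarrow SOL^{w}(S_{\infty},f^{\infty}_{\mathbf{d}})=\{0\}$. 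At that point, however, your proposal stops being a proof: you correctly observe that Theorem~\ref{2410122} delivers this implication only under the extra structural hypothesis $S_\infty\subseteq\{x\in\mathbf{R}^n\mid f^{\infty}_{\mathbf{d}}(x)\leq 0\}$, and you then defer ``the careful identification of this sublevel structure'' instead of establishing it. This is a genuine gap, and it cannot be filled: the proposition assumes only $S_\infty\subseteq K_\infty$ and boundedness from below on $S$, and boundedness from below forces the \emph{opposite} inequality $(f_i)^{\infty}_{d_i}\geq 0$ on $S_\infty$; nothing in the hypotheses gives $f^{\infty}_{\mathbf{d}}\leq 0$ on $S_\infty$.

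Indeed the statement as written is false, so no completion of your argument exists. Take $q=2$, $f(x_1,x_2)=(x_1^2,x_2^2)$ and $K=S=\mathbf{R}^2$, so that $S_\infty=K_\infty=\mathbf{R}^2$, $(K_{\bar x})_\infty=\{0\}$ for every $\bar x$, and $f$ is bounded from below on $S$. Then $f^{\infty}_{\mathbf{d}}=(x_1^2,x_2^2)$, and $SOL^{s}(S_{\infty},f^{\infty}_{\mathbf{d}})=\{0\}$ because every nonzero point is dominated by the origin, so $f$ is relatively weakly regular with $S$ on $K$; yet $SOL^{w}(S_{\infty},f^{\infty}_{\mathbf{d}})=\{x_1=0\}\cup\{x_2=0\}$ is unbounded, so $f$ is not relatively strongly regular with this $S$. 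For the comparison you were asked to make: the paper's own proof performs exactly your reduction and then cites Theorem~\ref{2410122} wholesale, silently using the equivalence of its items (i) and (iv) without verifying the hypothesis $S_\infty\subseteq\{f^{\infty}_{\mathbf{d}}\leq 0\}$ that item (iv) requires; so the ``delicate point'' you isolated is precisely where the paper's argument breaks down. The result is salvageable only by adding that sublevel hypothesis (for instance $S=K_{\bar x}$ under section-boundedness from below, as in Corollary~\ref{1022}, since $(K_{\bar x})_\infty\subseteq\{x\mid f^{\infty}_{\mathbf{d}}(x)\leq 0\}$ always holds), and in that case, as you observe, both regularity conditions collapse to $S_\infty=\{0\}$ and the equivalence is immediate.
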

\begin{proof}
	Since $f$ is bounded from below on $S$, we have $0\in SOL^{s}(S_{\infty},f^\infty_\mathbf{d})\subseteq SOL^{w}(S_{\infty},f^\infty_\mathbf{d})$ by Proposition \ref{notbounded}. So $f$ is relatively weakly regular with $S$ on $K$ if and only if $SOL^{s}(S_{\infty},f^\infty_\mathbf{d})=\{0\}$. And $f$ is relatively strongly regular with $S$ on $K$ if and only if $SOL^{w}(S_{\infty},f^\infty_\mathbf{d})=\{0\}$. Thus, the result follows from Theorem \ref{2410122}. \end{proof}

In particular, let $\bar x\in K$ and $S=K_{\bar x}$. By Proposition \ref{2410144}, we can obtain the following result.

\begin{corollary}\label{1022}
	Let $\bar x\in K$. If $f$ is section-bounded from below at $\bar x$. Then $f$ is relatively strongly regular with $K_{\bar x}$ on $K$ if and only if $f$ is relatively weakly regular with $K_{\bar x}$ on $K$.
\end{corollary}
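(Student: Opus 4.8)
The plan is to obtain this corollary as a direct specialization of Proposition \ref{2410144} with the choice $S = K_{\bar x}$. Before invoking that proposition I must check that its two hypotheses hold for this particular $S$. First, $K_{\bar x}$ is nonempty since $\bar x \in K_{\bar x}$, and because $K_{\bar x} \subseteq K$ the monotonicity of the asymptotic cone operation gives $(K_{\bar x})_\infty \subseteq K_\infty$; hence the requirement $S_\infty \subseteq K_\infty$ is met.

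Second, I would verify that $f$ is bounded from below on $K_{\bar x}$. This is the only place where the section-boundedness hypothesis is used. By the equivalent characterization of section-boundedness from below recorded immediately after Definition \ref{strong section-boundedness}, the assumption that $f$ is section-bounded from below at $\bar x$ means there exists $r = (r_1, \dots, r_q) \in \mathbf{R}^q$ such that $f_i(x) \geq r_i$ for every $i \in \{1,\dots,q\}$ and every $x \in K$ satisfying $f_i(x) \leq f_i(\bar x)$ for all $i$, that is, for every $x \in K_{\bar x}$. Thus each component $f_i$ is bounded below by $r_i$ on $K_{\bar x}$, which is exactly the statement that $f$ is bounded from below on $S = K_{\bar x}$.

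With both hypotheses in hand, Proposition \ref{2410144} applies verbatim and delivers the equivalence between relative strong regularity with $K_{\bar x}$ on $K$ and relative weak regularity with $K_{\bar x}$ on $K$, which is the assertion of the corollary. I do not expect any genuine obstacle here: the single point requiring a moment's care is the translation of ``section-bounded from below at $\bar x$'' into ordinary ``bounded from below on the sublevel set $K_{\bar x}$,'' and this is immediate from the restated definition. All of the analytic content — notably the use of Proposition \ref{notbounded} to guarantee $0 \in SOL^{s}((K_{\bar x})_\infty, f^\infty_{\mathbf{d}})$ and the appeal to the equivalences of Theorem \ref{2410122} — is already encapsulated within the proof of Proposition \ref{2410144}, so nothing beyond the hypothesis check needs to be reproduced.
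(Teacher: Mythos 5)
Your proposal is correct and follows exactly the paper's own route: the paper obtains Corollary \ref{1022} precisely by specializing Proposition \ref{2410144} to $S=K_{\bar x}$. Your explicit verification of the two hypotheses (nonemptiness with $(K_{\bar x})_\infty\subseteq K_\infty$, and the translation of section-boundedness from below at $\bar x$ into boundedness from below of $f$ on $K_{\bar x}$) is sound and merely spells out what the paper leaves implicit.
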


\begin{remark}
	The following example shows that the boundedness from below of $f$ in Proposition \ref{2410144} and Corollary \ref{1022} plays an essential role and it cannot be dropped.
	\begin{example}
		Consider the vector polynomial $f=(f_{1}, f_{2})$  with
$$f_{1}(x_{1}, x_{2})=x_{1}, f_{2}(x_{1}, x_{2})=x_{2}$$
and
$$K=\{(x_1,x_2)\in\mathbf{R}^2: x_1\geq 0, x_2\leq 0\}.$$
Clearly, $(f_{1})^{\infty}_{d_1}(x_{1}, x_{2})=x_{1}$ and $(f_{2})^{\infty}_{d_2}(x_{1}, x_{2})=x_{2}$. Let $\bar x=(\bar{x}_1, \bar{x}_2)\in K$ and $S=K_{\bar x}$. It is easy to verify that $S_{\infty}=\{(x_{1}, x_{2})\in \mathbf{R}^{2}: x_{1}=0, x_{2}\leq 0\}$.  Then $SOL^{s}(S_{\infty}, f^{\infty}_\mathbf{d})=\emptyset$ and $SOL^{w}(S_{\infty}, f^{\infty}_\mathbf{d})=\{(x_{1}, x_{2})\in \mathbf{R}^{2}: x_{1}=0, x_{2}\leq 0\}$ is a unbounded set. Thus, we know that $f$ is relatively weakly regular with $S$ on $K$, but $f$ is not relatively strongly regular with $S$ on $K$. On the other hand, it is easy to see that $f$ is not bounded from below on $S$.
	\end{example}
\end{remark}

The following conclusions represent some necessary conditions of the existence of Pareto efficient solutions for the polynomial vector optimizations.

\begin{proposition}\label{2410143}
	The following results hold:
	\begin{itemize}
		\item[(i)] If $SOL^{s}(K, f)\neq\emptyset$, then there exist $\bar x\in K$ and a nonempty closed set $S\subseteq\mathbf{R}^n$ satisfying with $(K_{\bar x})_\infty\subseteq S_\infty\subseteq K_\infty$  such that $SOL(S_{\infty}, \{\sum_{i=1}^{q}\lambda_i f_i\}^{\infty}_d)\neq\emptyset$ for any $\lambda=(\lambda_1,\lambda_2,\dots,\lambda_q)\in\mathbf{R}^q_+\backslash\{0\}$.
		\item[(ii)] If $SOL^{w}(K, f)\neq\emptyset$, then there exist $\bar x\in K$, a nonempty closed set $S\subseteq\mathbf{R}^n$ satisfying with $(K_{\bar x})_\infty\subseteq S_\infty\subseteq K_\infty$, and $\lambda=(\lambda_1,\lambda_2,\dots,\lambda_q)\in \mathbf{R}^q_+\backslash\{0\}$ such that $SOL(S_{\infty}, \{\sum_{i=1}^{q}\lambda_i f_i\}^{\infty}_d)\neq\emptyset$.
	\end{itemize}
\end{proposition}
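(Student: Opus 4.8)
The plan is to exploit the two boundedness-from-below propositions established above, namely Proposition \ref{2410141} and Proposition \ref{2410234}, by taking the candidate set $S$ to be a sublevel set of the form $K_{\bar x}$ and verifying the appropriate section-boundedness of $f$ on it. In both parts the chosen $S=K_{\bar x}$ is automatically nonempty (it contains $\bar x$) and closed (it is the intersection of the closed set $K$ with finitely many polynomial sublevel sets), and it satisfies $S_\infty=(K_{\bar x})_\infty\subseteq K_\infty$, so the asymptotic-cone requirement $(K_{\bar x})_\infty\subseteq S_\infty\subseteq K_\infty$ holds trivially for the chosen base point. The whole task is therefore reduced to producing the boundedness hypothesis needed to invoke the relevant proposition.

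For part (i), I would fix any Pareto efficient solution $\bar x\in SOL^{s}(K,f)$. If $x\in K_{\bar x}$ then $f(x)-f(\bar x)\in -\mathbf{R}^q_+$, while Pareto efficiency forbids $f(x)-f(\bar x)\in -\mathbf{R}^q_+\setminus\{0\}$; hence $f(x)=f(\bar x)$. Thus $f$ is constant on $K_{\bar x}$, and in particular $f$ is bounded from below on $S=K_{\bar x}$. Proposition \ref{2410141} then yields $0\in SOL(S_\infty,\{\sum_{i=1}^q\lambda_i f_i\}^\infty_d)$, and hence this set is nonempty, for every $\lambda\in\mathbf{R}^q_+\setminus\{0\}$, which is exactly the assertion of (i).

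For part (ii), I would fix a weak Pareto efficient solution $\bar x\in SOL^{w}(K,f)$. Taking $\bar a=f(\bar x)$ in Definition \ref{section-boundedness}, the weak efficiency relation $f(x)-f(\bar x)\notin -\mbox{ int }\mathbf{R}^q_+$, valid for all $x\in K$ and hence for all $x\in K_{\bar x}$, shows that $f$ is weakly section-bounded from below on $K$. Invoking the equivalent characterization recorded in Remark \ref{strong section compare}, there exist a point $\bar x'\in K$ and an index $i_0\in\{1,\dots,q\}$ such that $f_{i_0}$ is bounded from below on $K_{\bar x'}$; in other words $f$ is $\{i_0\}$-section-bounded from below at $\bar x'$. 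Applying Proposition \ref{2410234} (equivalently Corollary \ref{2410235}) with $S=K_{\bar x'}$ and $I=\{i_0\}$ produces some $\lambda\in\mathbf{R}^q_+\setminus\{0\}$ with $0\in SOL(S_\infty,\{\sum_{i=1}^q\lambda_i f_i\}^\infty_d)$, which gives (ii) with base point $\bar x'$.

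The delicate point, which I would flag as the main obstacle, is that in part (ii) one should \emph{not} expect to use the original weak Pareto solution $\bar x$ itself: weak efficiency only guarantees that no single point makes all components strictly smaller, and it is genuinely possible for \emph{every} component $f_i$ to be unbounded from below on $K_{\bar x}$ (this already occurs in simple two-dimensional examples). Consequently the direct analogue of the argument in (i) fails, and the passage through the weak-section-boundedness characterization of Remark \ref{strong section compare}, which relocates the base point to some $\bar x'$ at which a single component is controlled, is essential. All remaining steps are routine applications of the previously established propositions.
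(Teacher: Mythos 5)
Your proof is correct, and it reaches the same conclusions by a slightly different route than the paper. For part (i) the paper cites \cite[Proposition 3.2]{24LDY3} to get $f\equiv f(\bar x)$ on $K_{\bar x}$ and then runs the recession-polynomial computation by hand (showing $\{\sum_{i=1}^q\lambda_i f_i\}^\infty_d\equiv 0$ on $S_\infty$); you instead prove the constancy directly from the definitions of $K_{\bar x}$ and Pareto efficiency (a two-line argument that makes the proof self-contained) and then delegate the asymptotic computation to Proposition \ref{2410141}, which encapsulates exactly that calculation. For part (ii) the divergence is more substantive: the paper invokes \cite[Proposition 3.1]{24LDY3}, which produces a point $\bar x$ and an index $i_0$ with $f_{i_0}$ \emph{constant} on $K_{\bar x}$, and again computes directly that $(f_{i_0})^\infty_{d_{i_0}}\equiv 0$ on $S_\infty$; you only extract \emph{boundedness from below} of one component, by observing that weak efficiency of $\bar x$ gives weak section-boundedness from below of $f$ on $K$ (take $\bar a=f(\bar x)$ in Definition \ref{section-boundedness}) and then passing through the equivalence of Remark \ref{strong section compare} (i.e.\ \cite[Proposition 3]{LDY2}) to relocate the base point, after which Proposition \ref{2410234} finishes. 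Both routes lean on one external citation ([24LDY3] versus [LDY2]), but yours uses a weaker intermediate fact (boundedness rather than constancy), which is all the nonemptiness conclusion needs. Your closing remark about the delicate point is also accurate — and consistent with the paper's own proof, whose cited proposition likewise does not promise that the weakly efficient point itself serves as the base point; your two-axes example ($K$ the union of the coordinate axes, $f=(x_1,x_2)$, $\bar x=0$) indeed has both components unbounded below on $K_{\bar x}$.
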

\begin{proof}
\emph{(i)} By the assumptions, let $\bar x\in SOL^{s}(K, f)$. By \cite[Proposition 3.2]{24LDY3}, we get $f_i(x)\equiv f_i(\bar x)$ for all $i\in \{1,2,\dots,q\}$ and $x\in K_{\bar x}$. Let $S=K_{\bar x}$. Then $(K_{\bar x})_\infty\subseteq S_\infty\subseteq K_\infty$. Let $x\in S_{\infty}$ be arbitrary. Then there exist sequences $\{x_k\}\subseteq S$ and $\{\lambda_k\}$ with $\lambda_k\to +\infty$ as $k\to +\infty$ such that $\frac{x_k}{\lambda_k}\to x$  as $k\to +\infty$. Since $\{x_k\}\subseteq S$, we have $f_i(x_k)\equiv f_i(\bar x)$ for all $i\in \{1,2,\dots,q\}$.  Dividing the both sides of these equalities by $\lambda^{d_i}_{k}$ and then letting $k\rightarrow +\infty$, we get
$$(f_i)^\infty_{d_i}(x)\equiv 0,$$
for each $i=1,2,\dots,q$. Let $\lambda=(\lambda_1,\lambda_2,\dots,\lambda_q)\in \mathbf{R}^q_+\setminus\{0\}$ be arbitrary. Then, by the
arbitrariness of $x$ in $S_{\infty}$, we have $\{\sum_{i=1}^{q}\lambda_i f_i\}^{\infty}_d=\sum_{i\in I}\lambda_i (f_i)^{\infty}_{d_i}\equiv 0$ on $S_{\infty}$, where $I=\{i\in\{1,2,\dots,q\}|deg f_i=\max_{j\in\{1,2,\dots,q\}}\{deg f_j\}\}$. Thus, $SOL(S_{\infty}, \sum_{i=1}^{q}\lambda_i (f_i)^{\infty}_{d_i})\neq\emptyset$.

\emph{(ii)} Since $SOL^{w}(K, f)\neq\emptyset$, by \cite[Proposition 3.1]{24LDY3}, there exist $\bar x\in K$ and $i_0\in \{1,2,\dots,q\}$ such that  $f_{i_0}(x)\equiv f_{i_0}(\bar x)$ for any $x\in K_{\bar x}$. Let $S=K_{\bar x}$. Then $(K_{\bar x})_\infty\subseteq S_\infty\subseteq K_\infty$. Let $x\in S_{\infty}$ be arbitrary. Then there exist sequences $\{x_k\}\subseteq K_{\bar x}$ and $\{\lambda_k\}$ with $\lambda_k\to +\infty$ as $k\to +\infty$ such that $\frac{x_k}{\lambda_k}\to x$  as $k\to +\infty$. Since $\{x_k\}\subseteq S$, we have $f_{i_0}(x_k)\equiv f_{i_0}(\bar x)$ for all $k$.  Dividing the both sides of these equalities by $\lambda^{d_{i_0}}_{k}$ with $d_{i_0}=deg f_{i_0}$ and then letting $k\rightarrow +\infty$, we get
$$(f_{i_0})^\infty_{d_{i_0}}(x)\equiv 0.$$
Thus, by the arbitration of $x$ in $S_{\infty}$, we can prove $SOL(S_{\infty}, \{\sum_{i=1}^{q}\lambda_i f_i\}^{\infty}_d)\neq\emptyset$, where $\lambda_{i_0}=1$, $\lambda_i=0$ with $i\in \{1,2,\dots,q\}\setminus\{i_0\}$ and $d=d_{i_0}$.\end{proof}

For some $\bar x\in K$ and nonempty closed set $S\subseteq\mathbf{R}^n$ satisfying with $(K_{\bar x})_\infty\subseteq S_\infty\subseteq K_\infty$, the following result shows that  $SOL^{s}(S_{\infty}, f^{\infty}_\mathbf{d})\neq\emptyset$ and $SOL^{w}(S_{\infty}, f^{\infty}_\mathbf{d})\neq\emptyset$ are also necessary conditions for the existence of Pareto efficient solutions and weakly Pareto efficient solutions for polynomial vector optimization problems, respectively.

\begin{proposition} \label{1019}
 The following results hold:
  \begin{itemize}
  	\item [(i)] If $SOL^{s}(K, f)\neq\emptyset$, then there exist $\bar x\in K$ and a nonempty closed set $S\subseteq\mathbf{R}^n$ satisfying with $(K_{\bar x})_\infty\subseteq S_\infty\subseteq K_\infty$ such that $SOL^{s}(S_{\infty}, f^{\infty}_\mathbf{d})\neq\emptyset$;
  	\item [(ii)] If $SOL^{w}(K, f)\neq\emptyset$, then there exist $\bar x\in K$ and a nonempty closed set $S\subseteq\mathbf{R}^n$ satisfying with $(K_{\bar x})_\infty\subseteq S_\infty\subseteq K_\infty$ such that $SOL^{w}(S_{\infty}, f^{\infty}_\mathbf{d})\neq\emptyset$.
  \end{itemize}
\end{proposition}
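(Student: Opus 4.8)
The plan is to derive both parts as direct consequences of the intermediate facts already established inside the proof of Proposition \ref{2410143}, the point being that the recession leading term $f^{\infty}_{\mathbf{d}}$ is forced to vanish (fully or partially) on $S_\infty$, which places $0$ in the relevant solution set. Throughout I take $S=K_{\bar x}$, so that automatically $(K_{\bar x})_\infty\subseteq S_\infty\subseteq K_\infty$ and, since $S_\infty$ is a cone, $0\in S_\infty$ with $f^{\infty}_{\mathbf{d}}(0)=0$.

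For part \emph{(i)}, I would pick $\bar x\in SOL^{s}(K,f)$ and set $S=K_{\bar x}$. The argument in the proof of Proposition \ref{2410143} \emph{(i)} (via \cite[Proposition 3.2]{24LDY3}) shows that $(f_i)^{\infty}_{d_i}(x)=0$ for every $x\in S_\infty$ and every $i\in\{1,\dots,q\}$, i.e. $f^{\infty}_{\mathbf{d}}\equiv 0$ on $S_\infty$. Consequently, for each $x\in S_\infty$ one has $f^{\infty}_{\mathbf{d}}(x)-f^{\infty}_{\mathbf{d}}(0)=0\notin -\mathbf{R}^q_+\backslash\{0\}$, so $0\in SOL^{s}(S_\infty,f^{\infty}_{\mathbf{d}})$ and in particular $SOL^{s}(S_\infty,f^{\infty}_{\mathbf{d}})\neq\emptyset$.

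For part \emph{(ii)}, since $SOL^{w}(K,f)\neq\emptyset$, the argument in the proof of Proposition \ref{2410143} \emph{(ii)} (via \cite[Proposition 3.1]{24LDY3}) furnishes $\bar x\in K$ and an index $i_0\in\{1,\dots,q\}$ with $(f_{i_0})^{\infty}_{d_{i_0}}(x)=0$ for all $x\in S_\infty$, where again $S=K_{\bar x}$. Here only one vanishing component is available, but that already suffices for weak efficiency: for any $x\in S_\infty$ the $i_0$-th coordinate of $f^{\infty}_{\mathbf{d}}(x)-f^{\infty}_{\mathbf{d}}(0)$ equals $0$, so this difference cannot have all coordinates strictly negative and hence $f^{\infty}_{\mathbf{d}}(x)-f^{\infty}_{\mathbf{d}}(0)\notin -\mbox{ int }\mathbf{R}^q_+$. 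Therefore $0\in SOL^{w}(S_\infty,f^{\infty}_{\mathbf{d}})$, giving $SOL^{w}(S_\infty,f^{\infty}_{\mathbf{d}})\neq\emptyset$.

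There is no genuine obstacle here; the statement is essentially a repackaging of what was proved for Proposition \ref{2410143}. The only point requiring care is the asymmetry between the two cones defining efficiency: the strong case needs \emph{every} recession component to vanish so that the zero difference avoids $-\mathbf{R}^q_+\backslash\{0\}$, whereas the weak case needs only a \emph{single} vanishing component to block membership in $-\mbox{ int }\mathbf{R}^q_+$. Matching each constraint-qualification output (all $i$ versus a single $i_0$) to the correct ordering cone is the one place to be deliberate; everything else follows from $0\in S_\infty$ and $f^{\infty}_{\mathbf{d}}(0)=0$.
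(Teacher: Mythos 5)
Your proof is correct, and it takes a recognizably different internal route from the paper's, even though both arguments choose the same $\bar x$ and $S=K_{\bar x}$ and both ultimately place $0$ in the asymptotic solution set. The paper argues through \emph{boundedness}: $\bar x\in SOL^{s}(K,f)$ makes $f$ section-bounded from below at $\bar x$ (and, for part (ii), \cite[Proposition 3.1]{24LDY3} gives $I$-section-boundedness at some $\bar x$), after which Corollary \ref{weakpro} --- itself built on Propositions \ref{necessary} and \ref{notbounded}, which convert boundedness from below of the $f_i$ on $K_{\bar x}$ into $(f_i)^{\infty}_{d_i}\geq 0$ on $(K_{\bar x})_\infty$ --- delivers $0\in SOL^{s}(S_\infty,f^{\infty}_{\mathbf{d}})$ (resp.\ $0\in SOL^{w}(S_\infty,f^{\infty}_{\mathbf{d}})$). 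You instead recycle the \emph{constancy} facts from the proof of Proposition \ref{2410143}: $f\equiv f(\bar x)$ on $K_{\bar x}$ in the strong case (\cite[Proposition 3.2]{24LDY3}) and $f_{i_0}\equiv f_{i_0}(\bar x)$ in the weak case (\cite[Proposition 3.1]{24LDY3}), and push these through the scaling limit to conclude that $f^{\infty}_{\mathbf{d}}$ vanishes identically on $S_\infty$ (respectively, that its $i_0$-th component does), whence the efficiency of $0$ is immediate. Your version is more self-contained, bypassing the Section~3 machinery, and yields a strictly stronger intermediate fact --- in case (i) it actually shows $SOL^{s}(S_\infty, f^{\infty}_{\mathbf{d}})=S_\infty$; the paper's version needs only the weaker input of boundedness from below, which is why it can be stated as a two-line corollary of results already proved. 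Your matching of the two constancy outputs to the two ordering cones (all components vanishing to rule out membership in $-\mathbf{R}^q_+\backslash\{0\}$, a single vanishing component to rule out $-\,{\rm int}\,\mathbf{R}^q_+$) is exactly the right point of care, and it is handled correctly.
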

\begin{proof}
	\emph{(i)} Assume that $SOL^{s}(K, f)\neq\emptyset$. Let $\bar x\in SOL^{s}(K, f)$.  Then we see that $f$ is section-bounded from below at $\bar x$. Let $S=K_{\bar x}$. Then $(K_{\bar x})_\infty\subseteq S_\infty\subseteq K_\infty$.  By Corollary \ref{weakpro} \emph{(i)}, we have $0\in SOL^{s}(S_{\infty}, f^{\infty}_\mathbf{d})$. Thus,  $SOL^{s}(S_{\infty}, f^{\infty}_\mathbf{d})\neq\emptyset$.
	
	\emph{(ii)} Assume that $SOL^{w}(K, f)\neq\emptyset$. By \cite[Proposition 3.1]{24LDY3}, we obtain that there exist $\bar x\in K$ and the nonempty index set $I\subseteq\{1,2,\dots,q\}$ such that $f$ is $I$-section-bounded from below at $\bar x$. Let $S=K_{\bar x}$. Then $(K_{\bar x})_\infty\subseteq S_\infty\subseteq K_\infty$. By Corollary \ref{weakpro} \emph{(ii)}, we have $0\in SOL^{w}(S_{\infty}, f^{\infty}_\mathbf{d})$. Thus, $SOL^{w}(S_{\infty}, f^{\infty}_\mathbf{d})\neq\emptyset$.\end{proof}

\begin{remark}
The following example shows that the converse of Propositions \ref{2410143} and \ref{1019} does not hold in general.
\end{remark}

\begin{example}
Consider the vector polynomial  $f=(f_{1}, f_{2})$ with
$$f_{1}(x_{1}, x_{2})=(x^{4}_{1}x^{4}_{2}-1)^{2}+2x^{4}_{1}, f_{2}(x_{1}, x_{2})=(x^{2}_{1}x^{2}_{2}-1)^{2}+4x^{2}_{1}$$
and $K=\mathbf{R}^{2}$. Then $f^\infty_\mathbf{d}(x_1,x_2)=(x^8_1x^8_2, x^4_1x^4_2)$. Let $\bar x\in K$ and $S=K_{\bar x}$. Then $S_\infty=(K_{\bar x})_\infty$. Clearly, $x^8_1x^8_2\geq 0, x^4_1x^4_2\geq 0$ for any $x=(x_1, x_2)\in S_\infty$. It follows from $0\in S_\infty$ that $0\in SOL^{s}(S_{\infty}, f^{\infty}_\mathbf{d})\subseteq SOL^{w}(S_{\infty}, f^{\infty}_\mathbf{d})$ and $(0, 0)\in SOL(S_{\infty}, \{\sum_{i=1}^{2}\lambda_i f_i\}^{\infty}_d)\neq\emptyset$ for any $\lambda=(\lambda_1,\lambda_2)\in\mathbf{R}^2_+\backslash\{(0, 0)\}$. On the other hand, $f_{1}>0$ and $f_{2}>0$ on $K$. However,  $f(\frac{1}{n}, n)=(\frac{2}{n^{4}}, \frac{4}{n^{2}})\to (0, 0)$ as $n\to +\infty$. This implies $SOL^{s}(K, f)\subseteq SOL^{w}(K, f)=\emptyset$.
\end{example}

\section{Relationships between the relative regularity conditions, Palais-Smale condition, weak Palais-Smale condition, M-tameness and properness}

In this section, we investigate relationships between the relative regularity conditions, Palais-Smale condition, weak Palais-Smale condition, M-tameness and properness condition with respect to some index set. First, for nonempty index set $I\subseteq\{1,2,\dots,q\}$,  we recall the definitions of $I$-Palais-Smale condition, $I$-M-tameness and $I$-properness of the restricted mapping $f\mid_K$ of $f$ on $K$.

\begin{definition}\cite[Definition 4.1]{24LDY3}
	Let $I=\{s_1, s_2,\dots, s_p\}\subseteq\{1,2,\dots,q\}$ be a nonempty index set and $f_I: \mathbf{R}^n\to \mathbf{R}^q, f_I=(f_{s_1}, f_{s_2},\dots, f_{s_p})$.
	\begin{itemize}
		\item [(i)] The restricted mapping $f\mid_K$ of $f$ on $K$ is said to be $I$-proper at the sublevel $\bar y\in \mathbf{R}^q$, if $$\forall \{x_k\}\subseteq K,\|x_k\|\to+\infty, f(x_k)\leq \bar y\Longrightarrow \|f_I(x_k)\|\to+\infty\ as\ k\to +\infty;$$
		\item [(ii)] The restricted mapping $f\mid_K$ of $f$ on $K$ is said to be $I$-proper, if it is $I$-proper at every sublevel $\bar y\in \mathbf{R}^q$.
	\end{itemize}
\end{definition}

\begin{remark}
	 As similar to Remark 4.1 in \cite{24LDY3}, when $q=1$ and $f$ is bounded from below, the $I$-properness of the restricted mapping $f|_K$ is equivalent to the coercivity of $f|_K$. When $q\geq 2$, we  know that the $I$-properness of the restricted mapping $f|_K$ is weaker than $\mathbf{R}^q_+$-zero-coercivity of $f$ on $K$ (see e.g. \cite{24LDY3}).
\end{remark}

When $I=\{1,2,\dots,q\}$, we have the following definition.
\begin{definition}\cite[Definition 3.2]{Kim}
	We say that
	\begin{itemize}
		\item [(i)] The restricted mapping $f\mid_K$ of $f$ on $K$ is proper at the sublevel $\bar y\in \mathbf{R}^q$, if $$\forall \{x_k\}\subseteq K,\|x_k\|\to+\infty, f(x_k)\leq \bar y\Longrightarrow \|f(x_k)\|\to+\infty\ as\ k\to +\infty;$$
		\item [(ii)] The restricted mapping $f\mid_K$ of $f$ on $K$ is proper, if it is proper at every sublevel $\bar y\in \mathbf{R}^q$.
	\end{itemize}
\end{definition}

\begin{definition}\cite[Definition 4.2]{24LDY3}\label{IKT}
	  For any nonempty index set $I\subseteq \{1,2,\dots,q\}$ and $y_0\in (\mathbf{R}\cup\{\infty\})^q$, define the following sets:
\begin{equation*}
\begin{split}
&\widetilde{K}^{I}_{\infty,\leq y_0}(f, K):=\{y\in\mathbf{R}^{|I|}\vert \exists \{x_k\}\subseteq K, f(x_k)\leq y_0, \|x_k\|\to +\infty, f_{I}(x_k) \to y \mbox{ and } \nu(x_k)\to 0 \\
&\mbox{ as } k\to +\infty\},\\
&K^{I}_{\infty,\leq y_0}(f, K):=\{y\in\mathbf{R}^{|I|}\vert \exists \{x_k\}\subseteq K, f(x_k)\leq y_0, \|x_k\|\to +\infty, f_{I}(x_k) \to y  \mbox{ and } \|x_k\|\nu(x_k)\\
&\to 0 \mbox{ as } k\to +\infty\},\\
&\mbox{ and }T^{I}_{\infty,\leq y_0}(f, K):=\{y\in\mathbf{R}^{|I|}\vert \exists \{x_k\}\subseteq \Gamma(f, K), f(x_k)\leq y_0, \|x_k\|\to +\infty \mbox{ and } f_I(x_k) \to y \mbox{ as }\\
&k\to +\infty\},
\end{split}
\end{equation*}
where $\nu:\mathbf{R}^n\to \mathbf{R}\cup\{+\infty\}$ is the extended Rabier function defined by
$$\nu(x):=\inf\{\|\sum_{i=1}^{q}\alpha_i \nabla f_i(x)+\omega\|\vert  \omega\in N(x;K), \alpha=(\alpha_1,\alpha_2,\dots,\alpha_q)\in \mathbf{R}^q_+, \sum_{i=1}^{q}\alpha_i=1\},$$
and the tangency variety of $f$ on $K$ defined by
\begin{equation*}
\begin{split}
	&\Gamma(f, K):=\{x\in K\vert \exists (\alpha, \mu)\in \mathbf{R}^q_+\times \mathbf{R} \mbox{ with }  \sum_{i=1}^{q}\alpha_i+|\mu|=1 \mbox{ such that } \\
		&0\in  \sum_{i=1}^{q}\alpha_i \nabla f_i(x)+\mu x+N(x; K)\}.
\end{split}
		\end{equation*}
\end{definition}

\begin{remark}
	By Remark 4.3 in \cite{24LDY3}, we know that the inclusion  $K^I_{\infty, \leq y_0}(f, K)\subseteq \widetilde{K}^I_{\infty, \leq y_0}(f, K)$ holds. When $K=\mathbf{R}^n$,  the inclusion  $T^I_{\infty, \leq y_0}(f, K)\subseteq K^I_{\infty, \leq y_0}(f, K)$ holds. And when $K$ is a closed semi-algebraic set satisfying regularity at infinity, then the inclusion  $T^I_{\infty, \leq y_0}(f, K)\subseteq K^I_{\infty, \leq y_0}(f, K)$ also holds. However, by Remark 4.3 in \cite{24LDY3} again, it is worth noting that if $f$ is not polynomial, then the inclusion  $T^I_{\infty, \leq y_0}(f, K)\subseteq K^I_{\infty, \leq y_0}(f, K)$ may not hold.
\end{remark}

\begin{remark}
	In particular, if $I=\{1,2,\dots,q\}$, then $\widetilde{K}^{I}_{\infty,\leq y_0}(f, K)$, $K^{I}_{\infty,\leq y_0}(f, K)$, and $\mbox{and} \ T^{I}_{\infty,\leq y_0}(f, K)$
 reduce to the following sets (see, e.g., \cite{Kim}):
\begin{equation*}
\begin{split}
&\widetilde{K}_{\infty,\leq y_0}(f, K):=\{y\in\mathbf{R}^s\vert \exists \{x_k\}\subseteq K, f(x_k)\leq y_0, \|x_k\|\to +\infty, f(x_k) \to y, \mbox{ and } \nu(x_k)\to 0 \\
&\mbox{ as } k\to +\infty\},\\
&K_{\infty,\leq y_0}(f, K):=\{y\in\mathbf{R}^s\vert \exists \{x_k\}\subseteq K, f(x_k)\leq y_0, \|x_k\|\to +\infty, f(x_k) \to y, \mbox{ and } \|x_k\|\nu(x_k)\\
&\to 0 \mbox{ as } k\to +\infty\},\\
&\mbox{ and }T_{\infty,\leq y_0}(f, K):=\{y\in\mathbf{R}^s\vert \exists \{x_k\}\subseteq \Gamma(f, K), f(x_k)\leq y_0, \|x_k\|\to +\infty, \mbox{ and }f (x_k) \to y \mbox{ as }\\
& k\to +\infty\}.
\end{split}
\end{equation*}
\end{remark}

\begin{definition}\cite[Definition 4.3]{24LDY3}\label{IPPss}
	Let $I\subseteq \{1,2,\dots,q\}$ be a nonempty index set and $y_0\in (\mathbf{R}\cup\{\infty\})^q$. We say that
	\begin{itemize}
		\item[\rm(i)] $f|_{K}$ satisfies the $I$-Palais-Smale condition at the sublevel $y_0$ if
		$$\widetilde{K}^{I}_{\infty,\leq y_0}(f, K)=\emptyset.$$
		\item[\rm(ii)] $f|_{K}$ satisfies the weak $I$-Palais-Smale condition at the sublevel $y_0$ if
		$$K^{I}_{\infty,\leq y_0}(f, K)=\emptyset.$$
		\item[\rm(iii)] $f|_{K}$ satisfies the $I$-M-tame at the sublevel $y_0$ if
		$$T^{I}_{\infty,\leq y_0}(f, K)=\emptyset.$$
	\end{itemize}
\end{definition}

\begin{remark}\label{0811PS}
	In particular, if $I=\{1,2,\dots,q\}$, then (i)-(iii) of Definition \ref{IPPss} reduce to the Palais-Smale,  weak Palais-Smale  and M-tame condition, that is, $\widetilde{K}_{\infty,\leq y_0}(f, K)=\emptyset$, $K_{\infty,\leq y_0}(f, K)=\emptyset$ and $T_{\infty,\leq y_0}(f, K)=\emptyset$, (see  \cite[Definition 3.3]{Kim}). From the definitions, the properness of the restricted mapping $f|_K$ of $f$ on $K$ with respect to $I$ at sublevel $y_0\in \mathbf{R}^q$ yields
	$\widetilde{K}^{I}_{\infty,\leq y_0}(f, K)=K^{I}_{\infty,\leq y_0}(f, K)=T^{I}_{\infty,\leq y_0}(f, K)=\emptyset$.
The converse does not hold in general, see, e.g. \cite{24LDY3}.
\end{remark}

First, we give the relationships between the relative regularity conditions and properness with respect to some index set as follows.

\begin{theorem}\label{2410233}
Let $\bar x\in K$ and the nonempty index set $I\subseteq\{1,2,\dots,q\}$. Assume that $f$ is $I$-section-bounded from below at $\bar x$. Then the following results are equivalent:
	\begin{itemize}
    \item[(i)] The restricted mapping $f\mid_K$ of $f$ on $K$ is $I$-proper  at the sublevel $f(\bar x)$ ;
    \item[(ii)] There exist a closed set $S\subseteq\mathbf{R}^n$ satisfying with $(K_{\bar x})_\infty\subseteq S_\infty\subseteq K_\infty$ and $\lambda=(\lambda_1,\lambda_2,\dots,\lambda_q)\in \mathbf{R}^q_+\backslash\{0\}$ such that $SOL(S_\infty, \{\sum_{i=1}^{q}\lambda_i f_i\}^{\infty}_d)=\{0\}$;
    \item[(iii)]  There exists a closed set $S\subseteq\mathbf{R}^n$ satisfying with $(K_{\bar x})_\infty\subseteq S_\infty\subseteq K_\infty$ such that $SOL^{w}(S_{\infty}, f^{\infty}_\mathbf{d})=\{0\}$;
    \item[(iv)]  There exists a closed set $S\subseteq\mathbf{R}^n$ satisfying with $(K_{\bar x})_\infty\subseteq S_\infty\subseteq K_\infty$ such that $SOL^{s}(S_{\infty}, f^{\infty}_\mathbf{d})=\{0\}$.
    \end{itemize}
\end{theorem}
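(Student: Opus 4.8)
The plan is to reduce all four statements to the single geometric condition $(K_{\bar x})_\infty=\{0\}$ (equivalently, compactness of the sublevel set $K_{\bar x}$), and to prove that each of (i)--(iv) is equivalent to it. I take this condition as a pivot rather than running a cycle of implications, since it makes the role of the $I$-section-boundedness hypothesis transparent.

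For (i), I would first note that on $K_{\bar x}$ one has $f_i(x)\leq f_i(\bar x)$ for every $i$, so each $f_i$ is bounded above on $K_{\bar x}$; combined with the $I$-section-boundedness, which bounds $f_i$ from below on $K_{\bar x}$ for $i\in I$, the map $f_I$ is bounded on $K_{\bar x}$. Since $I$-properness at the sublevel $f(\bar x)$ requires $\|f_I(x_k)\|\to+\infty$ along every sequence $\{x_k\}\subseteq K_{\bar x}$ with $\|x_k\|\to+\infty$, boundedness of $f_I$ on $K_{\bar x}$ forces such sequences not to exist; hence (i) holds if and only if $K_{\bar x}$ is bounded, i.e. (using that a closed set is bounded iff its asymptotic cone is $\{0\}$) if and only if $(K_{\bar x})_\infty=\{0\}$. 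The implication $(K_{\bar x})_\infty=\{0\}\Rightarrow$ (ii), (iii), (iv) is then immediate on choosing $S=K_{\bar x}$, which is closed and satisfies $(K_{\bar x})_\infty\subseteq S_\infty=\{0\}\subseteq K_\infty$: over the singleton $S_\infty=\{0\}$ each solution set collapses to $\{0\}$, and any $\lambda\in\mathbf{R}^q_+\setminus\{0\}$ serves for (ii).

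For the converses I would exploit the inclusion $(K_{\bar x})_\infty\subseteq K_\infty\cap\{x: f^\infty_{\mathbf{d}}(x)\leq 0\}$ established in the preliminaries, together with its scalarized form: passing the sublevel inequality $\sum_i\lambda_i f_i\leq\sum_i\lambda_i f_i(\bar x)$ to the recession limit yields $\{\sum_{i=1}^q\lambda_i f_i\}^\infty_d\leq 0$ on $(K_{\bar x})_\infty$ for every $\lambda\in\mathbf{R}^q_+\setminus\{0\}$. In case (ii), $SOL(S_\infty,\{\sum_{i=1}^q\lambda_i f_i\}^\infty_d)=\{0\}$ says the minimum of $g^\infty_d:=\{\sum_{i=1}^q\lambda_i f_i\}^\infty_d$ over $S_\infty$ equals $g^\infty_d(0)=0$, so $g^\infty_d\geq 0$ on $S_\infty\supseteq(K_{\bar x})_\infty$; with $g^\infty_d\leq 0$ on $(K_{\bar x})_\infty$ this gives $g^\infty_d\equiv 0$ there, and uniqueness of the minimizer forces $(K_{\bar x})_\infty=\{0\}$. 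Case (iii) follows the same template after using Theorem \ref{2410122} to rewrite $SOL^{w}(S_\infty,f^\infty_{\mathbf{d}})=\{0\}$ as $SOL(S_\infty,(f_i)^\infty_{d_i})=\{0\}$ for each $i$ and running the argument componentwise. Case (iv) I would argue directly: for $v\in(K_{\bar x})_\infty\subseteq S_\infty$ one has $f^\infty_{\mathbf{d}}(v)\in-\mathbf{R}^q_+$, while $0\in SOL^{s}(S_\infty,f^\infty_{\mathbf{d}})$ forbids $f^\infty_{\mathbf{d}}(v)\in-\mathbf{R}^q_+\setminus\{0\}$, so $f^\infty_{\mathbf{d}}(v)=0$; then $v\in SOL^{s}(S_\infty,f^\infty_{\mathbf{d}})=\{0\}$, whence $v=0$.

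The step I expect to be most delicate is case (ii): one must correctly identify $d=\deg\sum_{i=1}^q\lambda_i f_i$ and justify passing the scalar sublevel bound to the recession polynomial $g^\infty_d$ (allowing for a possible degree drop through cancellation), and be careful that the minimum-value and uniqueness argument is carried out over $S_\infty$, not merely over $(K_{\bar x})_\infty$. The other point requiring care is the equivalence in (i): that $I$-properness genuinely collapses to boundedness of $K_{\bar x}$ rests essentially on the $I$-section-boundedness hypothesis, since without it $f_I$ need not be bounded on $K_{\bar x}$ and (i) would no longer force $(K_{\bar x})_\infty=\{0\}$.
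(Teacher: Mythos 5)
Your proof is correct, and while it rests on the same underlying mechanism as the paper's (under $I$-section-boundedness, condition (i) is equivalent to boundedness of $K_{\bar x}$, and once $K_{\bar x}$ is bounded the choice $S=K_{\bar x}$, $S_\infty=\{0\}$ makes (ii)--(iv) trivial), it is organized differently and is more complete where the paper is loose. The paper proves (i) $\Leftrightarrow$ (ii) by two sequence/contradiction arguments, having fixed at the outset a $\lambda$ supported on $I$, and then settles (ii) $\Leftrightarrow$ (iii) $\Leftrightarrow$ (iv) with a one-line appeal to Theorem \ref{2410122}. Your hub-and-spoke reduction to the pivot $(K_{\bar x})_\infty=\{0\}$ buys two things. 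First, your derivation of the scalarized bound $\{\sum_{i=1}^q\lambda_if_i\}^\infty_d\leq 0$ on $(K_{\bar x})_\infty$ works for an \emph{arbitrary} $\lambda\in\mathbf{R}^q_+\setminus\{0\}$ (divide $\sum_i\lambda_if_i(x_k)\leq\sum_i\lambda_if_i(\bar x)$ by $t_k^d$), so your argument accepts whatever witness $\lambda$ statement (ii) supplies; the paper's contradiction argument as written only treats the $\lambda$ supported on $I$ that it fixed beforehand, which does not literally match the existential quantifier in (ii), though it is repairable exactly along your lines. Second, your direct Pareto argument for (iv) --- $f^\infty_{\mathbf{d}}(v)\in-\mathbf{R}^q_+$ on $(K_{\bar x})_\infty$, while $0\in SOL^{s}(S_\infty,f^\infty_{\mathbf{d}})$ forbids values in $-\mathbf{R}^q_+\setminus\{0\}$, forcing $f^\infty_{\mathbf{d}}(v)=0$ and hence $v\in SOL^{s}(S_\infty,f^\infty_{\mathbf{d}})=\{0\}$ --- avoids Theorem \ref{2410122} altogether for this case; that matters because the equivalence involving $SOL^{s}$ in Theorem \ref{2410122} requires $S_\infty\subseteq\{x\mid f^\infty_{\mathbf{d}}(x)\leq 0\}$, a condition the set $S$ in (iv) is \emph{not} assumed to satisfy, so the paper's ``directly by Theorem \ref{2410122}'' conceals a hypothesis mismatch that your argument genuinely closes. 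Two small points to make explicit in a write-up: your case (iv) uses $f^\infty_{\mathbf{d}}(0)=0$, which holds because every $d_i\geq 1$; and in case (ii) the degenerate situation $\deg\sum_i\lambda_if_i=0$ (total cancellation of leading terms) should be dispatched separately --- there the recession polynomial is constant, so $SOL(S_\infty,\cdot)=S_\infty$ and the hypothesis $SOL(S_\infty,\cdot)=\{0\}$ gives $S_\infty=\{0\}$ outright.
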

\begin{proof}
Let $\lambda=(\lambda_1,\lambda_2,\dots,\lambda_q)\in \mathbf{R}^q_+\backslash\{0\}$ with $\lambda_i\neq 0, i\in I$ and $\lambda_j=0, j\notin I$. Since $f$ is $I$-section-bounded from below at $\bar x$,  the polynomial $\sum_{i=1}^{q}\lambda_i f_i$ is bounded from below on $K_{\bar x}$. So, by Proposition \ref{notbounded}, we have $SOL((K_{\bar x})_\infty, \{\sum_{i=1}^{q}\lambda_i f_i\}^{\infty}_d)\neq\emptyset$.

``\emph{(i)} $\Rightarrow$ \emph{(ii)}": Let $S=K_{\bar x}$. We only need prove $S_\infty=(K_{\bar x})_\infty=\{0\}$. Thus, we assert that $K_{\bar x}$ is bounded. Suppose on the contrary that there exists a sequence $\{x_k\}\subseteq K_{\bar x}$ such that $\|x_k\|\to+\infty$ as $k\to +\infty$. Since $f$ on $K$ is $I$-proper at the sublevel $f(\bar x)$, we have $\|f_I(x_k)\|\to+\infty$ as $k\to +\infty$. It follows from $f(x_k)\leq f(\bar x)$ that there exists $i_0\in I$ such that $f_{i_0}(x_k)\to -\infty$ as $k\to +\infty$. Since $f_{i}$ is bounded from below on $K_{\bar x}$ for any $i\in I$,  there exists $c_{i_0}\in \mathbf{R}$ such that $f_{i_0}(x)\geq c_{i_0}$ for all $x\in K_{\bar x}$. Thus, $f_{i_0}(x_k)\geq c_{i_0}$ for all $k$, which is a contradiction. Thus, $SOL(S_\infty, \{\sum_{i=1}^{q}\lambda_i f_i\}^{\infty}_d)=\{0\}$.

``\emph{(i)} $\Leftarrow$ \emph{(ii)}": Suppose on the contrary that the restricted mapping $f\mid_K$ of $f$ on $K$ is not proper with respect to $I$ at the sublevel $f(\bar x)$. Then there exist $y_0\in \mathbf{R}$ and the sequence $\{y_k\}\subseteq K$ satisfying with $f(y_k)\leq f(\bar x)$ and $\|y_k\|\to +\infty$ as $k\to +\infty$ such that $\|f_I(y_k)\|\leq y_0$ for all $k$. It follows that $|f_i(y_k)|\leq y_0$ for each $i\in I$ and all $k$. Without loss of generality, we assume that $\|y_{k}\|\neq 0$ and $\frac{y_{k}}{\|y_{k}\|}\rightarrow v_0\in (K_{\bar x})_\infty$. Since there exists a closed set $S\subseteq\mathbf{R}^n$ such that $K_{\bar x}\subseteq S$, we have $(K_{\bar x})_\infty\subseteq S_{\infty}$. Thus, $v_0\in S_{\infty}\backslash\{0\}$.  Since for any $i\in I$,
 $$0=\lim_{k\to +\infty}\frac{-y_0}{\|y_k\|^{d_i}}\leq\lim_{k\to +\infty}\frac{f_i(y_k)}{\|y_k\|^{d_i}}=(f_{i})^{\infty}_{d_i}(v)\leq \lim_{k\to +\infty}\frac{y_0}{\|y_k\|^{d_i}}=0,$$
	we have $(f_{i})^{\infty}_{d_i}(v_0)=0$ for each $i\in I$. Thus, by $\{\sum_{i=1}^{q}\lambda_i f_i\}^{\infty}_d(v)\geq 0$ for all $v\in S_\infty$ and $\{\sum_{i=1}^{q}\lambda_i f_i\}^{\infty}_d(0)=0$, we have  $v_0\in SOL(S_\infty, \{\sum_{i=1}^{q}\lambda_i f_i\}^{\infty}_d)\setminus\{0\}$, which is a contradiction.

Finally, by Theorem \ref{2410122}, we know that ``\emph{(ii)} $\Leftrightarrow$ \emph{(iii)}$\Leftrightarrow$ \emph{(iv)}", directly. \end{proof}

By Remark \ref{20250227a}, when the index set $I=\{1,2,\dots,q\}$, we know that the $I$-properness at the sublevel $\bar y\in \mathbf{R}^q$ of the restricted mapping $f\mid_K$ of $f$ on $K$ reduces to the properness at the sublevel $\bar y\in \mathbf{R}^q$, and the $I$-section-boundedness from below reduces to the section-boundedness from below. Thus, by the proof of Theorem \ref{2410233}, we have the following result.

\begin{corollary}\label{2410241}
Let $\bar x\in K$. Assume that $f$ is section-bounded from below at $\bar x$. Then the following results are equivalent:
	\begin{itemize}
    \item[(i)] The restricted mapping $f\mid_K$ of $f$ on $K$ is proper at the sublevel $f(\bar x)$ ;
	\item[(ii)] There exist a closed set $S\subseteq\mathbf{R}^n$ satisfying with $(K_{\bar x})_\infty\subseteq S_\infty\subseteq K_\infty$ and $\lambda=(\lambda_1,\lambda_2,\dots,\lambda_q)\in {\rm{ int }} \mathbf{R}^q_+$ such that $SOL(S_\infty, \{\sum_{i=1}^{q}\lambda_i f_i\}^{\infty}_d)=\{0\}$;
    \item[(iii)]  There exists a closed set $S\subseteq\mathbf{R}^n$ satisfying with $(K_{\bar x})_\infty\subseteq S_\infty\subseteq K_\infty$ such that $SOL^{w}(S_{\infty}, f^{\infty}_\mathbf{d})=\{0\}$;
    \item[(iv)]  There exists a closed set $S\subseteq\mathbf{R}^n$ satisfying with $(K_{\bar x})_\infty\subseteq S_\infty\subseteq K_\infty$ such that $SOL^{s}(S_{\infty}, f^{\infty}_\mathbf{d})=\{0\}$.
\end{itemize}
\end{corollary}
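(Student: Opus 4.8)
The plan is to derive Corollary \ref{2410241} as the specialization of Theorem \ref{2410233} to the full index set $I=\{1,2,\dots,q\}$. First I would invoke Remark \ref{20250227a} to translate both hypotheses and statement: for $I=\{1,2,\dots,q\}$ the $I$-section-boundedness from below of $f$ at $\bar x$ collapses to ordinary section-boundedness from below, and the $I$-properness of $f|_K$ at the sublevel $f(\bar x)$ collapses to ordinary properness. Consequently conditions (i), (iii), (iv) are exactly the $I=\{1,2,\dots,q\}$ instances of the corresponding conditions of Theorem \ref{2410233}, and only (ii) is modified, with $\lambda$ now ranging over $\mathrm{int}\,\mathbf{R}^q_+$ rather than over $\mathbf{R}^q_+\setminus\{0\}$. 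This strengthening is forced, not assumed: in the proof of Theorem \ref{2410233} the multiplier is built with $\lambda_i\neq 0$ for $i\in I$ and $\lambda_j=0$ for $j\notin I$, so when $I=\{1,2,\dots,q\}$ there are no vanishing components and every $\lambda_i>0$, i.e. $\lambda\in\mathrm{int}\,\mathbf{R}^q_+$; conversely $\mathrm{int}\,\mathbf{R}^q_+\subseteq\mathbf{R}^q_+\setminus\{0\}$, so any multiplier appearing in (ii) here is admissible in the backward direction of Theorem \ref{2410233}.

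Next I would reproduce the two substantive implications with the canonical witness $S=K_{\bar x}$. For (i) $\Rightarrow$ (ii), fix any $\lambda\in\mathrm{int}\,\mathbf{R}^q_+$; it suffices to show $K_{\bar x}$ is bounded, for then $S_\infty=\{0\}$ and $SOL(S_\infty,\{\sum_{i=1}^q\lambda_i f_i\}^\infty_d)=\{0\}$ is immediate. If $K_{\bar x}$ were unbounded, a sequence $\{x_k\}\subseteq K_{\bar x}$ with $\|x_k\|\to+\infty$ would satisfy $\|f(x_k)\|\to+\infty$ by properness, while $f(x_k)\leq f(\bar x)$ would force $f_{i_0}(x_k)\to-\infty$ for some index, contradicting the lower bound on $f_{i_0}$ over $K_{\bar x}$ supplied by section-boundedness. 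For (ii) $\Rightarrow$ (i) I would argue by contraposition: a failure of properness produces $\{y_k\}\subseteq K$ with $f(y_k)\leq f(\bar x)$, $\|y_k\|\to+\infty$ and $\|f(y_k)\|$ bounded; passing to a limiting direction $v_0=\lim y_k/\|y_k\|\in(K_{\bar x})_\infty\setminus\{0\}\subseteq S_\infty\setminus\{0\}$, the boundedness of each $f_i(y_k)$ yields $(f_i)^\infty_{d_i}(v_0)=0$ and hence $\{\sum_{i=1}^q\lambda_i f_i\}^\infty_d(v_0)=0$, whereas Proposition \ref{2410141} (applicable because section-boundedness makes $\sum_{i=1}^q\lambda_i f_i$ bounded below on $S=K_{\bar x}$) gives $\{\sum_{i=1}^q\lambda_i f_i\}^\infty_d\geq 0$ on $S_\infty$; together these place $v_0$ in $SOL(S_\infty,\{\sum_{i=1}^q\lambda_i f_i\}^\infty_d)\setminus\{0\}$, contradicting (ii).

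Finally, the equivalences (ii) $\Leftrightarrow$ (iii) $\Leftrightarrow$ (iv) are read off from Theorem \ref{2410122}, exactly as in Theorem \ref{2410233}: for the witness $S=K_{\bar x}$ one has $(K_{\bar x})_\infty\subseteq\{x\in\mathbf{R}^n\mid f^\infty_{\mathbf{d}}(x)\leq 0\}$, so the $SOL^s$ characterization (iv) is available alongside (i)--(iii) of that theorem, and the same limiting-direction argument shows each of (iii) and (iv) also implies properness (since $f^\infty_{\mathbf{d}}(v_0)=0=f^\infty_{\mathbf{d}}(0)$ places $v_0$ in $SOL^w(S_\infty,f^\infty_{\mathbf{d}})$ and in $SOL^s(S_\infty,f^\infty_{\mathbf{d}})$ whenever $0$ lies there). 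I do not expect a genuine analytic obstacle; the only real point of care is the bookkeeping check just described, namely verifying that the interior-cone requirement in (ii) is simultaneously produced by the forward implication and harmless in the backward one, which is what distinguishes this corollary from the general index-set statement of Theorem \ref{2410233}.
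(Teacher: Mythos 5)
Your route is exactly the paper's: the paper proves this corollary by invoking Remark \ref{20250227a} (so that $I$-properness and $I$-section-boundedness reduce to properness and section-boundedness when $I=\{1,2,\dots,q\}$) and then pointing to the proof of Theorem \ref{2410233}, whose multiplier has all components positive, i.e.\ lies in $\mathrm{int}\,\mathbf{R}^q_+$; your first-paragraph bookkeeping, your forward implication via boundedness of $K_{\bar x}$, and your closing appeal to Theorem \ref{2410122} all coincide with this.

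One step of your reproduced backward implication (ii)$\Rightarrow$(i) needs repair, however. Condition (ii) is existential in $S$: the witness it supplies need not be $K_{\bar x}$. You derive the nonnegativity of $\{\sum_{i=1}^q\lambda_i f_i\}^{\infty}_d$ from Proposition \ref{2410141} applied on $K_{\bar x}$, which yields it only on $(K_{\bar x})_\infty$; the resulting conclusion $v_0\in SOL((K_{\bar x})_\infty,\cdot)\setminus\{0\}$ therefore contradicts only ``(ii) with the particular candidate witness $K_{\bar x}$,'' not (ii) itself when its witness differs from $K_{\bar x}$. The correct (and shorter) source of the nonnegativity is hypothesis (ii) on its own witness: $SOL(S_\infty,\{\sum_{i=1}^q\lambda_i f_i\}^{\infty}_d)=\{0\}$ means $0$ is a minimizer with value $0$, so the form is $\geq 0$ on all of $S_\infty$; combined with $v_0\in(K_{\bar x})_\infty\subseteq S_\infty$ and the vanishing of the form at $v_0$, this places $v_0\in SOL(S_\infty,\cdot)\setminus\{0\}$, the desired contradiction. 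This is precisely the mechanism you already use for (iii)$\Rightarrow$(i) and (iv)$\Rightarrow$(i) (``whenever $0$ lies there''), so the slip is local; alternatively, your own observation that $\mathrm{int}\,\mathbf{R}^q_+\subseteq\mathbf{R}^q_+\setminus\{0\}$ lets you obtain (ii)$\Rightarrow$(i) by citing Theorem \ref{2410233} as a black box, which makes the flawed reproduction dispensable.
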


\begin{remark}\label{250310a}
As shown in the proof of  Theorem \ref{2410233}, each of the conclusions (ii)-(iv) in Theorem \ref{2410233} is equivalent to one of the following regularity conditions: the relative $I$-$\mathbf{R}^s_+$-zero-regularity,  relatively weak regularity and relatively strong regularity. Thus, we have the following result.
\end{remark}

\begin{corollary}\label{250310b}
Let $\bar x\in K$. Assume that $f$ is $I$-section-bounded from below at $\bar x$. Then there exists a closed set $S\subseteq\mathbf{R}^n$ satisfying with $(K_{\bar x})_\infty\subseteq S_\infty\subseteq K_\infty$ such that the following results are equivalent:
	\begin{itemize}
    \item[(i)] The restricted mapping $f\mid_K$ of $f$ on $K$ is $I$-proper at the sublevel $f(\bar x)$ ;
    \item[(ii)] $f$ is relatively $I$-$\mathbf{R}^q_+$-zero-regular with $S$  on $K$;
	\item[(iii)] $f$ is relatively strongly regular with $S$ on $K$;
    \item[(iv)] $f$ is relatively weakly regular with $S$ on $K$.
    \end{itemize}
\end{corollary}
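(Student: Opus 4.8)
The plan is to derive this corollary directly from Theorem \ref{2410233}, translating its solution-set conditions (ii)--(iv) into the language of the relative regularity conditions by means of the characterizations recorded in Remark \ref{compa} together with the observation of Remark \ref{250310a}. Concretely, I would fix the witness set $S=K_{\bar x}$, so that $S_\infty=(K_{\bar x})_\infty$ and the required inclusions $(K_{\bar x})_\infty\subseteq S_\infty\subseteq K_\infty$ hold trivially, and recall from the preliminaries that $(K_{\bar x})_\infty\subseteq\{x\in\mathbf{R}^n\mid f^{\infty}_{\mathbf{d}}(x)\leq 0\}$. With this choice, relative $I$-$\mathbf{R}^q_+$-zero-regularity with $S$, relative strong regularity with $S$, and relative weak regularity with $S$ become, respectively, the boundedness of $SOL((K_{\bar x})_\infty,\{\sum_{i=1}^q\lambda_i f_i\}^{\infty}_d)$ for some $\lambda$ of support $I$, of $SOL^{w}((K_{\bar x})_\infty,f^{\infty}_{\mathbf{d}})$, and of $SOL^{s}((K_{\bar x})_\infty,f^{\infty}_{\mathbf{d}})$.

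The crucial reduction is to upgrade ``bounded'' to ``equal to $\{0\}$'', i.e.\ to rule out the empty-set alternative permitted by Remark \ref{compa}; this is exactly where the $I$-section-boundedness hypothesis enters. For any $\lambda\in\mathbf{R}^q_+\setminus\{0\}$ with support $I$ the polynomial $\sum_{i=1}^q\lambda_i f_i$ is bounded from below on $K_{\bar x}$, so Corollary \ref{2410235} yields $0\in SOL((K_{\bar x})_\infty,\{\sum_{i=1}^q\lambda_i f_i\}^{\infty}_d)$, and Corollary \ref{weakpro} yields $0\in SOL^{w}((K_{\bar x})_\infty,f^{\infty}_{\mathbf{d}})$. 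Hence these solution sets are nonempty, and boundedness forces each to equal $\{0\}$. This identifies condition (ii) of the corollary with condition (ii) of Theorem \ref{2410233} and condition (iii) with condition (iii) of Theorem \ref{2410233}.

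For condition (iv) I would invoke Theorem \ref{2410122} (equivalently Corollary \ref{gpro}), applicable since $S_\infty=(K_{\bar x})_\infty\subseteq\{f^{\infty}_{\mathbf{d}}\leq 0\}$: over this set $SOL^{s}((K_{\bar x})_\infty,f^{\infty}_{\mathbf{d}})=\{0\}$ is equivalent to $SOL^{w}((K_{\bar x})_\infty,f^{\infty}_{\mathbf{d}})=\{0\}$. Combined with $0\in SOL^{w}$, which makes boundedness of $SOL^{w}$ equivalent to $SOL^{w}=\{0\}$, this ties the relatively weak regularity condition to the relatively strong one, so that all three regularity conditions collapse to the single assertion that the relevant solution sets reduce to $\{0\}$. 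Feeding this back into Theorem \ref{2410233}, whose conditions (ii)--(iv) are each equivalent to the $I$-properness of $f\mid_K$ at the sublevel $f(\bar x)$, then yields the equivalence of (i)--(iv) for the single set $S=K_{\bar x}$.

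The step I expect to be the main obstacle is precisely the exclusion of the empty-set case in Remark \ref{compa}. For the $I$-$\mathbf{R}^q_+$-zero and strong regularity conditions this is clean, because $0$ always belongs to the corresponding solution sets under $I$-section-boundedness. For the relatively weak regularity condition, however, $0$ need not lie in $SOL^{s}((K_{\bar x})_\infty,f^{\infty}_{\mathbf{d}})$ unless $f$ is \emph{fully} section-bounded from below, so the empty case must be controlled indirectly through Theorem \ref{2410122}; keeping this bookkeeping consistent, while ensuring that the single set $S=K_{\bar x}$ simultaneously witnesses all four equivalent conditions, is the delicate part of the argument.
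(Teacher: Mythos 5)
Your overall route is the same as the paper's: specialize to $S=K_{\bar x}$, use the $I$-section-boundedness hypothesis to place $0$ in the relevant solution sets so that ``bounded'' upgrades to ``$=\{0\}$'', and then invoke Theorem \ref{2410233}; this is precisely the content of Remark \ref{250310a}, which is the paper's entire justification. For conditions (i), (ii) and (iii) your argument is sound: for any $\lambda$ supported on $I$ the polynomial $\sum_{i\in I}\lambda_i f_i$ is bounded from below on $K_{\bar x}$, hence $0\in SOL((K_{\bar x})_\infty,\{\sum_{i=1}^{q}\lambda_i f_i\}^{\infty}_d)$, and Corollary \ref{weakpro} gives $0\in SOL^{w}((K_{\bar x})_\infty,f^{\infty}_{\mathbf{d}})$; so, in view of Remark \ref{compa}, relative $I$-$\mathbf{R}^q_+$-zero-regularity and relative strong regularity with $K_{\bar x}$ coincide with conditions (ii) and (iii) of Theorem \ref{2410233}.

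The gap is exactly where you flag it, and your proposed repair does not close it. Relative weak regularity with $S$ means $SOL^{s}(S_\infty,f^{\infty}_{\mathbf{d}})$ is \emph{bounded}, which by Remark \ref{compa} admits the alternative $SOL^{s}(S_\infty,f^{\infty}_{\mathbf{d}})=\emptyset$. Theorem \ref{2410122} (equivalently Corollary \ref{gpro}) only relates the statements ``$=\{0\}$'' to one another; it says nothing about the empty alternative, and under mere $I$-section-boundedness with $I\subsetneqq\{1,2,\dots,q\}$ nothing excludes it: Proposition \ref{notbounded}(ii) only forces some $f_{i_0}$ with $i_0\notin I$ to be unbounded below on $K_{\bar x}$, which is perfectly consistent with the hypotheses. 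Worse, the implication (iv)$\Rightarrow$(i) is genuinely false in this generality, so no bookkeeping can rescue it. Take $q=2$, $K=\mathbf{R}^2$, $f_1(x)=x_1^2$, $f_2(x)=-x_2$, $\bar x=(0,0)$, $I=\{1\}$. Then $K_{\bar x}=\{x\in\mathbf{R}^2\mid x_1=0,\ x_2\geq 0\}$ and $f_1\equiv 0$ there, so $f$ is $I$-section-bounded from below at $\bar x$; yet $x_k=(0,k)$ shows $f|_K$ is not $I$-proper at the sublevel $f(\bar x)$, and one checks that (ii) and (iii) also fail for every admissible $S$. On the other hand $f^{\infty}_{\mathbf{d}}(x)=(x_1^2,-x_2)$, and for every admissible $S$ (i.e.\ $(K_{\bar x})_\infty\subseteq S_\infty\subseteq K_\infty$) each $v\in S_\infty$ is dominated by $(0,\max(v_2,0)+1)\in(K_{\bar x})_\infty\subseteq S_\infty$, whence $SOL^{s}(S_\infty,f^{\infty}_{\mathbf{d}})=\emptyset$: thus $f$ is relatively weakly regular with every admissible $S$ while (i)--(iii) all fail, so no choice of $S$ makes (iv) equivalent to the others. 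Note that this lacuna is inherited from the paper itself: Remark \ref{250310a} glosses over the same point, and the statement is correct only if (iv) is read as $SOL^{s}(S_\infty,f^{\infty}_{\mathbf{d}})=\{0\}$, i.e.\ as condition (iv) of Theorem \ref{2410233}, or if full section-boundedness is assumed, as in Corollary \ref{2410241}.
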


\begin{remark}
When $f$ is a convex polynomial mapping on $K$ and $K$ is a convex set, by the proof of  Theorem \ref{2410233} and Corollary  \ref{250310b}, we have the following result.
\end{remark}


\begin{corollary}\label{2410232}
Assume that $f$ is a convex polynomial mapping on $K$ and $K$ is a convex set. Let $\bar x\in K$ and $S=K_\infty\bigcap\{x\in \mathbf{R}^n\mid f(x)\leq f(\bar x)\}_\infty$. If $f$ is section-bounded from below at $\bar x$, then the following results are equivalent:
	\begin{itemize}
    \item[(i)] The restricted mapping $f\mid_K$ of $f$ on $K$ is proper at the sublevel $f(\bar x)$ ;
    \item[(ii)] $f$ is relatively $\mathbf{R}^q_+$-zero-regular with $S$  on $K$;
	\item[(iii)] $f$ is relatively strongly regular with $S$ on $K$;
    \item[(iv)] $f$ is relatively weakly regular with $S$ on $K$.
    \end{itemize}
\end{corollary}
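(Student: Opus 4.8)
The plan is to reduce the statement to the already-established equivalences of Theorem \ref{2410233} and Corollary \ref{2410241}, the only new ingredient being the structural simplification that convexity forces on the prescribed set $S$.

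First I would record the key identity $S_\infty=(K_{\bar x})_\infty$. The set $S=K_\infty\bigcap\{x\in\mathbf{R}^n\mid f(x)\leq f(\bar x)\}_\infty$ is an intersection of two closed cones, hence is itself a closed cone, so $S_\infty=S$. Combining this with the convex identity noted in the preliminaries --- when $f$ is convex and $K$ is convex the inclusion $(K_{\bar x})_\infty\subseteq K_\infty\bigcap\{x\in\mathbf{R}^n\mid f(x)\leq f(\bar x)\}_\infty$ becomes an equality --- gives $S_\infty=(K_{\bar x})_\infty$. In particular $(K_{\bar x})_\infty\subseteq S_\infty\subseteq K_\infty$, so each relative regularity notion in (ii)--(iv) is admissible with this specific $S$; crucially, each of these notions depends on $S$ only through $S_\infty$.

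Next I would settle (ii)$\Leftrightarrow$(iii)$\Leftrightarrow$(iv). Since $(K_{\bar x})_\infty\subseteq\{x\in\mathbf{R}^n\mid f^\infty_\mathbf{d}(x)\leq 0\}$ always holds and $f$ is section-bounded from below at $\bar x$, Corollary \ref{weakpro} yields $0\in SOL^{s}(S_\infty,f^\infty_\mathbf{d})$; thus every solution set in play is nonempty, and the boundedness requirements in the three regularity definitions all collapse to the form ``$=\{0\}$''. Using $S_\infty=(K_{\bar x})_\infty$ I can then invoke Corollary \ref{gpro} (equivalently Theorem \ref{2410122}), which makes $SOL^{s}(S_\infty,f^\infty_\mathbf{d})=\{0\}$, $SOL^{w}(S_\infty,f^\infty_\mathbf{d})=\{0\}$, and $SOL(S_\infty,\{\sum_{i=1}^{q}\lambda_i f_i\}^\infty_d)=\{0\}$ for all $\lambda\in\mathbf{R}^q_+\setminus\{0\}$ mutually equivalent, yielding the three-way equivalence.

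Finally I would close the loop through (i) by appealing to Corollary \ref{2410241}, the $I=\{1,2,\dots,q\}$ specialization of Theorem \ref{2410233}. One direction is immediate: the proof of that theorem shows that properness of $f\mid_K$ at the sublevel $f(\bar x)$ forces $(K_{\bar x})_\infty=\{0\}$, whence $S_\infty=\{0\}$ and $SOL(S_\infty,\{\sum_{i=1}^{q}\lambda_i f_i\}^\infty_d)=\{0\}$. Conversely, regularity with our specific $S$ supplies exactly the witness required by the existential direction of Theorem \ref{2410233}, so it returns properness. I expect the main obstacle to be bookkeeping rather than analysis: the general results are phrased with an existential ``there exists a closed set $S$'', whereas the present corollary pins down one particular $S$; the crux is to argue --- via $S_\infty=(K_{\bar x})_\infty$ together with the fact that every relative regularity condition sees $S$ only through $S_\infty$ --- that this fixed choice is a legitimate witness in both directions and therefore inherits all four equivalences.
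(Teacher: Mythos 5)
Your overall strategy coincides with the paper's: the paper obtains this corollary precisely by specializing Theorem~\ref{2410233} and Corollary~\ref{250310b} to $I=\{1,2,\dots,q\}$, the only new ingredient being that convexity forces $(K_{\bar x})_\infty=K_\infty\bigcap\{x\in\mathbf{R}^n\mid f(x)\leq f(\bar x)\}_\infty$, so that the prescribed set satisfies $S=S_\infty=(K_{\bar x})_\infty$ and can serve as the witness in the existential statements. Your structural observations ($S$ is a closed cone, hence $S_\infty=S$; the convex identity; the regularity notions depend on $S$ only through $S_\infty$; nonemptiness of the solution sets via Corollary~\ref{weakpro} and, for the weighted sums, Corollary~\ref{2410235}) are all correct and are exactly the facts the paper relies on.

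There is, however, one step that does not hold as you justify it. In your second step you claim that (ii)$\Leftrightarrow$(iii)$\Leftrightarrow$(iv) follows directly from Corollary~\ref{gpro} (Theorem~\ref{2410122}). Those results equate $SOL^{s}(S_\infty,f^\infty_{\mathbf{d}})=\{0\}$ and $SOL^{w}(S_\infty,f^\infty_{\mathbf{d}})=\{0\}$ with the condition $SOL(S_\infty,\{\sum_{i=1}^{q}\lambda_i f_i\}^{\infty}_d)=\{0\}$ holding \emph{for all} $\lambda\in\mathbf{R}^q_+\setminus\{0\}$. But condition (ii), relative $\mathbf{R}^q_+$-zero-regularity, only asserts the \emph{existence} of one $\lambda\in{\rm int}\,\mathbf{R}^q_+$ with $SOL(S_\infty,\{\sum_{i=1}^{q}\lambda_i f_i\}^{\infty}_d)$ bounded. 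The existential condition is strictly weaker in general: the paper's own example after Theorem~\ref{2410122} (with $f_1=x_1^2x_2+x_1$, $f_2=-x_1^2x_2+x_2$ and $\lambda=(1,1)$) shows that the weighted-sum solution set can equal $\{0\}$ for one $\lambda$ while all of (i)--(iv) of that theorem fail, the culprit being cancellation of leading terms in $\sum_i\lambda_i f_i$. So the implication (ii)$\Rightarrow$(iii)/(iv) cannot be extracted from the cited equivalences alone. The repair is already contained in your third step: prove (ii)$\Rightarrow$(i) first (the specific $S$ is a legitimate witness for the existential direction of Theorem~\ref{2410233}, and interior $\lambda$ has full support, so $I$-properness is properness), and then (i) together with section-boundedness forces $K_{\bar x}$ to be bounded, whence $S_\infty=(K_{\bar x})_\infty=\{0\}$ and (iii), (iv) hold trivially; combined with the unproblematic directions (iii)/(iv)$\Rightarrow$(ii), this closes the four-way equivalence. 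Routing everything through (i) in this way is exactly how the proof of Theorem~\ref{2410233}, which the paper invokes for this corollary, is organized.
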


In what follows, we give the relationships between  the relative regularity conditions, $I$-Palais-Smale condition, weak $I$-Palais-Smale condition, $I$-M-tameness, and $I$-properness condition under the $I$-section-boundedness from below condition.

\begin{corollary}\label{relationship}
    Let $\bar x\in K$ and the nonempty index set $I\subseteq\{1,2,\dots,q\}$. Assume that $f$ is  $I$-section-bounded from below at $\bar x$. Then there exists a closed set $S\subseteq\mathbf{R}^n$ satisfying with $(K_{\bar x})_\infty\subseteq S_\infty\subseteq K_\infty$ such that the following assertions are equivalent:
    \begin{itemize}
    \item[(i)] $f|_{K}$ is $I$-proper at the sublevel $f(\bar x)$;
    \item[(ii)] $f$ is relatively $I$-$\mathbf{R}^q_+$-zero-regular with $S$ on $K$;
	\item[(iii)] $f$ is relatively strongly regular with $S$ on $K$;
    \item[(iv)] $f$ is relatively weakly regular with $S$ on $K$;
	\item[(v)] $f|_{K}$ satisfies the $I$-Palais-Smale condition at the sublevel $f(\bar x)$;
	\item[(vi)] $f|_{K}$ satisfies the weak $I$-Palais-Smale condition  at the sublevel $f(\bar x)$;
    \item[(vii)] $f|_{K}$ satisfies $I$-M-tame condition at the sublevel $f(\bar x)$.
    \end{itemize}
Moreover, the set $\{x\in K|f(x)\leq f(\bar x)\}$ and the section $\lbrack f(K) \rbrack_{f(\bar x)}$ are compact if any of the conditions (i)-(vii) is fulfilled.
\end{corollary}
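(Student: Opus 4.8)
The plan is to fix $S=K_{\bar x}$ (which is closed, being the intersection of the closed set $K$ with the polynomial sublevel sets $\{f_i\le f_i(\bar x)\}$) and to collapse all seven assertions onto the single statement that $K_{\bar x}$ is bounded. The first thing I would record is purely structural: because $f$ is $I$-section-bounded from below at $\bar x$ and $f_i(x)\le f_i(\bar x)$ holds for every $x\in K_{\bar x}$ and $i\in I$, the sub-block $f_I$ is bounded, both from below and from above, on $K_{\bar x}$. Feeding $S=K_{\bar x}$ into Corollary~\ref{250310b} already yields (i)$\Leftrightarrow$(ii)$\Leftrightarrow$(iii)$\Leftrightarrow$(iv); moreover the argument inside Theorem~\ref{2410233} shows that $I$-properness at the sublevel $f(\bar x)$ forces $(K_{\bar x})_\infty=\{0\}$, while conversely a bounded $K_{\bar x}$ admits no sequence in $K$ with $f(x_k)\le f(\bar x)$ and $\|x_k\|\to+\infty$, so $I$-properness then holds vacuously. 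This pins down the central reduction: (i) is equivalent to boundedness of $K_{\bar x}$, and since $K_{\bar x}$ is closed this is compactness, whence $\{x\in K\mid f(x)\le f(\bar x)\}=K_{\bar x}$ and its continuous image $[f(K)]_{f(\bar x)}=f(K_{\bar x})$ are compact, which settles the final clause.

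Next I would clear the forward implications cheaply. If $K_{\bar x}$ is bounded, then each of $\widetilde{K}^{I}_{\infty,\le f(\bar x)}(f,K)$, $K^{I}_{\infty,\le f(\bar x)}(f,K)$ and $T^{I}_{\infty,\le f(\bar x)}(f,K)$ is empty, simply because their defining sequences all demand $\|x_k\|\to+\infty$ while $f(x_k)\le f(\bar x)$, i.e.\ an unbounded sequence living inside $K_{\bar x}$; this delivers (i)$\Rightarrow$(v), (i)$\Rightarrow$(vi) and (i)$\Rightarrow$(vii), consistently with Remark~\ref{0811PS}. The implication (v)$\Rightarrow$(vi) is then free from the inclusion $K^{I}_{\infty,\le f(\bar x)}(f,K)\subseteq\widetilde{K}^{I}_{\infty,\le f(\bar x)}(f,K)$ noted earlier.

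The real content is the return trip, which I would organize by contraposition: assuming $K_{\bar x}$ unbounded I must produce points in both $T^{I}_{\infty,\le f(\bar x)}(f,K)$ and $K^{I}_{\infty,\le f(\bar x)}(f,K)$. For the $I$-M-tame side (vii)$\Rightarrow$(i) I would scalarize by $g=\sum_{i\in I}f_i$, pick radii $R_k=\|x^{(k)}\|\to+\infty$ with $x^{(k)}\in K_{\bar x}$, and minimize $g$ over the compact slice $K_{\bar x}\cap\{\|x\|=R_k\}$. A Fritz--John condition at the minimizer $x_k$ absorbs the multipliers of the active sublevel constraints into nonnegative coefficients of the $\nabla f_i(x_k)$ and contributes a radial term $\mu_k x_k$ from the sphere, so after normalization $x_k\in\Gamma(f,K)$; since $f_I$ is bounded on $K_{\bar x}$, a subsequence gives $f_I(x_k)\to y$, hence $y\in T^{I}_{\infty,\le f(\bar x)}(f,K)$ and (vii) fails.

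The main obstacle is the weak $I$-Palais--Smale side (vi)$\Rightarrow$(i), which needs a sequence with the sharper Rabier decay $\|x_k\|\,\nu(x_k)\to 0$. The tangency points above only furnish a bound of the form $\nu(x_k)\le C\,|\mu_k|\,\|x_k\|$, so $\|x_k\|\nu(x_k)\le C\,|\mu_k|R_k^2$, and I would have to force the radial multiplier $\mu_k$ to vanish fast; here I intend to exploit that $g$ is bounded (not merely below) on $K_{\bar x}$, so the sphere-minimum value stays in a fixed band and radii can be selected along which $\mu_k\to0$ at the required rate. Since $K$ is an arbitrary closed set, I cannot invoke the semi-algebraic regularity-at-infinity that would hand me $T^{I}\subseteq K^{I}$ and hence (vi)$\Rightarrow$(vii)$\Rightarrow$(i) for free, so this control of the Rabier term for a general $K$ is the delicate point, and is exactly where I would lean on the companion estimates of \cite{24LDY3}. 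With (vi)$\Rightarrow$(i) and (vii)$\Rightarrow$(i) secured, the chain (i)$\Rightarrow$(v)$\Rightarrow$(vi)$\Rightarrow$(i) together with (i)$\Rightarrow$(vii)$\Rightarrow$(i) fuses all seven statements into a single equivalence.
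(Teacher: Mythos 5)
Your reduction to $S=K_{\bar x}$ and the treatment of the ``easy'' parts are correct and consistent with the paper: the equivalence $(i)\Leftrightarrow(ii)\Leftrightarrow(iii)\Leftrightarrow(iv)$ does come from Theorem \ref{2410233} (Corollary \ref{250310b}), with the proof of Theorem \ref{2410233} showing that $(i)$ is the same as boundedness of $K_{\bar x}$; boundedness of $K_{\bar x}$ makes $\widetilde{K}^{I}_{\infty,\leq f(\bar x)}(f,K)$, $K^{I}_{\infty,\leq f(\bar x)}(f,K)$ and $T^{I}_{\infty,\leq f(\bar x)}(f,K)$ empty, giving $(i)\Rightarrow(v),(vi),(vii)$; and $[f(K)]_{f(\bar x)}=f(K_{\bar x})$ yields the compactness clause. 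However, your proposal has a genuine gap precisely where the real analytic content lies, namely the return implications $(vi)\Rightarrow(i)$ and $(vii)\Rightarrow(i)$. For $(vi)\Rightarrow(i)$ you concede that you cannot control the Rabier decay $\|x_k\|\nu(x_k)\to 0$ for a general closed $K$ and state that you ``intend to exploit'' boundedness and ``would lean on the companion estimates of \cite{24LDY3}''; that is a declared intention, not an argument, and without it the seven conditions do not fuse into one equivalence. For $(vii)\Rightarrow(i)$, your Fritz--John step is also not justified at this level of generality: to place the constrained minimizer of $\sum_{i\in I}f_i$ over $K\cap\{f\leq f(\bar x)\}\cap\{\|x\|=R_k\}$ into the tangency variety $\Gamma(f,K)$ you need a normal-cone intersection rule of the type $N(x;A\cap B)\subseteq N(x;A)+N(x;B)$, which for an arbitrary closed set $K$ (neither convex nor semi-algebraic) holds only under qualification conditions you have not verified.

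The paper does not attempt any of this. Its proof is a two-line combination of citations: the equivalences $(iii)\Leftrightarrow(iv)\Leftrightarrow(v)\Leftrightarrow(vi)$ (and implicitly the $I$-M-tameness statement and the compactness clause) are imported wholesale from the companion result \cite[Theorem 4.1]{24LDY3}, proved there under exactly the present hypothesis of $I$-section-boundedness from below, while $(i)\Leftrightarrow(ii)\Leftrightarrow(iii)$ comes from Theorem \ref{2410233}. So the efficient repair of your proposal is structural rather than technical: keep your use of Theorem \ref{2410233} for $(i)$--$(iv)$ and your vacuous-emptiness argument for the forward directions if you wish, but replace the attempted Fritz--John/Rabier constructions by an explicit appeal to \cite[Theorem 4.1]{24LDY3} for the implications involving $(v)$, $(vi)$, $(vii)$; as written, those directions remain unproved in your text.
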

\begin{proof}
	$\lbrack(iii)\Leftrightarrow (iv) \Leftrightarrow (v)\Leftrightarrow (vi)\rbrack$ follows from \cite[Theorem 4.1]{24LDY3}. $\lbrack (i)\Leftrightarrow (ii)\Leftrightarrow (iii)\rbrack$ follows from by Theorem \ref{2410233}.\end{proof}

When the index set $I=\{1,2,\dots,q\}$, we have the following result by \cite[Theorem 3.1]{Kim} and Corollary \ref{relationship}.

\begin{corollary}\label{relationship1}
    Assume that $f$ is  section-bounded from below on $K$ and $\bar x\in K$. Then there exists a closed set $S\subseteq\mathbf{R}^n$ satisfying with $(K_{\bar x})_\infty\subseteq S_\infty\subseteq K_\infty$ such that the following assertions are equivalent:
    \begin{itemize}
    \item[(i)] $f|_{K}$ is proper at the sublevel $f(\bar x)$;
    \item[(ii)] $f$ is relatively $\mathbf{R}^q_+$-zero-regular with $S$ on $K$;
	\item[(iii)] $f$ is relatively strongly regular with $S$ on $K$;
    \item[(iv)] $f$ is relatively weakly regular with $S$ on $K$;
	\item[(v)] $f|_{K}$ satisfies the Palais-Smale condition at the sublevel $f(\bar x)$;
	\item[(vi)] $f|_{K}$ satisfies the weak Palais-Smale condition  at the sublevel $f(\bar x)$;
    \item[(vii)] $f|_{K}$ satisfies M-tame condition at the sublevel $f(\bar x)$.
    \end{itemize}
Moreover, the set $\{x\in K|f(x)\leq f(\bar x)\}$ and the section $\lbrack f(K) \rbrack_{f(\bar x)}$ are compact if any of the conditions (i)-(vii) is fulfilled.
\end{corollary}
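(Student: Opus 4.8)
The plan is to read this statement as the specialization of Corollary \ref{relationship} to the full index set $I=\{1,2,\dots,q\}$, and to spell out the dictionary that makes every indexed notion collapse onto its unindexed counterpart. First I would record these reductions: by Remark \ref{20250227a} the $I$-section-boundedness from below becomes ordinary section-boundedness from below; by the definitions of Section 4 together with Remark \ref{0811PS}, the $I$-properness, $I$-Palais-Smale, weak $I$-Palais-Smale and $I$-M-tame conditions reduce respectively to the properness, Palais-Smale, weak Palais-Smale and M-tame conditions; and the relative $I$-$\mathbf{R}^q_+$-zero-regularity reduces to the relative $\mathbf{R}^q_+$-zero-regularity. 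The relative strong and weak regularity conditions (iii)--(iv) carry no index and are unchanged. So the seven conditions here are exactly the specializations of (i)--(vii) of Corollary \ref{relationship}.

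With these identifications in place, I would invoke Corollary \ref{relationship} directly: under the (now unindexed) section-boundedness from below hypothesis it produces a closed set $S$ with $(K_{\bar x})_\infty\subseteq S_\infty\subseteq K_\infty$ for which the analogues of conditions (i)--(vii) are mutually equivalent, and it also delivers the compactness conclusion for $\{x\in K\mid f(x)\leq f(\bar x)\}$ and for $\lbrack f(K)\rbrack_{f(\bar x)}$ whenever any of them holds. The one point that deserves explicit attention is the M-tame condition (vii): the equivalences written out for the indexed corollary chain (iii)--(vi) through \cite[Theorem 4.1]{24LDY3} and (i)--(iii) through Theorem \ref{2410233}, and they do not visibly pass through M-tameness. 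This is precisely why \cite[Theorem 3.1]{Kim} is cited alongside Corollary \ref{relationship}: for $I=\{1,2,\dots,q\}$ it places the M-tame condition on the same footing as the Palais-Smale and weak Palais-Smale conditions, thereby folding (vii) into the common equivalence class.

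For the compactness tail I would either inherit it verbatim from Corollary \ref{relationship}, or, if a self-contained argument is preferred, reason as follows. Suppose condition (i) holds, so $f\mid_K$ is proper at the sublevel $f(\bar x)$, and suppose for contradiction that $\{x_k\}\subseteq K_{\bar x}$ escapes, $\|x_k\|\to+\infty$. Properness forces $\|f(x_k)\|\to+\infty$, yet $f(x_k)\leq f(\bar x)$ bounds every component from above while the section-boundedness from below supplies $r\in\mathbf{R}^q$ with $f_i(x_k)\geq r_i$ for all $i$ and all $k$, so $\|f(x_k)\|$ stays bounded, a contradiction. Hence $K_{\bar x}=\{x\in K\mid f(x)\leq f(\bar x)\}$ is bounded, and being closed (as $K$ is closed and $f$ continuous) it is compact; finally $\lbrack f(K)\rbrack_{f(\bar x)}=f(K_{\bar x})$ is compact as a continuous image of a compact set.

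I do not anticipate a genuine obstacle, since the argument is essentially a transcription of Corollary \ref{relationship} under the substitution $I=\{1,2,\dots,q\}$. The only step requiring care is confirming that the M-tameness equivalence is in fact available in this full-index setting, which is exactly the role played by \cite[Theorem 3.1]{Kim}.
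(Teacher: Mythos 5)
Your proposal matches the paper's own proof, which obtains this corollary precisely by specializing Corollary \ref{relationship} to the full index set $I=\{1,2,\dots,q\}$ (using the reductions of Remarks \ref{20250227a} and \ref{0811PS}) and invoking \cite[Theorem 3.1]{Kim}; your observation that the Kim citation is exactly what folds the M-tame condition (vii) into the equivalence class is the correct reading of its role. Your additional self-contained argument for compactness of $\{x\in K\mid f(x)\leq f(\bar x)\}$ and $\lbrack f(K)\rbrack_{f(\bar x)}$ is sound but redundant, since Corollary \ref{relationship} already delivers that conclusion.
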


\section{Existence results of efficient solutions for ${\rm{PVOP}} (K, f)$}

In this section, under the relative regularity and non-regularity conditions, we shall study nonemptiness of solution sets of ${\rm{PVOP}} (K, f)$ respectively.

\subsection{Existence for ${\rm{PVOP}} (K, f)$ under the relative regularity conditions}
In this subsection, we investigate the existence of the efficient solutions for  polynomial vector optimization problems on a nonempty closed set under the relative regularity conditions without any convexity and compactness assumptions. First, we obtain equivalent characterizations of the sublevel set as follows.

\begin{proposition}\label{250107a}
Let $\bar x\in K$, $\lambda=(\lambda_1,\lambda_2,\dots,\lambda_q)\in \mathbf{R}^q_+\backslash\{0\}$. Then $K_{\bar x}$ is bounded if and only if there exists a closed set $S\subseteq\mathbf{R}^n$ satisfying with $(K_{\bar x})_\infty\subseteq S_\infty\subseteq K_\infty$ such that the one of the following conditions hold:
\begin{itemize}
\item[(i)] $SOL(S_\infty, \{\sum_{i=1}^{q}\lambda_i f_i\}^{\infty}_d)=\{0\}$;
\item[(ii)] $SOL^{s}(S_\infty, f^{\infty}_\mathbf{d})=\{0\}$;
\item[(iii)] $SOL^{w}(S_\infty, f^{\infty}_\mathbf{d})=\{0\}$.
\end{itemize}
\end{proposition}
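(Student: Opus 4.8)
The plan is to treat the two implications separately, taking $S=K_{\bar x}$ itself for the easy necessity direction and arguing by contradiction through the asymptotic cone for the sufficiency direction.

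First I would prove necessity. If $K_{\bar x}$ is bounded then, since $K$ is closed and the $f_i$ are continuous, $K_{\bar x}=K\cap\bigcap_{i=1}^q\{x\mid f_i(x)\le f_i(\bar x)\}$ is a nonempty closed set that I may take as $S$; boundedness forces $(K_{\bar x})_\infty=\{0\}$, so $S_\infty=\{0\}$ and the required inclusion $(K_{\bar x})_\infty\subseteq S_\infty\subseteq K_\infty$ is immediate. Over the singleton $S_\infty=\{0\}$ the point $0$ is vacuously a minimizer, a Pareto efficient and a weak Pareto efficient point, so each of $SOL(S_\infty,\{\sum_{i=1}^q\lambda_i f_i\}^\infty_d)$, $SOL^{s}(S_\infty,f^\infty_\mathbf{d})$ and $SOL^{w}(S_\infty,f^\infty_\mathbf{d})$ equals $\{0\}$; in particular one of (i)--(iii) holds.

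For sufficiency I would assume such an $S$ exists with one of (i)--(iii) valid and suppose, for contradiction, that $(K_{\bar x})_\infty\ne\{0\}$, fixing $v_0\in(K_{\bar x})_\infty\setminus\{0\}\subseteq S_\infty\setminus\{0\}$. Representing $v_0=\lim_k x_k/t_k$ with $x_k\in K_{\bar x}$ and $t_k\to+\infty$, dividing $f_i(x_k)\le f_i(\bar x)$ by $t_k^{d_i}$ and letting $k\to+\infty$ gives $(f_i)^\infty_{d_i}(v_0)\le 0$ for all $i$, i.e. $v_0\in\{x\in\mathbf{R}^n\mid f^\infty_\mathbf{d}(x)\le 0\}$ (the inclusion $(K_{\bar x})_\infty\subseteq\{x\in\mathbf{R}^n\mid f^\infty_\mathbf{d}(x)\le 0\}$ already recorded in Section 2). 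The aim is then to show $v_0$ lies in whichever solution set appears, contradicting that it equals $\{0\}$. For (iii): $0\in SOL^{w}(S_\infty,f^\infty_\mathbf{d})$ means no $x\in S_\infty$ has all $(f_i)^\infty_{d_i}(x)<0$; were $v_0\notin SOL^{w}$, some $x\in S_\infty$ would satisfy $(f_i)^\infty_{d_i}(x)<(f_i)^\infty_{d_i}(v_0)\le 0$ for every $i$, a contradiction. For (ii): $0\in SOL^{s}(S_\infty,f^\infty_\mathbf{d})$ together with $f^\infty_\mathbf{d}(v_0)\le 0$ forces $f^\infty_\mathbf{d}(v_0)=0=f^\infty_\mathbf{d}(0)$, so the Pareto efficiency of $0$ transfers verbatim to $v_0$, giving $v_0\in SOL^{s}$.

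The step I expect to be the main obstacle is case (i), because $\{\sum_{i=1}^q\lambda_i f_i\}^\infty_d$ is the leading homogeneous part of $g:=\sum_{i=1}^q\lambda_i f_i$ and in general does \emph{not} equal $\sum_{i=1}^q\lambda_i(f_i)^\infty_{d_i}$ once the degrees $d_i$ differ or the top terms cancel, so the naive sign bookkeeping fails. I would sidestep this by working with $g$ directly: $0\in SOL(S_\infty,g^\infty_d)$ yields $g^\infty_d\ge 0$ on $S_\infty$, while $\lambda\in\mathbf{R}^q_+$ gives $g\le g(\bar x)$ on $K_{\bar x}$, so dividing $g(x_k)\le g(\bar x)$ by $t_k^{d}$ (with $d=\deg g$) and passing to the limit gives $g^\infty_d(v_0)\le 0$; hence $g^\infty_d(v_0)=0$ is the minimal value and $v_0\in SOL(S_\infty,g^\infty_d)$. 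In all three cases $v_0\ne 0$ contradicts the standing hypothesis, so $(K_{\bar x})_\infty=\{0\}$ and $K_{\bar x}$ is bounded.
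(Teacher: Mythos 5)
Your proof is correct, and it takes a route that is genuinely more self-contained than the paper's. The paper proves only the implication ``(i) $\Rightarrow$ bounded'' directly and disposes of (ii) and (iii) by invoking its Theorem on the equivalence of the regularity conditions (Theorem \ref{2410122}); in the case (i) argument it then rewrites $\{\sum_{i=1}^{q}\lambda_i f_i\}^{\infty}_d$ as $\sum_{i\in I}\lambda_i (f_i)^{\infty}_{d_i}$ (the sub-sum of top-degree leading terms) and combines this with the componentwise limits $(f_i)^{\infty}_{d_i}(v_0)\leq 0$. You instead handle all three cases by direct contradiction arguments, and this buys you two things. First, for (i) you work with the aggregated polynomial $g=\sum_{i=1}^{q}\lambda_i f_i$ and pass to the limit in $g(x_k)\leq g(\bar x)$ divided by $t_k^{d}$; this bypasses exactly the decomposition identity you flagged, which indeed can fail when the top-degree terms cancel (in which case $d<\max_{j:\lambda_j\neq 0}d_j$ and the paper's identity is no longer valid as written). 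Second, for (ii) your ``Pareto efficiency of $0$ transfers to $v_0$ once $f^{\infty}_{\mathbf d}(v_0)=0$'' argument works under the stated hypothesis $(K_{\bar x})_\infty\subseteq S_\infty\subseteq K_\infty$ alone, whereas routing (ii) through Theorem \ref{2410122} formally requires the extra containment $S_\infty\subseteq\{x\mid f^{\infty}_{\mathbf d}(x)\leq 0\}$, which the proposition does not assume; your version therefore closes a small imprecision in the paper's own reduction. The only caveat worth recording is the degenerate situation where cancellation drives $d=\deg g$ to $0$ (so that $g^{\infty}_d$ is a nonzero constant and your division step no longer yields $g^{\infty}_d(v_0)\leq 0$); there the conclusion is immediate anyway, since a constant objective makes every point of $S_\infty$ a minimizer, so condition (i) forces $S_\infty=\{0\}$ and hence $(K_{\bar x})_\infty=\{0\}$. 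The necessity direction (take $S=K_{\bar x}$, so $S_\infty=\{0\}$ and all three solution sets are trivially $\{0\}$) coincides with the paper's.
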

\begin{proof}
By Theorem \ref{2410122}, we only prove that $K_{\bar x}$ is bounded if and only if there exists a closed set $S\subseteq\mathbf{R}^n$ satisfying with $(K_{\bar x})_\infty\subseteq S_\infty\subseteq K_\infty$ such that the conclusion \emph{(i)} holds.

``$\Leftarrow$": Since there exist a closed set $S\subseteq\mathbf{R}^n$ satisfying with $(K_{\bar x})_\infty\subseteq S_\infty\subseteq K_\infty$ and $\lambda=(\lambda_1,\lambda_2,\dots,\lambda_q)\in \mathbf{R}^q_+\backslash\{0\}$ such that $SOL(S_\infty, \{\sum_{i=1}^{q}\lambda_i f_i\}^{\infty}_d)=\{0\}$, we have $\{\sum_{i=1}^{q}\lambda_i f_i\}^{\infty}_d(v)\geq \{\sum_{i=1}^{q}\lambda_i f_i\}^{\infty}_d(0)=0$ for all $v\in S_\infty$. Let $I=\{i\in\{1,2,\dots,q\}|\deg f_i=\max_{j\in J}\{deg f_j\}$ where $J=\{j: \lambda_j\neq 0, j\in \{1,2,\dots,q\}\} \}$. Then $\{\sum_{i=1}^{q}\lambda_i f_i\}^{\infty}_d=\sum_{i\in I}\lambda_i (f_i)^\infty_{d_i}$. We prove that the set $K_{\bar x}=\{x\in K: f(x)\leq f(\bar x)\}$ is bounded. Suppose on the contrary that there exists a consequence $\{x_k\}\subseteq K_{\bar x}$ such that $\|x_k\|\to +\infty$ as $k\to +\infty$.  Without loss of generality, we can assume that $\|x_{k}\|\neq 0$ and $\frac{x_{k}}{\|x_{k}\|}\rightarrow v_0$. It follows from $\{x_k\}\subseteq K_{\bar x}$ that $v_0\in (K_{\bar x})_\infty\backslash\{0\}$. Since $\{x_k\}\subseteq K_{\bar x}$, we have $f_i(x_k)\leq f_i(\bar x), i\in \{1,2,\dots,q\}$. Dividing the both sides of these inequalities by $\|x_k\|^{d_{i}}$ with $d_{i}=\deg f_{i}$ and then letting $k\rightarrow +\infty$, we get
\begin{equation}\label{250107c}
(f_i)^\infty_{d_i}(v_0)\leq 0, i\in \{1,2,\dots,q\}.
\end{equation}
Since $\{\sum_{i=1}^{q}\lambda_i f_i\}^{\infty}_d(x_k)=\sum_{i\in I}\lambda_i (f_i)^\infty_{d_i}(x_k)$, we have that
$$\frac{1}{\|x_k\|^d}\{\sum_{i=1}^{q}\lambda_i f_i\}^{\infty}_d(x_k)=\sum_{i\in I}\lambda_i \frac{1}{\|x_k\|^d}(f_i)^\infty_{d_i}(x_k)=\sum_{i\in I}\lambda_i (f_i)^\infty_{d_i}(\frac{x_k}{\|x_k\|})$$
with $d=\deg f_i, i\in I$. This together with inequalities (\ref{250107c}) and let $k\to +\infty$, we have $\{\sum_{i=1}^{q}\lambda_i f_i\}^{\infty}_d(v_0)\leq 0$. And so, $\{\sum_{i=1}^{q}\lambda_i f_i\}^{\infty}_d(v_0)=0$. Since $(K_{\bar x})_\infty\subseteq S_{\infty}$, we have $v_0\in S_{\infty}\backslash\{0\}$. Thus, $v_0\in SOL(S_\infty, \{\sum_{i=1}^{q}\lambda_i f_i\}^{\infty}_d)\setminus\{0\}$, which is a contradiction with $SOL(S_\infty, \{\sum_{i=1}^{q}\lambda_i f_i\}^{\infty}_d)=\{0\}$.

``$\Rightarrow$": It is clearly, since $K_{\bar x}$ is bounded if and only if $(K_{\bar x})_\infty=\{0\}$, we only let $S=K_{\bar x}$.
\end{proof}

\begin{remark}\label{250107d}
Let $\bar x\in K$. By the proof of Proposition \ref{250107a}, we know that the choice of $S$ depends on $K_{\bar x}$ in conditions of Proposition \ref{250107a}. However, since the inclusion $(K_{\bar x})_\infty\subseteq K_\infty\bigcap\{x\in \mathbf{R}^n\mid f(x)\leq f(\bar x)\}_{\infty}\subseteq K_\infty\bigcap\{x\in \mathbf{R}^n\mid f^\infty_\mathbf{d}(x)\leq 0\}$ naturally valid, by the proof of Proposition \ref{250107a}, we know that if a closed set $S\subseteq\mathbf{R}^n$ satisfies with $S\in \{S'\subseteq\mathbf{R}^n\vert K_\infty\bigcap\{x\in \mathbf{R}^n\mid f(x)\leq f(\bar x)\}_{\infty}\subseteq (S')_\infty\subseteq K_\infty\}$, which is independent of $K_{\bar x}$, such that the one of conditions (i),(ii) and (iii) in Proposition \ref{250107a} holds, then we have the following result.
\end{remark}

\begin{corollary}
Let $\bar x\in K$, $\lambda=(\lambda_1,\lambda_2,\dots,\lambda_q)\in \mathbf{R}^q_+\backslash\{0\}$ and a nonempty set $S\subseteq\mathbf{R}^n$ satisfying with $K_\infty\bigcap\{x\in \mathbf{R}^n\mid f(x)\leq f(\bar x)\}_{\infty}\subseteq S_\infty\subseteq K_\infty$. If the one of the following conditions hold:
\begin{itemize}
\item[(i)] $SOL(S_\infty, \{\sum_{i=1}^{q}\lambda_i f_i\}^{\infty}_d)=\{0\}$;
\item[(ii)] $SOL^{s}(S_\infty, f^{\infty}_\mathbf{d})=\{0\}$;
\item[(iii)] $SOL^{w}(S_\infty, f^{\infty}_\mathbf{d})=\{0\}$.
\end{itemize}
Then $K_{\bar x}$ is bounded.
\end{corollary}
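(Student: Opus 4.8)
The plan is to obtain this as a direct consequence of the ``$\Leftarrow$'' implication in Proposition \ref{250107a}, the point being that the admissibility requirement imposed on $S$ here is strictly stronger than the one used there. Concretely, Proposition \ref{250107a} assumes $(K_{\bar x})_\infty\subseteq S_\infty\subseteq K_\infty$, whereas the present statement assumes $K_\infty\bigcap\{x\in\mathbf{R}^n\mid f(x)\leq f(\bar x)\}_\infty\subseteq S_\infty\subseteq K_\infty$. So the first task is simply to verify that the latter implies the former, after which the same mechanism applies.

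First I would recall the inclusion chain already established in the preliminary discussion (its first inclusion via \cite[Proposition 3.9]{RC}, the second by the explicit limiting argument given there),
\[
(K_{\bar x})_\infty\subseteq K_\infty\bigcap\{x\in\mathbf{R}^n\mid f(x)\leq f(\bar x)\}_\infty\subseteq K_\infty\bigcap\{x\in\mathbf{R}^n\mid f^\infty_\mathbf{d}(x)\leq 0\}\subseteq K_\infty.
\]
Combining its leftmost inclusion with the hypothesis $K_\infty\bigcap\{x\in\mathbf{R}^n\mid f(x)\leq f(\bar x)\}_\infty\subseteq S_\infty$ yields $(K_{\bar x})_\infty\subseteq S_\infty\subseteq K_\infty$, which is exactly the admissibility condition needed to invoke Proposition \ref{250107a}. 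Since the three alternatives (i)--(iii) here are verbatim the conditions of Proposition \ref{250107a}, whichever one holds, its ``$\Leftarrow$'' direction immediately delivers the boundedness of $K_{\bar x}$.

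If a self-contained argument is preferred, I would replicate the contradiction scheme of Proposition \ref{250107a}: assuming $K_{\bar x}$ unbounded, extract $\{x_k\}\subseteq K_{\bar x}$ with $\|x_k\|\to+\infty$ and, along a subsequence, $x_k/\|x_k\|\to v_0\in (K_{\bar x})_\infty\setminus\{0\}$; dividing $f_i(x_k)\leq f_i(\bar x)$ by $\|x_k\|^{d_i}$ and passing to the limit gives $(f_i)^\infty_{d_i}(v_0)\leq 0$ for all $i$. The crucial point is that the inclusion chain places $v_0\in S_\infty$. Under (i) this forces $\{\sum_{i=1}^q\lambda_i f_i\}^\infty_d(v_0)=0$, so $v_0\in SOL(S_\infty,\{\sum_{i=1}^q\lambda_i f_i\}^\infty_d)\setminus\{0\}$; under (ii), since $0\in SOL^{s}(S_\infty,f^\infty_\mathbf{d})$ and $f^\infty_\mathbf{d}(v_0)\leq 0=f^\infty_\mathbf{d}(0)$, one gets $f^\infty_\mathbf{d}(v_0)=0$ and hence $v_0\in SOL^{s}(S_\infty,f^\infty_\mathbf{d})\setminus\{0\}$; and (iii) reduces to (i) through the equivalence in Theorem \ref{2410122}. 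Each case contradicts the assumed singleton solution set.

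The argument presents no genuine obstacle, since the machinery of Proposition \ref{250107a} is reused unchanged. The only place requiring care is the membership $v_0\in S_\infty$: there it was immediate from $(K_{\bar x})_\infty\subseteq S_\infty$, whereas here it must be routed through the leftmost inclusion of the chain above. This is precisely the observation recorded in Remark \ref{250107d}, namely that $S$ may be prescribed independently of $K_{\bar x}$ via the bound $K_\infty\bigcap\{x\in\mathbf{R}^n\mid f(x)\leq f(\bar x)\}_\infty\subseteq S_\infty\subseteq K_\infty$, so the corollary follows at once.
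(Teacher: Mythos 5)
Your proposal is correct and takes essentially the same route as the paper, which justifies this corollary precisely via Remark \ref{250107d}: since $(K_{\bar x})_\infty\subseteq K_\infty\bigcap\{x\in\mathbf{R}^n\mid f(x)\leq f(\bar x)\}_\infty$, the present hypothesis on $S$ implies $(K_{\bar x})_\infty\subseteq S_\infty\subseteq K_\infty$, so the ``$\Leftarrow$'' argument of Proposition \ref{250107a} applies unchanged. Your optional self-contained replication (handling (i) and (ii) directly and reducing (iii) to (i) through Theorem \ref{2410122}) mirrors the mechanics of that proof and adds nothing that conflicts with it.
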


Next, we obtain the existence of the Pareto efficient solutions for ${\rm{PVOP}} (K, f)$.

\begin{theorem}\label{exist0}
Let $\bar x\in K$ and $\lambda=(\lambda_1,\lambda_2,\dots,\lambda_q)\in \mathbf{R}^q_+\backslash\{0\}$. If there exists a closed set $S\subseteq\mathbf{R}^n$ satisfying with $(K_{\bar x})_\infty\subseteq S_\infty\subseteq K_\infty$ such that the one of the following conditions hold:
\begin{itemize}
\item[(i)] $SOL(S_\infty, \{\sum_{i=1}^{q}\lambda_i f_i\}^{\infty}_d)=\{0\}$;
\item[(ii)] $SOL^{s}(S_\infty, f^{\infty}_\mathbf{d})=\{0\}$;
\item[(iii)] $SOL^{w}(S_\infty, f^{\infty}_\mathbf{d})=\{0\}$,
\end{itemize} then $SOL^{s}(K, f)$ is nonempty. In addition, if $S$ satisfies with $K_\infty\cap(\bigcup^q_{i=1}\{x\in\mathbf{R}^n\vert (f_i)^{\infty}_{d_i}(x)\leq 0\})\subseteq S_\infty\subseteq K_\infty$ in conclusion (iii), then $SOL^{s}(K, f)$ is also bounded.
\end{theorem}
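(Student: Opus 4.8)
The plan is to reduce the existence statement to a compact subproblem and then settle the boundedness addendum by a limiting argument. First I would note that, by Proposition \ref{250107a}, each of the three hypotheses (i)--(iii) (for a suitable closed $S$ with $(K_{\bar x})_\infty\subseteq S_\infty\subseteq K_\infty$) is \emph{equivalent} to the boundedness of the sublevel set $K_{\bar x}=\{x\in K\mid f(x)\leq f(\bar x)\}$, the equivalence among (i)--(iii) themselves being Theorem \ref{2410122}. Since $K$ is closed and $f$ is continuous, $K_{\bar x}$ is closed, hence compact, and it is nonempty because $\bar x\in K_{\bar x}$.

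Next I would produce a Pareto point on $K_{\bar x}$ by scalarization. Fixing any $\mu\in{\rm int}\,\mathbf{R}^q_+$ (e.g. $\mu=(1,\dots,1)$), the continuous scalar function $\sum_{i=1}^q\mu_i f_i$ attains its minimum over the compact set $K_{\bar x}$ at some $x^*$, and $x^*\in SOL^{s}(K_{\bar x},f)$: any $x\in K_{\bar x}$ with $f(x)-f(x^*)\in-\mathbf{R}^q_+\setminus\{0\}$ would satisfy $\sum_i\mu_i f_i(x)<\sum_i\mu_i f_i(x^*)$, contradicting minimality. To upgrade this to efficiency on all of $K$, suppose some $x\in K$ obeyed $f(x)-f(x^*)\in-\mathbf{R}^q_+\setminus\{0\}$; then $f(x^*)\leq f(\bar x)$ forces $f(x)\leq f(\bar x)$, i.e. $x\in K_{\bar x}$, contradicting $x^*\in SOL^{s}(K_{\bar x},f)$. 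Hence $x^*\in SOL^{s}(K,f)$, proving $SOL^{s}(K,f)\neq\emptyset$.

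For the boundedness addendum, under the stronger inclusion $K_\infty\cap\big(\bigcup_{i=1}^q\{x\mid (f_i)^\infty_{d_i}(x)\leq 0\}\big)\subseteq S_\infty\subseteq K_\infty$ and (iii) [$SOL^{w}(S_\infty,f^\infty_\mathbf{d})=\{0\}$], I would argue by contradiction: an unbounded $SOL^{s}(K,f)$ gives $y_k\in SOL^{s}(K,f)$ with $\|y_k\|\to+\infty$ and $y_k/\|y_k\|\to v_0$, $\|v_0\|=1$, so $v_0\in K_\infty\setminus\{0\}$. A useful preliminary \emph{Fact} is $\{v\in K_\infty\mid f^\infty_\mathbf{d}(v)=0\}=\{0\}$: such a $v$ lies in $S_\infty$ by the stronger inclusion, shares the objective value of $0$, and $0\in SOL^{w}(S_\infty,f^\infty_\mathbf{d})=\{0\}$ forces $v=0$. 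Comparing $y_k$ with the already-constructed $x^*$ (both efficient, so their images are equal or incomparable), the equal case $f(y_k)=f(x^*)$ is excluded by the Fact (it would give $f^\infty_\mathbf{d}(v_0)=0$), while the incomparable case yields, via pigeonholing over $k$, a fixed index $i_0$ with $f_{i_0}(y_k)<f_{i_0}(x^*)$ along a subsequence; dividing by $\|y_k\|^{d_{i_0}}$ gives $(f_{i_0})^\infty_{d_{i_0}}(v_0)\leq 0$, whence $v_0\in S_\infty\setminus\{0\}$. It then suffices to show $v_0\in SOL^{w}(S_\infty,f^\infty_\mathbf{d})$, contradicting $SOL^{w}(S_\infty,f^\infty_\mathbf{d})=\{0\}$.

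The hard part is precisely this final step: showing that the asymptotic direction $v_0$ of an unbounded efficient sequence is weakly efficient for the leading-term problem on $S_\infty$. The natural route assumes a strictly dominating recession direction $w\in S_\infty$ with $(f_i)^\infty_{d_i}(w)<(f_i)^\infty_{d_i}(v_0)$ for all $i$ and manufactures, from a sequence in $K$ realizing $w$, a point strictly dominating some $y_k$, contradicting its efficiency. The delicate point for a general (non-convex, non-semialgebraic) closed $K$ is scale synchronization: because the leading terms $(f_i)^\infty_{d_i}$ are homogeneous of \emph{different} degrees $d_i$, one must pair the realizing scale $s_m$ of $w$ with $\|y_k\|$ so that the ratio $s_m/\|y_k\|$ falls in an explicit open band around $1$ dictated by the signs of $(f_i)^\infty_{d_i}(v_0)$. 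I expect this synchronization, carried through the homogeneity of $f^\infty_\mathbf{d}$ and the asymptotic-cone calculus of \cite{RC,AA}, to be the crux of the argument.
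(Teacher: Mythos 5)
Your existence half is correct and is essentially the paper's own argument: Proposition \ref{250107a} gives boundedness (hence compactness) of $K_{\bar x}$, Weierstrass applied to a strictly positive scalarization $\sum_i\mu_i f_i$ produces a minimizer $x^*$, and the two-stage domination argument (Pareto on $K_{\bar x}$, then Pareto on $K$ because any dominating point automatically lies in $K_{\bar x}$) yields $x^*\in SOL^{s}(K,f)$; the paper invokes \cite[Proposition 13]{MG} where you argue directly. The setup of your boundedness half also coincides with the paper's: take $y_k\in SOL^{s}(K,f)$ with $\|y_k\|\to+\infty$, $y_k/\|y_k\|\to v_0\neq 0$, pigeonhole a fixed index $i_0$ with $f_{i_0}(y_k)\leq f_{i_0}(x^*)$ along a subsequence, divide by $\|y_k\|^{d_{i_0}}$ to get $(f_{i_0})^{\infty}_{d_{i_0}}(v_0)\leq 0$, and use the strengthened inclusion to place $v_0\in S_\infty\setminus\{0\}$.

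The genuine gap is the final step, which you explicitly leave open. You reduce the desired contradiction to proving that $v_0\in SOL^{w}(S_\infty,f^{\infty}_{\mathbf{d}})$, call this "the hard part," and sketch a route in which a strictly dominating direction $w\in S_\infty$ is used to manufacture, from a sequence in $K$ realizing $w$, a feasible point strictly dominating some $y_k$. This is incomplete (the crux is never carried out), and moreover the sketched route would fail at the stated level of generality: for an arbitrary nonconvex, non-semialgebraic closed $K$, an asymptotic direction $w\in S_\infty\subseteq K_\infty$ is not "realizable" near a given $y_k$ --- nothing guarantees points of $K$ of the form $y_k+tw$, even approximately, so no feasible dominating point can be manufactured; avoiding exactly this kind of construction is the point of the theorem. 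The step you are missing is in fact immediate from machinery you already invoked: by Theorem \ref{2410122} (the equivalence of its conclusions (i) and (ii)), hypothesis (iii) $SOL^{w}(S_\infty,f^{\infty}_{\mathbf{d}})=\{0\}$ is equivalent to $SOL(S_\infty,(f_i)^{\infty}_{d_i})=\{0\}$ for every $i$, which (since $(f_i)^{\infty}_{d_i}(0)=0$ and $0\in S_\infty$) means each leading form satisfies $(f_i)^{\infty}_{d_i}(v)>0$ for all $v\in S_\infty\setminus\{0\}$. This contradicts $(f_{i_0})^{\infty}_{d_{i_0}}(v_0)\leq 0$ with $v_0\in S_\infty\setminus\{0\}$ outright: no weak-efficiency claim about $v_0$, and no scale synchronization across the degrees $d_i$, is needed. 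This is exactly how the paper closes its proof (its phrase "$v_0\in SOL^{w}(S_\infty,f^{\infty}_{\mathbf{d}})\setminus\{0\}$, a contradiction" is justified by the same equivalence), and it also turns your preliminary Fact into a one-line consequence of the same positivity.
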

\begin{proof}
Let $\alpha=(\alpha_1,\alpha_2,\dots,\alpha_q)\in \rm{ int }\mathbf{R}^{q}_{+}$. Define $g_\alpha(x)=\sum^{q}_{i=1}\alpha_i f_{i}(x)$. Then, by Proposition \ref{250107a},  $K_{\bar x}$ is bounded,  and so $K_{\bar x}$ is a compact set. Thus, by $\emph{Weierstrass}'$ Theorem, we have $SOL(K_{\bar x}, g_\alpha)\neq\emptyset$. Since $SOL(K_{\bar x}, g_\alpha)\subseteq SOL^{s}(K, f)$ (by \cite[Proposition 13]{MG}), we have $SOL^{s}(K, f)\neq\emptyset$. In addition, suppose on the contrary that there exists a consequence $\{x_k\}\subseteq SOL^{s}(K, f)$ such that $\|x_k\|\to +\infty$ as $k\to +\infty$.  Without loss of generality, we can assume that $\|x_{k}\|\neq 0$ and $\frac{x_{k}}{\|x_{k}\|}\rightarrow v_0\setminus\{0\}$. Fix any $x_0\in K$. Since $x_k\in SOL^{s}(K, f)$ for any $k$, there exists $i_k\in \{1,2,\dots,q\}$ such that $f_{i_k}(x_0)\geq f_{i_k}(x_k)$ for any $k$. Because the set $\{1,2,\dots,q\}$ is finite, without loss of generality, we suppose that there exists $i_0\in \{1,2,\dots,q\}$ such that $f_{i_0}(x_0)\geq f_{i_0}(x_k)$ for any $k$. Dividing the both side of the above inequality by $\|x_k\|^{d_{i_0}}$ and then letting $k\rightarrow +\infty$, we get
$$(f_{i_0})^{\infty}_{d_{i_0}}(v_0)\leq 0.$$
Then $v_0\in K_\infty\cap(\bigcup^q_{i=1}\{x\in\mathbf{R}^n\vert (f_i)^{\infty}_{d_i}(x)\leq 0\})$. Since $K_\infty\cap(\bigcup^q_{i=1}\{x\in\mathbf{R}^n\vert (f_i)^{\infty}_{d_i}(x)\leq 0\})\subseteq S_\infty\subseteq K_\infty$, we have $v_0\in S_\infty$. This implies $v_0\in SOL^{w}(S_\infty, f^{\infty}_\mathbf{d})\setminus\{0\}$, a contradiction. Thus, $SOL^{s}(K, f)$ is bounded. \end{proof}

\begin{remark}\label{250107e}
In particular, if $S_\infty=K_\infty$ in Theorem \ref{exist0}, then we infer Theorems 5.1 and 5.8 in \cite{LDY1}. Similar to the discussion of Remark \ref{250107d}, we know that if $S\subseteq\mathbf{R}^n$ satisfies with $S\in \{S'\subseteq\mathbf{R}^n\vert K_\infty\bigcap\{x\in \mathbf{R}^n\mid f(x)\leq f(\bar x)\}_{\infty}\subseteq (S')_\infty\subseteq K_\infty\}$ such that the one of conditions (i),(ii) and (iii) in Theorem \ref{exist0} holds, then we have $SOL^{s}(K, f)$ is nonempty.
\end{remark}

Now, we give a following example to illustrate Theorem \ref{exist0}.

\begin{example}\label{1}
	Consider the vector polynomial $f=(f_{1}, f_{2})$ with
 $$f_{1}(x_{1}, x_{2})=x^{3}_{2}-x^{2}_{1}-x_{1}x_{2}+1, f_{2}(x_{1}, x_{2})=x^{2}_{1}-1$$ and
  $$K=\{(x_{1}, x_{2})\in \mathbf{R}^{2}: x_{1}\geq 0, x_{2}\geq 0,e^{x_{1}}-x_{2}\geq 0\}.$$
Then $(f_{1})^{\infty}_{d_1}(x_{1}, x_{2})=x^{3}_{2},(f_{2})^{\infty}_{d_2}(x_{1}, x_{2})=x^{2}_{1}$.
Let $S=K_\infty\bigcap\{x\in \mathbf{R}^2\mid f^\infty_\mathbf{d}(x)\leq 0\}$. Then $S=S_\infty$. It is easy to prove that $S_\infty=\{(0,0)\}$. So $SOL(S_\infty, \{\sum_{i=1}^{2}\lambda_i f_i\}^{\infty}_d)=\{0\}$ for all $ \lambda=(\lambda_1,\lambda_2)\in \mathbf{R}^2_+\backslash\{0\}$. By Theorem \ref{exist0}, we have $SOL^{s}(K, f)\neq\emptyset$. It is worth mentioning that \cite[Theorem 5.1]{LGJ1}, \cite[Theorem 4.1]{DTN}, \cite[ Theorem 3.1]{LGJ}, \cite[ Theorem 3.2, 3.10]{LDY2} and \cite[ Theorem 5.8]{LDY1} cannot be applied in this example since $f$ is non-convex and non-regular on $K$, and $K$ is neither convex nor semi-algebraic set.
\end{example}

The following example shows that if the one of the conditions \emph{(i)}-\emph{(iii)} in Theorem \ref{exist0} holds, then $SOL^{s}(K, f)$ is nonempty. However, $SOL^{s}(K, f)$ may be unbounded.

\begin{example}
		Consider the vector polynomial $f=(f_{1}, f_{2})$ with $f_{1}(x_{1}, x_{2})=2x^{2}_{1}-x_{2}, f_{2}(x_{1}, x_{2})=x^{3}_{2}$ and
$$K=\{(x_{1}, x_{2})\in \mathbf{R}^{2}: x_{2}\geq x_{1}\geq 0\}.$$
Then $(f_{1})^{\infty}_{d_1}(x_{1}, x_{2})=2x^{2}_{1}$ and $(f_{2})^{\infty}_{d_2}(x_{1}, x_{2})=x^{3}_{2}$. Let $S=K_\infty\bigcap\{x\in \mathbf{R}^2\mid f^\infty_\mathbf{d}(x)\leq 0\}$. Then $S_\infty=S$. It is easy to prove that $S_\infty=\{(0,0)\}$, and so, $SOL(S_\infty, \{\sum_{i=1}^{2}\lambda_i f_i\}^{\infty}_d)=\{0\}$ for all $ \lambda=(\lambda_1,\lambda_2)\in \mathbf{R}^2_+\backslash\{0\}$. On the other hand, $SOL^{s}(K, f)$ is unbounded since
$$\{(x_{1}, x_{2})\in K: x_{1}=0, x_{2}\geq 0\}\subseteq SOL^{s}(K, f).$$
	\end{example}

The following example shows that the converse of Theorem \ref{exist0} does not hold in general.

\begin{example}\label{1029}
	Consider the polynomial $f=(f_{1}, f_{2})$ with
$$f_{1}(x_{1}, x_{2})=x_{1}-x_{2},\quad f_{2}(x_{1}, x_{2})=x_{2}-x_{1}$$ and
$$K=\{(x_{1}, x_{2})\in \mathbf{R}^{2}: x_{1}\geq 0, x_{2}\geq 0, x_1=x_2\}.$$
	Then $(f_{1})^{\infty}_{d_1}(x_{1}, x_{2})=x_{1}-x_{2}, (f_{2})^{\infty}_{d_2}(x_{1}, x_{2})=x_{2}-x_{1}$. Let $\bar x\in K$ and the set $S$ satisfying with $K_{\bar x}\subseteq S\subseteq K$ be arbetrary. Then, it is easy to prove $SOL(S_\infty, \{\sum_{i=1}^{2}\lambda_i f_i\}^{\infty}_d)=\{(x_{1}, x_{2})\in \mathbf{R}^{2}: x_{1}=x_{2}\geq 0\}$ for all $ \lambda=(\lambda_1,\lambda_2)\in \mathbf{R}^2_+\backslash\{0\}$, which is unbounded. On the other hand, $SOL^{s}(K, f)=K$.
\end{example}

From Example \ref{1029}, we have known that the inverse of Theorem \ref{exist0} may not hold. However, we have the following result.

\begin{proposition}\label{n1029}
	If $SOL^{s}(K, f)$ is nonempty and bounded, then there exists a closed set $S\subseteq\mathbf{R}^n$ satisfying with $(K_{\bar x})_\infty\subseteq S_\infty\subseteq K_\infty$ such that the following results hold:
\begin{itemize}
\item[(i)] $SOL(S_\infty, \{\sum_{i=1}^{q}\lambda_i f_i\}^{\infty}_d)=\{0\}$ for any $\lambda=(\lambda_1,\lambda_2,\dots,\lambda_q)\in \mathbf{R}^q_+\backslash\{0\}$;
\item[(ii)] $SOL^{s}(S_\infty, f^{\infty}_\mathbf{d})=\{0\}$;
\item[(iii)] $SOL^{w}(S_\infty, f^{\infty}_\mathbf{d})=\{0\}$.
\end{itemize}
\end{proposition}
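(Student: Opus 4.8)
The plan is to produce $S$ explicitly by taking a sublevel set anchored at a Pareto efficient point, and to show that the boundedness hypothesis forces this sublevel set to be bounded, so that its asymptotic cone collapses to $\{0\}$ and all three conclusions become immediate. First I would invoke the hypothesis $SOL^{s}(K,f)\neq\emptyset$ to fix a point $\bar x\in SOL^{s}(K,f)$, and then set $S:=K_{\bar x}=\{x\in K\mid f_i(x)\leq f_i(\bar x),\ i=1,\dots,q\}$. This $S$ is nonempty (it contains $\bar x$) and closed, being the intersection of the closed set $K$ with the closed sublevel sets $\{x\mid f_i(x)\leq f_i(\bar x)\}$ of the continuous polynomials $f_i$. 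Moreover $S\subseteq K$ gives $(K_{\bar x})_\infty=S_\infty\subseteq K_\infty$, so the required inclusion chain $(K_{\bar x})_\infty\subseteq S_\infty\subseteq K_\infty$ holds automatically.

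The substantive step is the observation that $K_{\bar x}\subseteq SOL^{s}(K,f)$. Indeed, for any $y\in K_{\bar x}$ one has $f(y)-f(\bar x)\in-\mathbf{R}^q_+$; since $\bar x$ is Pareto efficient, $f(y)-f(\bar x)\notin-\mathbf{R}^q_+\backslash\{0\}$, and the two together force $f(y)=f(\bar x)$ (this is precisely \cite[Proposition 3.2]{24LDY3}). Consequently, for every $z\in K$ we have $f(z)-f(y)=f(z)-f(\bar x)\notin-\mathbf{R}^q_+\backslash\{0\}$, i.e.\ $y\in SOL^{s}(K,f)$. Thus every point of the sublevel set $K_{\bar x}$ is itself a strongly efficient solution.

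Now the boundedness hypothesis enters. Since $SOL^{s}(K,f)$ is bounded and $K_{\bar x}\subseteq SOL^{s}(K,f)$, the set $S=K_{\bar x}$ is bounded. By the characterization recalled in Section~2 (a closed set $C$ is bounded if and only if $C_\infty=\{0\}$), we obtain $S_\infty=(K_{\bar x})_\infty=\{0\}$. Because $S_\infty$ is the singleton $\{0\}$, every minimization or efficiency problem posed over $S_\infty$ has $\{0\}$ as its unique solution set: $SOL(S_\infty,\{\sum_{i=1}^{q}\lambda_i f_i\}^{\infty}_d)=\{0\}$ for every $\lambda\in\mathbf{R}^q_+\backslash\{0\}$, and likewise $SOL^{s}(S_\infty,f^{\infty}_\mathbf{d})=SOL^{w}(S_\infty,f^{\infty}_\mathbf{d})=\{0\}$, since a singleton feasible set makes its unique point trivially optimal, strongly efficient and weakly efficient. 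This yields (i), (ii) and (iii) simultaneously; alternatively, once any one of them is checked, the other two follow from Theorem~\ref{2410122}, as $S_\infty=\{0\}\subseteq\{x\mid f^{\infty}_\mathbf{d}(x)\leq 0\}$.

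The only genuinely nontrivial ingredient is the second paragraph—recognizing that the whole sublevel set $K_{\bar x}$ consists of Pareto efficient solutions, so that boundedness of $SOL^{s}(K,f)$ transfers to $K_{\bar x}$; this is where the efficiency of $\bar x$ is used in an essential way. Everything else is routine bookkeeping about asymptotic cones and optimization over a single point, so I do not expect any serious obstacle beyond keeping the quantification on $\bar x$ consistent with the existential reading used in Propositions~\ref{2410143} and~\ref{1019}.
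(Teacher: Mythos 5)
Your proposal is correct and follows essentially the same route as the paper: both take $\bar x\in SOL^{s}(K,f)$, set $S=K_{\bar x}$, use the fact that Pareto efficiency of $\bar x$ forces $f\equiv f(\bar x)$ on $K_{\bar x}$ (so $K_{\bar x}\subseteq SOL^{s}(K,f)$), and conclude $K_{\bar x}$ is bounded, hence $S_\infty=\{0\}$ and all three conclusions hold. The only cosmetic difference is that you argue the inclusion $K_{\bar x}\subseteq SOL^{s}(K,f)$ directly, whereas the paper runs the same idea as a contradiction with an unbounded sequence and cites \cite[Proposition 3.2]{24LDY3} for the transfer back to $SOL^{s}(K,f)$.
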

\begin{proof}
	Let $\bar x\in SOL^{s}(K, f)$ and $S=K_{\bar x}$. Since $S_\infty\subseteq \{x\in \mathbf{R}^n\mid f^\infty(x)\leq 0\}$, the above conclusions \emph{(i)}-\emph{(iii)} are equivalent by Theorem \ref{2410122}. Thus, we only need to prove the conclusion \emph{(i)} holds. We  claim that $K_{\bar x}=\{x\in K: f_i(x)\leq f_i(\bar x), i=1,2,\dots, q\}$ is bounded, since if $K_{\bar x}$ is bounded, then $(K_{\bar x})_\infty=\{0\}$, and so, $SOL(S_\infty, \{\sum_{i=1}^{q}\lambda_i f_i\}^{\infty}_d)=\{0\}$ with $\lambda=(\lambda_1,\lambda_2,\dots,\lambda_q)\in \mathbf{R}^q_+\backslash\{0\}$. Suppose on the contrary that $K_{\bar x}$ is unbounded. Then there exists a sequence  $\{x_{k}\}\subset K_{\bar x}$ such that $\|x_{k}\|\to +\infty$ as $k\to +\infty$. Since $\bar x\in SOL^{s}(K, f)$, we have the section $\lbrack f(K)\rbrack_{f(\bar x)}=\{f(\bar x)\}$. So $f(x_k)=f(\bar x)$ for all $k$. And so $\{x_k\}\subseteq SOL^{s}(K_{\bar x}, f)$. Thus, by \cite[Proposition 3.2]{24LDY3}, we have $\{x_k\}\subseteq SOL^{s}(K, f)$, which is a contradiction with the boundedness of $SOL^{s}(K, f)$.\end{proof}

\begin{remark}
	Example \ref{1029} shows that the boundedness of $SOL^{s}(K, f)$ in Theorem \ref{n1029} plays an essential role and it cannot be dropped.
\end{remark}

The following results give Frank-Wolfe type theorems for  ${\rm{PVOP}} (K, f)$ under the relative regularity conditions.

\begin{corollary}\label{wexist1}[Frank-Wolfe type theorems for ${\rm{PVOP}}(K, f)$]
The following results hold:
\begin{itemize}
	\item [(i)] Assume that there exist $\bar x\in K$ and the some nonempty index set $I\subseteq\{1,2,\dots,q\}$ such that the vector polynomial $f$ is relatively $I$-$\mathbf{R}^q_+$-zero-regular with $K_{\bar x}$ on $K$. If $f$ is $I$-section-bounded from below at $\bar x$, then $SOL^{s}(K, f)$ is nonempty;
	\item [(ii)] Assume that there exists $\bar x\in K$ such that the vector polynomial $f$ is relatively $\mathbf{R}^q_+$-zero-regular with $K_{\bar x}$ on $K$. If $f$ is section-bounded from below at $\bar x$, then $SOL^{s}(K, f)$ is nonempty;
    \item [(iii)] Assume that there exists $\bar x\in K$ such that the vector polynomial $f$ is relatively strongly regular with $K_{\bar x}$ on $K$. If $f$ is  $I$-section-bounded from below at $\bar x$ for the some nonempty index set $I\subseteq\{1,2,\dots,q\}$, then $SOL^{s}(K, f)$ is nonempty;
	\item [(iv)] Assume that there exists $\bar x\in K$ such that the vector polynomial $f$ is relatively weakly regular with $K_{\bar x}$ on $K$. If $f$ is section-bounded from at $\bar x$, then $SOL^{s}(K, f)$ is nonempty.
\end{itemize}
\end{corollary}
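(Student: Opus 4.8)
The plan is to derive all four statements as applications of Theorem \ref{exist0} with the specific choice $S = K_{\bar x}$, for which $S_\infty = (K_{\bar x})_\infty \subseteq \{x \in \mathbf{R}^n \mid f^\infty_\mathbf{d}(x) \leq 0\}$ holds automatically. In each case the relative regularity hypothesis will supply boundedness of the relevant asymptotic solution set, which by Remark \ref{compa} (following \cite{HV1,LDY1}) forces that set to be either $\{0\}$ or $\emptyset$; the section-boundedness hypothesis will then produce the element $0$ in that same set, ruling out the empty alternative. Once the relevant set is shown to equal $\{0\}$, the corresponding clause of Theorem \ref{exist0} immediately yields $SOL^{s}(K, f) \neq \emptyset$.

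For part (i), the relative $I$-$\mathbf{R}^q_+$-zero-regularity with $K_{\bar x}$ furnishes $\lambda \in \mathbf{R}^q_+ \setminus \{0\}$ with $\{i : \lambda_i \neq 0\} = I$ such that $SOL((K_{\bar x})_\infty, \{\sum_{i=1}^{q}\lambda_i f_i\}^{\infty}_d)$ is bounded, hence equal to $\{0\}$ or $\emptyset$. Since $f$ is $I$-section-bounded from below at $\bar x$, each $f_i$ with $i \in I$ is bounded from below on $K_{\bar x}$; because $\lambda_i > 0$ precisely on $I$, the scalar polynomial $f_\lambda = \sum_{i \in I}\lambda_i f_i$ is itself bounded from below on $K_{\bar x}$. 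The standard recession argument (dividing $f_\lambda(x_k) \geq \mathrm{const}$ by $\|x_k\|^{d}$ and passing to the limit), or equivalently Proposition \ref{2410234}, then gives $0 \in SOL((K_{\bar x})_\infty, \{\sum_{i=1}^{q}\lambda_i f_i\}^{\infty}_d)$, so this set equals $\{0\}$ and Theorem \ref{exist0}(i) applies. Part (ii) is the special case $I = \{1,\dots,q\}$, where $\mathbf{R}^q_+$-zero-regularity and full section-boundedness from below reduce exactly to this argument.

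For parts (iii) and (iv) I would work instead with the vector leading term $f^\infty_\mathbf{d}$. In part (iii), relative strong regularity with $K_{\bar x}$ gives boundedness of $SOL^{w}((K_{\bar x})_\infty, f^\infty_\mathbf{d})$, i.e.\ it is $\{0\}$ or $\emptyset$; Corollary \ref{weakpro} shows that $I$-section-boundedness at $\bar x$ yields $0 \in SOL^{w}((K_{\bar x})_\infty, f^\infty_\mathbf{d})$, so the set is $\{0\}$ and Theorem \ref{exist0}(iii) applies. In part (iv), relative weak regularity gives boundedness of $SOL^{s}((K_{\bar x})_\infty, f^\infty_\mathbf{d})$; here full section-boundedness from below is needed, since by the second assertion of Corollary \ref{weakpro} it is what guarantees $0 \in SOL^{s}((K_{\bar x})_\infty, f^\infty_\mathbf{d})$, whence the set is $\{0\}$ and Theorem \ref{exist0}(ii) applies.

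The routine parts are the recession-limit computations, all of which are already packaged in the propositions of Section 3. The one genuinely delicate point is the bookkeeping that matches each regularity condition to the section-boundedness strength that is exactly sufficient to place $0$ in the corresponding solution set: in particular, recognizing in part (i) that positivity of $\lambda$ on $I$ transfers boundedness from below of the components $f_i$, $i \in I$, to boundedness from below of the single scalar polynomial $f_\lambda$, and noting in part (iv) that $I$-section-boundedness alone would not suffice to force $0 \in SOL^{s}$, so that the full section-boundedness hypothesis there cannot be weakened.
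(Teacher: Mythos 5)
Your proof is correct and follows essentially the same route as the paper: use the Section~3 results (Proposition \ref{2410234}/Corollary \ref{2410235} for the scalarized sets, Corollary \ref{weakpro} for $SOL^{s}$ and $SOL^{w}$) to place $0$ in the relevant asymptotic solution set, invoke the regularity hypothesis (boundedness, hence the $\{0\}$-or-$\emptyset$ dichotomy of Remark \ref{compa}) to conclude the set equals $\{0\}$, and finish with Theorem \ref{exist0}. If anything, your treatment of part (i) is slightly more careful than the paper's wording, since you verify $0\in SOL((K_{\bar x})_\infty, \{\sum_{i=1}^{q}\lambda_i f_i\}^{\infty}_d)$ for the \emph{same} $\lambda$ furnished by the regularity condition (via positivity of $\lambda_i$ on $I$ transferring boundedness from below to $f_\lambda$), rather than for a possibly different $\lambda$ produced by Corollary \ref{2410235}.
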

\begin{proof}
\emph{(i)} Since $f$ is $I$-section-bounded from below  at $\bar x\in K$, there exists $\lambda=(\lambda_1,\lambda_2,\dots,\lambda_q)\in \mathbf{R}^q_+\backslash\{0\}$ such that $SOL((K_{\bar x})_\infty, \{\sum_{i=1}^{q}\lambda_i f_i\}^{\infty}_d)\neq\emptyset$ by Corollary \ref{2410235}. By the definition of the relative $I$-$\mathbf{R}^q_+$-zero-regularity, we have  $SOL((K_{\bar x})_\infty, \{\sum_{i=1}^{q}\lambda_i f_i\}^{\infty}_d)=\{0\}$. Thus, \emph{(i)} follows from Theorem \ref{exist0}.

 \emph{(ii)} Since $f$ is section-bounded from below at $\bar x\in K$, we have $SOL((K_{\bar x})_\infty, \{\sum_{i=1}^{q}\lambda_i f_i\}^{\infty}_d)\neq\emptyset$ for all $\lambda=(\lambda_1,\lambda_2,\dots,\lambda_q)\in  \rm{ int } \mathbf{R}^q_+$ by Corollary \ref{2410235}. By  the definition of the relative $I$-$\mathbf{R}^q_+$-zero-regularity, we have $SOL((K_{\bar x})_\infty, \{\sum_{i=1}^{q}\lambda_i f_i\}^{\infty}_d)=\{0\}$. Thus, \emph{(ii)} follows from Theorem \ref{exist0}.

 \emph{(iii)} Since $f$ is $I$-section-bounded from below at $\bar x$, by Corollary \ref{weakpro} \emph{(ii)}, we have $SOL^{w}(S_{\infty}, f^{\infty}_\mathbf{d})\neq\emptyset$. It follows from the definition of relatively strong regularity that $SOL^{w}(S_{\infty}, f^{\infty}_\mathbf{d})=\{0\}$. By Theorem \ref{exist0}, we have $SOL^{s}(K, f)$ is nonempty.

 \emph{(iv)} Since $f$ is section-bounded from below at $\bar x$, by Corollary \ref{weakpro} \emph{(i)}, we have  $SOL^{s}(S_{\infty}, f^{\infty}_\mathbf{d})\neq\emptyset$. It follows from the definition of relatively weak regularity that $SOL^{s}(S_{\infty}, f^{\infty}_\mathbf{d})=\{0\}$. By Theorem \ref{exist0}, we have $SOL^{s}(K, f)$ is nonempty.
\end{proof}

Let $\bar x\in K$. When $f$ is a convex polynomial mapping on $K$ and $K$ is a convex set, by Corollary \ref{wexist1}, we have the following result.

\begin{corollary}\label{wexist2}
Assume that $K$ is a convex set and $f$ is a convex polynomial mapping on $K$. Let $S=K_\infty\bigcap\{x\in \mathbf{R}^n\mid f(x)\leq f(\bar x)\}_\infty$ with $\bar x\in K$. The following results hold:
\begin{itemize}
	\item [(i)] Assume that the vector polynomial $f$ is relatively $I$-$\mathbf{R}^q_+$-zero-regular with $S$ on $K$ for the some nonempty index set $I\subseteq\{1,2,\dots,q\}$. If $f$ is $I$-section-bounded from below at $\bar x$, then $SOL^{s}(K, f)$ is nonempty;
	\item [(ii)] Assume that the vector polynomial $f$ is relatively $\mathbf{R}^q_+$-zero-regular with $S$ on $K$. If $f$ is section-bounded from below at $\bar x$, then $SOL^{s}(K, f)$ is nonempty;
    \item [(iii)] Assume that the vector polynomial $f$ is relatively strongly regular with $S$ on $K$. If $f$ is $I$-section-bounded from below  at $\bar x$ for the some nonempty index set $I\subseteq\{1,2,\dots,q\}$, then $SOL^{s}(K, f)$ is nonempty;
	\item [(iv)] Assume that the vector polynomial $f$ is relatively weakly regular with $S$ on $K$. If $f$ is section-bounded from at $\bar x$, then $SOL^{s}(K, f)$ is nonempty.
\end{itemize}
\end{corollary}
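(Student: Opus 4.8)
The plan is to reduce Corollary~\ref{wexist2} to Corollary~\ref{wexist1}, exploiting the fact that in the convex setting the prescribed set $S$ coincides, at the level of asymptotic cones, with the sublevel set $K_{\bar x}$. First I would note that $K_{\bar x}=K\cap\{x\in\mathbf{R}^n\mid f(x)\leq f(\bar x)\}$ is a nonempty closed convex set: it contains $\bar x$, the set $K$ is closed and convex by hypothesis, and each sublevel set $\{x\mid f_i(x)\leq f_i(\bar x)\}$ is closed and convex because $f_i$ is a convex polynomial, so their intersection inherits all three properties.

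As recorded in Section~2, for a convex mapping $f$ on a convex set $K$ one has the equality
$(K_{\bar x})_\infty=K_\infty\cap\{x\in\mathbf{R}^n\mid f(x)\leq f(\bar x)\}_\infty=S$, where the first inclusion, valid in general by \cite[Proposition 3.9]{RC}, is upgraded to an equation under convexity. Since $S$ is an intersection of two closed cones it is itself a closed cone, and therefore $S_\infty=S=(K_{\bar x})_\infty$.

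The decisive consequence is that each relative regularity condition ``with $S$ on $K$'' is, by definition, a statement about a solution set taken over $S_\infty$; because $S_\infty=(K_{\bar x})_\infty$, it coincides verbatim with the corresponding relative regularity condition ``with $K_{\bar x}$ on $K$'' appearing in Corollary~\ref{wexist1}. The admissibility requirement $(K_{\bar x})_\infty\subseteq S_\infty\subseteq K_\infty$ is likewise satisfied, with equality on the left. Hence each of the items (i)--(iv) of Corollary~\ref{wexist2} is obtained by quoting the matching item (i)--(iv) of Corollary~\ref{wexist1}, the $I$-section-boundedness (respectively section-boundedness) hypotheses transferring unchanged since they concern $K_{\bar x}$ directly.

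The only point requiring genuine care is the asymptotic-cone identity $(K_{\bar x})_\infty=K_\infty\cap\{x\mid f(x)\leq f(\bar x)\}_\infty$: the inclusion $\subseteq$ holds always, whereas the reverse inclusion can fail without convexity, and it is precisely the convexity of $K$ together with the convexity of each $f_i$ that secures the equality linking the two families of hypotheses. Once this identity is in hand the argument is purely a matter of invoking Corollary~\ref{wexist1}, with no further estimation needed.
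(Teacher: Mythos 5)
Your proposal is correct and follows essentially the same route as the paper: the paper likewise obtains Corollary \ref{wexist2} by simply invoking Corollary \ref{wexist1}, relying on the convexity identity $(K_{\bar x})_\infty=K_\infty\bigcap\{x\in\mathbf{R}^n\mid f(x)\leq f(\bar x)\}_\infty$ recorded in Section 2, which yields $S_\infty=S=(K_{\bar x})_\infty$ so that each relative regularity hypothesis with $S$ coincides with the corresponding hypothesis with $K_{\bar x}$. Your write-up merely makes explicit the closed-cone fact $S_\infty=S$ and the transfer of hypotheses that the paper leaves implicit.
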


When $q=1$, we know that  $I$-section-boundedness from below of $f$ is equivalence to boundedness from below of $f$. Thus,  we have the following result.

\begin{corollary}\label{1028}[Frank-Wolfe type theorem for ${\rm{PSOP}}(K, f)$]The following statements are equivalent:
\begin{itemize}
	\item [(i)] The scalar polynomial $f_1$ satisfies that $f_1$ is bounded from below on $K$ and there exist $\bar x\in K$ and a closed set $S\subseteq\mathbf{R}^n$ satisfying with  $(K_{\bar x})_\infty\subseteq S_\infty\subseteq K_\infty$ such that the scalar polynomial $f_1$ is relatively regular with $S$ on $K$.
	\item [(ii)] $SOL(K, f_1)$ is nonempty and bounded.
\end{itemize}
\end{corollary}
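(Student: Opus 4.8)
The plan is to prove the two implications separately, relying on the scalar case $q=1$, in which $SOL^{s}(K,f)=SOL(K,f_1)$ and, by Remark~\ref{compa}, \emph{relative regularity} of $f_1$ with $S$ on $K$ means precisely that $SOL(S_\infty,(f_1)^\infty_d)$ equals $\{0\}$ or $\emptyset$. The two main tools will be the boundedness criterion for the sublevel set $K_{\bar x}$ in Proposition~\ref{250107a} and the scalar specialization of Proposition~\ref{n1029}.

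For the implication (i)$\Rightarrow$(ii), I would first sharpen the regularity dichotomy to the form $SOL(S_\infty,(f_1)^\infty_d)=\{0\}$. Since $f_1$ is bounded from below on $K$, \cite{HV1} gives $(f_1)^\infty_d\geq 0$ on $K_\infty$; as $S_\infty\subseteq K_\infty$ is a cone containing the origin and $(f_1)^\infty_d(0)=0$, this forces $0\in SOL(S_\infty,(f_1)^\infty_d)$, so the set is nonempty and hence, by relative regularity, equals $\{0\}$. Proposition~\ref{250107a} (condition (i) with $\lambda=1$) then shows $K_{\bar x}$ is bounded, hence compact, so $f_1$ attains a minimizer $x^*$ on $K_{\bar x}$ by Weierstrass' theorem. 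Because every $x\in K\setminus K_{\bar x}$ satisfies $f_1(x)>f_1(\bar x)\geq f_1(x^*)$, the point $x^*$ is a global minimizer over $K$, whence $SOL(K,f_1)\neq\emptyset$; and since every global minimizer lies in $K_{\bar x}$ we get $SOL(K,f_1)\subseteq K_{\bar x}$, which is bounded. (Nonemptiness alone also follows directly from Theorem~\ref{exist0}(i).)

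For the converse (ii)$\Rightarrow$(i), nonemptiness of $SOL(K,f_1)$ already gives that $f_1$ attains, hence is bounded below, on $K$. Picking $\bar x\in SOL(K,f_1)$ makes $f_1(x)\geq f_1(\bar x)$ for all $x\in K$, so $K_{\bar x}=SOL(K,f_1)$, which is bounded by hypothesis; taking $S=K_{\bar x}$ yields $S_\infty=(K_{\bar x})_\infty=\{0\}$, so that $(K_{\bar x})_\infty\subseteq S_\infty\subseteq K_\infty$ and $SOL(S_\infty,(f_1)^\infty_d)=\{0\}$ both hold trivially, i.e. $f_1$ is relatively regular with $S$ on $K$. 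This is exactly the scalar instance of Proposition~\ref{n1029}.

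The only delicate point, and the sole place where the boundedness-from-below hypothesis in (i) is genuinely needed, is excluding the possibility $SOL(S_\infty,(f_1)^\infty_d)=\emptyset$ permitted by the definition of relative regularity: without it one cannot invoke Proposition~\ref{250107a} to bound $K_{\bar x}$, and the infimum may fail to be attained (for instance $f(x)=x$ on $K=\mathbf{R}$, where $SOL(S_\infty,(f_1)^\infty_d)=\emptyset$ yet $SOL(K,f_1)=\emptyset$). Everything else reduces to compactness, Weierstrass' theorem, and the cited results, so I expect no further obstacle.
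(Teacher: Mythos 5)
Your proof is correct and follows essentially the same route as the paper: both directions hinge on using boundedness from below to rule out the case $SOL(S_\infty,(f_1)^\infty_{d_1})=\emptyset$ so that relative regularity yields $SOL(S_\infty,(f_1)^\infty_{d_1})=\{0\}$, then the machinery of Proposition \ref{250107a}/Theorem \ref{exist0} (compactness of $K_{\bar x}$ plus Weierstrass) for (i)$\Rightarrow$(ii), and the scalar instance of Proposition \ref{n1029} with $S=K_{\bar x}$ for (ii)$\Rightarrow$(i). Your boundedness step --- observing $SOL(K,f_1)\subseteq K_{\bar x}$ and quoting the boundedness of $K_{\bar x}$ --- is a slight streamlining of the paper's normalized-sequence contradiction argument, but the underlying content is identical.
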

\begin{proof}
	``\emph{(i)} $\Rightarrow$ \emph{(ii)}": Since $f_1$ is bounded from below on $K$,we have $SOL(S_\infty, (f_1)^{\infty}_{d_1})\neq \emptyset$. Thus, by relative regularity with $S$ on $K$ of $f_1$, we have $SOL(S_\infty, (f_1)^\infty_{d_1})=\{0\}$. It follows from $(K_{\bar x})_\infty\subseteq S_\infty\subseteq K_\infty$ that $SOL(K, f_1)\neq\emptyset$ by Theorem \ref{exist0}. Next, we prove that $SOL(K, f_1)$ is bounded. Suppose on the contrary that there exists $\{x_{k}\}\subseteq SOL(K, f_1)$ such that $\|x_{k}\|\rightarrow +\infty$ as $k\rightarrow +\infty$. Without loss of generality, we can assume that $\|x_{k}\|\neq 0$ and $\frac{x_{k}}{\|x_{k}\|}\rightarrow v_0\in K_{\infty}\backslash\{0\}$. Since $x_{k}\in SOL(K, f_1)$ for all $k$, we have $f_1(x_k)\leq f_1(\bar x)$ for all $k$. Thus, $\{x_{k}\}\subseteq K_{\bar x}$. So $v_0\in (K_{\bar x})_{\infty}\backslash\{0\}\subseteq S_{\infty}\backslash\{0\}$ and
	\begin{equation}\label{1102}
		(f_1)^\infty_{d_1}(v_0)=\lim_{k\to+\infty}\frac{f_1(x_k)}{\|x_k\|^{d_1}}\leq \lim_{k\to+\infty}\frac{f_1(\bar x)}{\|x_k\|^{d_1}}=0.
	\end{equation}
	By $SOL(S_\infty, (f_1)^\infty_{d_1})=\{0\}$ and $(f_1)^\infty_{d_1}(0)=0$, we have $(f_1)^\infty_{d_1}\geq 0$ on $S_\infty$. This together with (\ref{1102}) that  $v_0\in SOL(S_\infty, (f_1)^\infty_{d_1})\backslash\{0\}$, which is a contradiction.\end{proof}
	
	``\emph{(ii)} $\Rightarrow$ \emph{(i)}": Since $SOL(K, f_1)$ is nonempty,  $f_1$ is bounded from below on $K$. Applied Proposition \ref{n1029} to the case $q=1$, we know that there exists a closed set $S\subseteq\mathbf{R}^n$ satisfying with  $(K_{\bar x})_\infty\subseteq S_\infty\subseteq K_\infty$ such that $SOL(S_\infty, (f_1)^\infty_{d_1})=\{0\}$. Thus, $f_1$ is relatively regular with $S$ on $K$.

\begin{remark}
It's worth noting that \cite{Hung} used the tangency values at infinity condition to provide necessary and sufficient conditions of the non-emptiness and compactness of the solution set for a scalar optimization problem. However, Corollary \ref{1028} gives a necessary and sufficient condition for a scalar polynomial optimization problem by utilizing the relative regularity condition.
\end{remark}

\begin{remark}
	If $f_1$ is coercive on $K$, then we know that $SOL(K, f_1)$ is nonempty and bounded. If $f_1$ is regular on $K$ and bounded from below on $K$, we have that $SOL(K, f_1)$ is nonempty and bounded, see \cite[Theorem 3.1]{HV1}. The following example shows that the statement (i) of Corollary \ref{1028} is weaker than the coercivity condition and is also  weaker  than the conditions in \cite[Theorem 3.1]{HV1}. Thus, Corollary \ref{1028} extends and improves \cite[Theorem 3.1]{HV1}.
	
	\begin{example}\label{co1029}
    Consider the polynomial $f_1: \mathbf{R}^2\mapsto \mathbf{R}, f_1(x)=x_1x^2_2-x_1x_2$ and
    $$K=\{(x_{1}, x_{2})\in \mathbf{R}^{2}: x_{2}\geq x_{1}\geq 0\}.$$
    Let $\bar x=(\frac{1}{2},\frac{1}{2})$. Then,
    \begin{equation*}
    	\begin{split}
    		K_{\bar x}&=\{(x_{1}, x_{2})\in \mathbf{R}^{2}: x_{2}\geq x_{1}\geq 0, f_1(x)\leq f_1(\bar x)\}\\
    		&=\{(x_{1}, x_{2})\in \mathbf{R}^{2}: x_{2}\geq x_{1}\geq 0, x_1x_2(x_2-1)\leq -\frac{1}{8}\}
    	\end{split}
    \end{equation*}
    It is easy to prove that $f_1$ is bounded from below on $K$. We assert $x_2-1<0$ for any $(x_{1}, x_{2})\in K_{\bar x}$. Otherwise, there exists $(z_1, z_2)\in K_{\bar x}$ such that $z_2-1\geq 0$. Since $z_{2}\geq z_{1}\geq 0$, we have $z_1z_2(z_2-1)\geq 0$, which is a contradiction with $(z_1, z_2)\in K_{\bar x}$. Thus, we have $0\leq x_1\leq x_2<1$ for any $(x_{1}, x_{2})\in K_{\bar x}$. So $K_{\bar x}$ is bounded, and so $(K_{\bar x})_\infty=\{(0,0)\}$. Therefore, $SOL((K_{\bar x})_{\infty}, (f_1)^\infty_{d_1})=\{(0,0)\}$. So $f$ is relatively regular with $S=K_{\bar x}$ on $K$. By Corollary \ref{1028}, we have that $SOL(K, f_1)$ is nonempty and bounded. On the one hand, let $x_n=(\frac{1}{n(n-1)}, n), n\geq 2$. Then $x_n\in K$ and $\|x_n\|\to +\infty$ as $n\to +\infty$. However, $\lim_{n\to +\infty}f(x_n)=1$. Thus, $f$ is not coercive on $K$. On the other hand, $(f_1)^\infty_{d_1}(x)=x_1x^2_2\geq 0$ on $K$ and $K_\infty=K$. So $SOL(K_{\infty}, (f_1)^\infty_{d_1})=\{(x_{1}, x_{2})\in \mathbf{R}^{2}: x_{2}\geq 0, x_{1}=0\}$ is a unbounded set. Thus, $f$ is non-regular on $K$.
 \end{example}
\end{remark}

Finally, we give an application of the existence of Pareto efficient solutions for the polynomial vector optimization problems with the closed constraint set, directly. By Corollaries \ref{relationship} and \ref{wexist1}, we have the following result.

\begin{corollary}
	Assume that there exist $\bar x\in K$ and nonempty index set $I\subseteq\{1,2,\dots,q\}$ such that the vector polynomial $f$ is $I$-section-bounded from below at $\bar x$. Then $\rm{PVOP}$$(K, f)$ admits at least one Pareto efficient solution, if one of the following equivalent conditions holds:
	\begin{itemize}
    \item[(i)] $f|_{K}$ is relatively $I$-$\mathbf{R}^q_+$-zero-regular with $K_{\bar x}$ on $K$;
    \item[(ii)] $f|_{K}$ $f$ is relatively strongly regular with $K_{\bar x}$ on $K$;
    \item[(iii)] $f|_{K}$ $f$ on $K$ is $I$-proper  at the sublevel $f(\bar x)$;
	\item[(iv)] $f|_{K}$ satisfies the $I$-Palais-Smale condition at the sublevel $f(\bar x)$;
	\item[(v)] $f|_{K}$ satisfies the weak $I$-Palais-Smale condition at the sublevel $f(\bar x)$;
    \item[(vi)] $f|_{K}$ satisfies $I$-M-tame condition at the sublevel $f(\bar x)$.
    \end{itemize}
    In particular, if $I=\{1,2,\dots,q\}$ and $f$ is relatively weakly regular with $K_{\bar x}$ on $K$, then the Pareto efficient solution set of $\rm{PVOP}$$(K, f)$ is also nonempty.
\end{corollary}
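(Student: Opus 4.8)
The plan is to obtain the statement as an assembly of the two results named in the hint, with essentially no new analysis required. First I would dispose of the equivalence of (i)--(vi). Under the standing hypothesis that $f$ is $I$-section-bounded from below at $\bar x$, Corollary \ref{relationship} already asserts that $I$-properness at the sublevel $f(\bar x)$, relative $I$-$\mathbf{R}^q_+$-zero-regularity, relative strong regularity, and the $I$-Palais--Smale, weak $I$-Palais--Smale and $I$-M-tame conditions are all equivalent for a suitable closed set $S$ with $(K_{\bar x})_\infty\subseteq S_\infty\subseteq K_\infty$. The six conditions displayed here are exactly this list (omitting only relative weak regularity), so once the role of $S$ is settled the equivalence is immediate.

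The one point deserving attention is that the present statement fixes the auxiliary set to $S=K_{\bar x}$, whereas Corollary \ref{relationship} quantifies $S$ existentially; this is the only genuine obstacle, and it is mild. I would resolve it by recalling from the proof of Theorem \ref{2410233} that the witness set may be taken to be precisely $K_{\bar x}$, together with the ``Moreover'' clause of Corollary \ref{relationship}, which says that each of the conditions forces $\{x\in K\mid f(x)\leq f(\bar x)\}$ to be compact, i.e.\ $(K_{\bar x})_\infty=\{0\}$. When $K_{\bar x}$ is bounded one has $S_\infty=(K_{\bar x})_\infty=\{0\}$, so relative regularity with $S=K_{\bar x}$ holds automatically; hence each of (i)--(vi) is in fact equivalent to the single assertion that $K_{\bar x}$ is compact, and in particular they are mutually equivalent with the fixed choice $S=K_{\bar x}$.

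For the existence conclusion I would invoke Corollary \ref{wexist1}. Starting from (i), the vector polynomial $f$ is relatively $I$-$\mathbf{R}^q_+$-zero-regular with $K_{\bar x}$ on $K$ and is $I$-section-bounded from below at $\bar x$, so Corollary \ref{wexist1}(i) yields $SOL^{s}(K,f)\neq\emptyset$; by the equivalences just established, every one of (i)--(vi) then forces $SOL^{s}(K,f)\neq\emptyset$, so $\mathrm{PVOP}(K,f)$ admits a Pareto efficient solution. Finally, for the ``in particular'' clause I would specialize $I=\{1,\dots,q\}$: by Remark \ref{20250227a} the $I$-section-boundedness from below collapses to ordinary section-boundedness from below at $\bar x$, so relative weak regularity with $K_{\bar x}$ on $K$ puts us in the hypotheses of Corollary \ref{wexist1}(iv), which again gives $SOL^{s}(K,f)\neq\emptyset$.
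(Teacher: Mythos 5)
Your proposal is correct and follows essentially the same route as the paper: the paper states this corollary as a direct consequence of Corollaries \ref{relationship} and \ref{wexist1} with no further argument, which is exactly the assembly you carry out. Your extra step --- observing that under the standing $I$-section-boundedness hypothesis each of (i)--(vi) is equivalent to compactness of $K_{\bar x}$, so that the existentially quantified witness $S$ in Corollary \ref{relationship} can legitimately be fixed to $K_{\bar x}$ --- fills in a detail the paper leaves implicit, and is a sound (indeed welcome) refinement rather than a deviation.
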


\subsection{Existence for ${\rm{PVOP}} (K, f)$ under the relative non-regularity conditions}
In this subsection, we investigate the existence of the efficient solutions for polynomial vector optimization problems  on a nonempty closed set  without any convexity and compactness assumptions under the relatively non-regularity conditions.

\begin{theorem}\label{241227c}
If the following conditions hold:
\begin{itemize}
	\item [(i)] For any $x\in K$ and nonempty closed set $S\subseteq\mathbf{R}^n$ satisfying with  $(K_{\bar x})_\infty\subseteq S_\infty\subseteq K_\infty$ such that the set $SOL^{w}(S_\infty, f^{\infty}_\mathbf{d})$ is unbounded, this is, the vector polynomial $f$ is relatively strongly non-regular on $K$.
	\item [(ii)] And for every $v\in SOL^{w}(S_\infty, f^{\infty}_\mathbf{d})\setminus\{0\}$, there exists $t>0$ such that $x-tv\in K$ and $f(x-tv)\leq f(x)$ for all $x\in S$.
\end{itemize}
Then $SOL^{s}(K, f)$ is nonempty.
\end{theorem}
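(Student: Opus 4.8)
The plan is to reduce the claim to the attainment of a strictly positive scalarization on the sublevel set $K_{\bar x}$ and then to absorb the unboundedness of $K_{\bar x}$---which is forced by the relative strong non-regularity in (i)---using the feasible, non-increasing slides supplied by (ii). Fix $\bar x\in K$ and set $S=K_{\bar x}$, so that $S_\infty=(K_{\bar x})_\infty\subseteq\{x\in\mathbf{R}^n\mid f^\infty_\mathbf{d}(x)\le 0\}$. Choose $\alpha=(\alpha_1,\dots,\alpha_q)\in{\rm int}\,\mathbf{R}^q_+$ and put $g_\alpha=\sum_{i=1}^q\alpha_i f_i$. Because $SOL(K_{\bar x},g_\alpha)\subseteq SOL^{s}(K,f)$ by \cite[Proposition 13]{MG}, it suffices to show that $g_\alpha$ attains its infimum $m:=\inf_{x\in K_{\bar x}}g_\alpha(x)$ on $K_{\bar x}$.

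First I would take a minimizing sequence $\{x_k\}\subseteq K_{\bar x}$ with $g_\alpha(x_k)\to m$. If $\{x_k\}$ is bounded, then since $K_{\bar x}$ is closed a cluster point $x^*\in K_{\bar x}$ satisfies $g_\alpha(x^*)=m$ by continuity, so $x^*\in SOL(K_{\bar x},g_\alpha)\subseteq SOL^{s}(K,f)$ and we are finished. Otherwise $\|x_k\|\to+\infty$ along a subsequence; normalizing, $x_k/\|x_k\|\to w$ with $w\in S_\infty\setminus\{0\}$, and dividing $f_i(x_k)\le f_i(\bar x)$ by $\|x_k\|^{d_i}$ and letting $k\to+\infty$ yields $(f_i)^\infty_{d_i}(w)\le 0$ for every $i$.

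Granting for the moment that $w\in SOL^{w}(S_\infty,f^\infty_\mathbf{d})\setminus\{0\}$, hypothesis (ii) provides $t>0$ with $x-tw\in K$ and $f(x-tw)\le f(x)$ for all $x\in K_{\bar x}$. Then $x-tw\in K_{\bar x}$ and $g_\alpha(x-tw)\le g_\alpha(x)$ (as $\alpha\in{\rm int}\,\mathbf{R}^q_+$), so $T(x):=x-tw$ is a self-map of $K_{\bar x}$ that does not increase $g_\alpha$; consequently each iterate $x_k-n t w\in K_{\bar x}$ with $g_\alpha(x_k-ntw)\le g_\alpha(x_k)$ is still minimizing. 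Choosing $n=n_k\in\mathbf{N}$ so as to project $x_k$ back along $w$ removes the component of $x_k$ in the direction $w$, and one then re-examines whether the resulting sequence is bounded.

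The hard part is exactly the two points left open above. First, I must show that the escape direction $w$ is a genuine nonzero weak efficient recession direction, and I would do this by contradiction: if not, there is $y\in S_\infty$ with $(f_i)^\infty_{d_i}(y)<(f_i)^\infty_{d_i}(w)\le 0$ for all $i$, whence positive homogeneity makes every component of $f^\infty_\mathbf{d}$ tend to $-\infty$ along the ray $\{\tau y:\tau>0\}\subseteq S_\infty$; I would play this against the minimizing role of $\{x_k\}$ and the non-increasing slides of (ii), invoking $S_\infty\subseteq\{f^\infty_\mathbf{d}\le 0\}$ together with Proposition \ref{necessary} and Theorem \ref{2410122} rather than the recession of $g_\alpha$ alone, since the leading term of $g_\alpha$ only records the top-degree components. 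Second, subtracting the single direction $w$ controls only the $w$-component of $x_k$, while the transverse part may remain unbounded; to close this gap I would iterate the construction, peeling off one weak efficient recession direction at a time by induction on $\dim({\rm span}\,S_\infty)$ until the residual minimizing sequence is bounded, at which point the bounded case yields a minimizer of $g_\alpha$ that is Pareto efficient by \cite[Proposition 13]{MG}.
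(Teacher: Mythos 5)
Your reduction coincides with the paper's: fix $\bar x$, set $S=K_{\bar x}$ (so $S_\infty\subseteq\{x\in\mathbf{R}^n\mid f^\infty_{\mathbf{d}}(x)\le 0\}$), scalarize with $\alpha\in{\rm int}\,\mathbf{R}^q_+$, use $SOL(K_{\bar x},g_\alpha)\subseteq SOL^{s}(K,f)$, and extract an escape direction $w\in S_\infty\setminus\{0\}$ with $(f_i)^\infty_{d_i}(w)\le 0$ from an unbounded sequence. But the two points you yourself flag as ``the hard part'' are exactly where your text stops being a proof. The first is in fact not hard, and your sketch of it is a detour: if $w\notin SOL^{w}(S_\infty,f^\infty_{\mathbf{d}})$, the dominating point $y$ satisfies $(f_i)^\infty_{d_i}(y)<(f_i)^\infty_{d_i}(w)\le 0$ for every $i$, hence $0\notin SOL^{w}(S_\infty,f^\infty_{\mathbf{d}})$, and Proposition \ref{necessary}\,(ii) (applicable precisely because $S_\infty\subseteq\{f^\infty_{\mathbf{d}}\le 0\}$) then gives $SOL^{w}(S_\infty,f^\infty_{\mathbf{d}})=\emptyset$, contradicting the unboundedness assumed in (i). Neither the minimizing property of $\{x_k\}$ nor the slides of (ii) enter this step, so your plan to ``play'' them against the ray through $y$ is solving a problem that is not there.

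The second gap is the genuine one, and the paper closes it with a device absent from your proposal: rather than an arbitrary minimizing sequence, it minimizes $\sum_{i}\lambda_i f_i$ over the compact truncations $K_{\bar x}\cap k\mathbf{B}$ (Weierstrass) and takes $x_k$ to be a minimizer of \emph{least norm}. If $\|x_k\|\to+\infty$, the escape direction $v_0$ lies in $SOL^{w}(S_\infty,f^\infty_{\mathbf{d}})\setminus\{0\}$ as above, and the slide $x_k-t_0v_0$ from (ii) is again in $K_{\bar x}$, is again optimal on $K_{\bar x}\cap k\mathbf{B}$, and satisfies $\|x_k-t_0v_0\|\le\|x_k\|+t_0\left(\left\|\tfrac{x_k}{\|x_k\|}-v_0\right\|-1\right)<\|x_k\|$ for large $k$; this contradicts the least-norm selection in one stroke, so $\{x_k\}$ is bounded and its limit is a minimizer of the scalarization on $K_{\bar x}$, hence a Pareto solution via $SOL(K_{\bar x},\sum_i\lambda_i f_i)\subseteq SOL^{s}(K_{\bar x},f)\subseteq SOL^{s}(K,f)$. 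Your alternative---peel off one direction at a time and induct on $\dim({\rm span}\,S_\infty)$---can likely be completed: after you reduce the $w$-component of $x_k$ to a bounded range, any new escape direction $w'$ satisfies $\langle w',w\rangle=0$, and later slides, being along directions orthogonal to the earlier ones, preserve the boundedness of the earlier components, so the process must stop within $n$ stages. But none of this orthogonality bookkeeping, on which the termination of your induction rests, appears in your write-up; as submitted, the step on which the whole theorem turns is a plan, not an argument.
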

\begin{proof}
Let $\bar x\in K$ and $S=K_{\bar x}$. Then $(K_{\bar x})_\infty\subseteq S_\infty\subseteq K_\infty$. For all sufficiently large $k$, we can know that $S\cap k\mathbf{B}\neq\emptyset$. Let $\lambda=(\lambda_1,\lambda_2,\dots,\lambda_q)\in {\rm{ int }} \mathbf{R}^q_+$. Consider the following optimization problems:
$${\rm{POP}}(S\cap k\mathbf{B}, \sum^{q}_{i=1}\lambda_i f_i): \min_{x\in S\cap k\mathbf{B}}\sum^{q}_{i=1}\lambda_i f_i(x).$$
Clearly, $S\cap k\mathbf{B}$ is compact.  According to Weierstrass' Theorem, ${\rm{POP}}(S\cap k\mathbf{B}, \sum^{q}_{i=1}\lambda_i f_i)$ has a solution. We set
\begin{equation}\label{241226b}
	\|x_k\|=\min\{x\vert x\in SOL(S\cap k\mathbf{B}, \sum^{q}_{i=1}\lambda_i f_i)\}.	
\end{equation}
We claim that $\{x_k\}$ is bounded. Supposed on the contrary that $\|x_{k}\|\rightarrow +\infty$ as $k\rightarrow +\infty$. Without loss of generality, we can assume that $\|x_{k}\|\neq 0$ and $\frac{x_{k}}{\|x_{k}\|}\rightarrow v_0\in S_{\infty}\backslash\{0\}$. For a fixed $x_0\in S$, we have $x_0\in S\cap k\mathbf{B}$ for $k$ large enough. Since $x_k\in S=K_{\bar x}$ for all $k$, we have $f_i(x_k)\leq f_i(\bar x)$ for each $i\in\{1,2,\dots,q\}$. Dividing the both sides of these inequalities by $\|x_k\|^{d_i}$  and letting $k\rightarrow +\infty$, we get that
\begin{equation}\label{241226c}
		(f_i)^\infty_{d_i}(v_0)\leq 0, i\in\{1,2,\dots,q\}.
\end{equation}
By condition \emph{(i)}, we have $SOL^{w}(S_\infty, f^\infty_\mathbf{d})\neq\emptyset$. It follows from Proposition \ref{necessary} that $0\in SOL^{w}(S_\infty, f^\infty_\mathbf{d})$. If  $v_0\notin SOL^{w}(S_\infty, f^\infty_\mathbf{d})\setminus\{0\}$, then there exists $v'\in S_\infty$ such that $(f_{i})^\infty_{d_{i}}(v')<(f_{i})^\infty_{d_{i}}(v_0)$ for all $\{1,2,\dots,q\}$. So $0\notin SOL^{w}(S_\infty, f^\infty_\mathbf{d})$ by inequalities (\ref{241226c}), which is a contradiction with $0\in SOL^{w}(S_\infty, f^\infty_\mathbf{d})$. Thus, we have $v_0\in SOL^{W}(S_\infty, f^\infty_\mathbf{d})\setminus\{0\}$. By condition \emph{(ii)}, we have that there exists $t_0>0$ such that $f(x-t_0v_0)\leq f(x)$ for all $x\in S$. And it follows from $\sum_{i=1}^{q}\lambda_i f_i(x_k)\leq \sum_{i=1}^{q}\lambda_i f_i(x)$ for all $x\in S\cap k\mathbf{B}$ (since $x_k\in SOL(S\cap k\mathbf{B}, \sum^{s}_{i=1}\lambda_i f_i)$ for all sufficiently large $k$) that $\sum_{i=1}^{q}\lambda_i f_i(x_k-t_0v_0)\leq \sum_{i=1}^{q}\lambda_i f_i(x)$ for all $x\in S\cap k\mathbf{B}$.
Since $\{x_k\}\subseteq S\cap k\mathbf{B}\subseteq S$ and $f(x_k-t_0v_0)\leq f(x_k)$ for all $k$, we have $f(x_k-t_0v_0)\leq f(\bar x)$ for all $k$. It follows from $\{x_k-t_0v_0\}\subseteq K$ that $\{x_k-t_0v_0\}\subseteq S$.
For all $k$ large enough such that $0<\frac{t_0}{\|x_k\|}<1$ and $\|\frac{x_k}{\|x_k\|}-v_0\|<1$, we have
\begin{equation*} 
\begin{split}
\|x_k-t_0v_0\|&=\|(1-\frac{t_0}{\|x_k\|})x_k+t_0(\frac{x_k}{\|x_k\|}-v_0)\|\\
&\leq(1-\frac{t_0}{\|x_k\|})\|x_k\|+t_0\|\frac{x_k}{\|x_k\|}-v_0\|\\
&=\|x_k\|+t_0(\|\frac{x_k}{\|x_k\|}-v_0\|-1).
\end{split}
\end{equation*}
Thus, $\|x_k-t_0v_0\|<\|x_k\|\leq k$. So $x_k-t_0v_0\in S\cap k\mathbf{B}$ for all $k$ large enough. Therefore, $x_k-t_0v_0\in SOL(S\cap k\mathbf{B}, \sum^{q}_{i=1}\lambda_i f_i)$, which is a  contradiction with (\ref{241226b}). So the sequence $\{x_k\}$ is bounded. Without loss of generality, we assume that $\|x_{k}\|\to x_0$ as $k\to +\infty$. We claim that $x_0\in SOL(S, \sum^{q}_{i=1}\lambda_i f_i)$. If not, then there exists $x_1\in S$ such that $\sum^{q}_{i=1}\lambda_i f_i(x_1)<\sum^{q}_{i=1}\lambda_i f_i(x_0)$. Then for all $k$ large enough, we have $x_1\in S\cap k\mathbf{B}$ and $\sum^{q}_{i=1}\lambda_i f_i(x_1)<\sum^{q}_{i=1}\lambda_i f_i(x_k)$, which is a  contradiction with $x_k\in SOL(S\cap k\mathbf{B}, \sum^{q}_{i=1}\lambda_i f_i)$ for all sufficiently large $k$. So $x_0\in SOL(S, \sum^{q}_{i=1}\lambda_i f_i)$. By \cite[Proposition 3.2]{24LDY3}, we deduce $SOL(S, \sum^{q}_{i=1}\lambda_i f_i)\subseteq SOL^{s}(S, f)\subseteq SOL^{s}(K, f)$. Thus, $SOL^{s}(K, f)$ is nonempty.\end{proof}

For any $x\in K$, the nonempty closed set $S\subseteq\mathbf{R}^n$ satisfying with  $(K_{\bar x})_\infty\subseteq S_\infty\subseteq K_\infty$ and $\lambda=(\lambda_1,\lambda_2,\dots,\lambda_q)\in \mathbf{R}^q_+\backslash\{0\}$, we know that $SOL(S_\infty, \{\sum_{i=1}^{q}\lambda_i f_i\}^{\infty}_d)\subseteq SOL^{w}(S_\infty, f^{\infty}_\mathbf{d})$ and $SOL^{s}(S_\infty, f^{\infty}_\mathbf{d})\subseteq SOL^{w}(S_\infty, f^{\infty}_\mathbf{d})$. Thus, if $SOL(S_\infty, \{\sum_{i=1}^{q}\lambda_i f_i\}^{\infty}_d)$ and $SOL^{s}(S_\infty, f^{\infty}_\mathbf{d})$ are unbounded, then $SOL^{w}(S_\infty, f^{\infty}_\mathbf{d})$ is unbounded. So by Theorem \ref{241227c}, we have the following two results.

\begin{corollary}\label{241227a}
If the following conditions hold:
\begin{itemize}
	\item [(i)] For any $x\in K$, the nonempty closed set $S\subseteq\mathbf{R}^n$ satisfying with  $(K_{\bar x})_\infty\subseteq S_\infty\subseteq K_\infty$ and $\lambda=(\lambda_1,\lambda_2,\dots,\lambda_q)\in \mathbf{R}^s_+\backslash\{0\}$ with index set $I=\{i\in \{1,2,\dots,q\}\vert\lambda_i\neq 0\}$ such that $SOL(S_\infty, \{\sum_{i=1}^{q}\lambda_i f_i\}^{\infty}_d)$ with $d=\deg \sum_{i=1}^{q}\lambda_i f_i$ is unbounded, this is, the vector polynomial $f$ is relatively $I$-$\mathbf{R}^q_+$-zero-non-regular on $K$.
	\item [(ii)] And for every $v\in SOL(S_\infty, \{\sum_{i=1}^{q}\lambda_i f_i\}^{\infty}_d)\setminus\{0\}$, there exists $t>0$ such that $x-tv\in K$ and  $f(x-tv)\leq f(x)$ for all $x\in S$.
\end{itemize}
Then $SOL^{s}(K, f)$ is nonempty.
\end{corollary}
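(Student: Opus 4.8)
The plan is to run the truncation-and-least-norm argument of Theorem \ref{241227c} almost verbatim, and to reduce the hypotheses of the corollary to those of that theorem through the inclusion $SOL(S_\infty,\{\sum_{i=1}^{q}\lambda_i f_i\}^{\infty}_d)\subseteq SOL^{w}(S_\infty,f^{\infty}_{\mathbf{d}})$ recorded just above its statement. Concretely, I would fix $\bar x\in K$, set $S=K_{\bar x}$, choose strictly positive scalarizing weights $\alpha=(\alpha_1,\dots,\alpha_q)\in\operatorname{int}\mathbf{R}^q_+$ (independent of the weight vector $\lambda$ appearing in the hypotheses), and consider the compact problems $\min_{x\in S\cap k\mathbf{B}}\sum_{i=1}^{q}\alpha_i f_i(x)$, selecting for each large $k$ a minimizer $x_k$ of least norm. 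Weierstrass' theorem guarantees these exist, and the whole argument hinges on showing $\{x_k\}$ is bounded; once this is done, a limit point $x_0$ minimizes $\sum_{i=1}^{q}\alpha_i f_i$ over $S$, and since $\alpha\in\operatorname{int}\mathbf{R}^q_+$, the scalarization result \cite[Proposition 3.2]{24LDY3} places $x_0$ in $SOL(S,\sum_{i=1}^{q}\alpha_i f_i)\subseteq SOL^{s}(S,f)\subseteq SOL^{s}(K,f)$, giving the claim. Note that using interior weights $\alpha$ rather than the hypothesis vector $\lambda$ (which may have zero components) is essential to land in $SOL^{s}$ rather than merely $SOL^{w}$.

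To prove boundedness I would argue by contradiction as in Theorem \ref{241227c}: if $\|x_k\|\to+\infty$, pass to $v_0=\lim x_k/\|x_k\|\in S_\infty\setminus\{0\}$, and divide the defining inequalities $f_i(x_k)\le f_i(\bar x)$ by $\|x_k\|^{d_i}$ to obtain $(f_i)^{\infty}_{d_i}(v_0)\le 0$ for every $i$. The decisive point---and the only place the descent hypothesis is used---is to show that this particular $v_0$ lies in $SOL(S_\infty,\{\sum_{i=1}^{q}\lambda_i f_i\}^{\infty}_d)$, so that hypothesis (ii), stated only over this smaller set, applies. For this I would use that $g:=\{\sum_{i=1}^{q}\lambda_i f_i\}^{\infty}_d=\sum_{i\in I,\,d_i=d}\lambda_i (f_i)^{\infty}_{d_i}$ is homogeneous of degree $d\ge 1$ and that $S_\infty$ is a cone: since hypothesis (i) makes $SOL(S_\infty,g)$ nonempty (indeed unbounded), $g$ cannot take a negative value on $S_\infty$, for $g(sw)=s^d g(w)\to-\infty$ would preclude a minimizer; hence $g\ge 0$ on $S_\infty$ and $SOL(S_\infty,g)=\{w\in S_\infty: g(w)=0\}$. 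Because $\lambda_i>0$ for $i\in I$ and $(f_i)^{\infty}_{d_i}(v_0)\le 0$, we get $g(v_0)\le 0$, and therefore $g(v_0)=0$, i.e. $v_0\in SOL(S_\infty,g)\setminus\{0\}$.

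With $v_0$ thus located, hypothesis (ii) yields $t_0>0$ with $x_k-t_0 v_0\in K$ and $f(x_k-t_0 v_0)\le f(x_k)$; since $f(x_k-t_0 v_0)\le f(x_k)\le f(\bar x)$ this also puts $x_k-t_0 v_0$ back in $S=K_{\bar x}$. Summing against $\alpha$ and using the minimality of $x_k$ gives $\sum_{i=1}^{q}\alpha_i f_i(x_k-t_0 v_0)\le\sum_{i=1}^{q}\alpha_i f_i(x)$ for all $x\in S\cap k\mathbf{B}$, while the elementary estimate $\|x_k-t_0 v_0\|<\|x_k\|\le k$ (valid for large $k$) shows $x_k-t_0 v_0\in S\cap k\mathbf{B}$. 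This exhibits a minimizer of strictly smaller norm, contradicting the least-norm choice and proving $\{x_k\}$ bounded.

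I expect the main obstacle to be precisely the gap between the two descent hypotheses: Theorem \ref{241227c} requires the descent property over all of $SOL^{w}(S_\infty,f^{\infty}_{\mathbf{d}})\setminus\{0\}$, whereas the present corollary only assumes it over the generally smaller set $SOL(S_\infty,\{\sum_{i=1}^{q}\lambda_i f_i\}^{\infty}_d)\setminus\{0\}$. A black-box appeal to the theorem is therefore not available, and the reduction succeeds only because the single limiting direction $v_0$ produced by the proof is forced into the smaller set by the homogeneity/cone argument above. The remaining ingredients---existence of the truncated minimizers, the norm estimate, and the final scalarization inclusion---are routine and run parallel to Theorem \ref{241227c}.
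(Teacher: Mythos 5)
Your proof is correct, and it is in fact more careful than the paper's own treatment. The paper disposes of this corollary in one line: it records the inclusion $SOL(S_\infty,\{\sum_{i=1}^{q}\lambda_i f_i\}^{\infty}_d)\subseteq SOL^{w}(S_\infty,f^{\infty}_{\mathbf{d}})$, observes that unboundedness therefore transfers from the smaller set to the larger one, and then invokes Theorem \ref{241227c} as a black box. As you point out, that reduction only verifies hypothesis (i) of the theorem; hypothesis (ii) of the theorem demands descent along every $v\in SOL^{w}(S_\infty,f^{\infty}_{\mathbf{d}})\setminus\{0\}$, whereas the corollary supplies descent only on the smaller set $SOL(S_\infty,\{\sum_{i=1}^{q}\lambda_i f_i\}^{\infty}_d)\setminus\{0\}$, so the citation does not literally apply. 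Your repair---re-running the truncation/least-norm argument and showing via homogeneity of $g$ and the cone structure of $S_\infty$ that $g\geq 0$ on $S_\infty$, hence that the limiting direction $v_0$ satisfies $g(v_0)=0$ and lies in $SOL(S_\infty,g)\setminus\{0\}$, where the weaker descent hypothesis is available---is exactly the missing ingredient, and the rest of your argument (interior weights $\alpha$ in the truncated problems, the norm estimate, the scalarization inclusions via \cite[Proposition 3.2]{24LDY3}) tracks the proof of Theorem \ref{241227c} faithfully. One small caveat: your identity $\{\sum_{i=1}^{q}\lambda_i f_i\}^{\infty}_d=\sum_{i\in I,\,d_i=d}\lambda_i(f_i)^{\infty}_{d_i}$ presumes that the leading forms of the $f_i$, $i\in I$, of top degree do not cancel (otherwise $d<\max_{i\in I}d_i$ and the identity fails); the paper makes the same implicit assumption (for instance in the proof of Proposition \ref{250107a}), but you can avoid it entirely by dividing the scalarized inequality $\sum_{i\in I}\lambda_i f_i(x_k)\leq\sum_{i\in I}\lambda_i f_i(\bar x)$ by $\|x_k\|^{d}$ and letting $k\to+\infty$, which yields $g(v_0)\leq 0$ directly without decomposing $g$.
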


\begin{corollary}\label{241227b}
If the following conditions hold:
\begin{itemize}
	\item [(i)] For any $x\in K$ and  nonempty closed set $S\subseteq\mathbf{R}^n$ satisfying with  $(K_{\bar x})_\infty\subseteq S_\infty\subseteq K_\infty$ such that the set $SOL^{s}(S_\infty, f^{\infty}_\mathbf{d})$ is unbounded, this is, the vector polynomial $f$ is relatively weakly non-regular on $K$.
	\item [(ii)] And for every $v\in SOL^{s}(S_\infty, f^{\infty}_\mathbf{d})\setminus\{0\}$, there exists $t>0$ such that $x-tv\in K$ and $f(x-tv)\leq f(x)$ for all  $x\in S$.
\end{itemize}
Then $SOL^{s}(K, f)$ is nonempty.
\end{corollary}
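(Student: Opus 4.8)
The plan is to re-run the scalarization-and-compactification argument of Theorem \ref{241227c} almost verbatim, with the \emph{strong} efficient set $SOL^{s}(S_\infty, f^{\infty}_\mathbf{d})$ taking over the role played there by the weak one. Fix $\bar x\in K$ and set $S=K_{\bar x}$, so that $(K_{\bar x})_\infty\subseteq S_\infty\subseteq K_\infty$ and $S_\infty=(K_{\bar x})_\infty\subseteq\{x\in\mathbf{R}^n\mid f^\infty_\mathbf{d}(x)\leq 0\}$. Condition (i) makes $SOL^{s}(S_\infty, f^{\infty}_\mathbf{d})$ nonempty, and since $S_\infty\subseteq\{x\in\mathbf{R}^n\mid f^\infty_\mathbf{d}(x)\leq 0\}$, Proposition \ref{necessary} (i) gives $0\in SOL^{s}(S_\infty, f^{\infty}_\mathbf{d})$. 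Fixing $\lambda=(\lambda_1,\dots,\lambda_q)\in{\rm{int}}\,\mathbf{R}^q_+$, for each large $k$ I would minimize $\sum_{i=1}^q\lambda_i f_i$ over the compact set $S\cap k\mathbf{B}$ (solvable by Weierstrass' theorem) and select a minimizer $x_k$ of least norm.

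The heart of the argument is to show that $\{x_k\}$ is bounded. Suppose not; after passing to a subsequence, $x_k/\|x_k\|\to v_0\in S_\infty\setminus\{0\}$, and dividing $f_i(x_k)\leq f_i(\bar x)$ by $\|x_k\|^{d_i}$ and letting $k\to+\infty$ yields $(f_i)^\infty_{d_i}(v_0)\leq 0$ for every $i$. The key step --- and the only place this proof genuinely departs from that of Theorem \ref{241227c} --- is to upgrade $v_0$ from weak to \emph{strong} efficiency. Indeed, were $v_0\notin SOL^{s}(S_\infty, f^{\infty}_\mathbf{d})$, there would be $v'\in S_\infty$ with $(f_i)^\infty_{d_i}(v')\leq(f_i)^\infty_{d_i}(v_0)$ for all $i$ and strict inequality for some $i_0$; combined with $(f_i)^\infty_{d_i}(v_0)\leq 0$ this gives $f^\infty_\mathbf{d}(v')\in-\mathbf{R}^q_+\setminus\{0\}$, and since $f^\infty_\mathbf{d}(0)=0$ this says $f^\infty_\mathbf{d}(v')-f^\infty_\mathbf{d}(0)\in-\mathbf{R}^q_+\setminus\{0\}$, contradicting $0\in SOL^{s}(S_\infty, f^{\infty}_\mathbf{d})$. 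Hence $v_0\in SOL^{s}(S_\infty, f^{\infty}_\mathbf{d})\setminus\{0\}$, so condition (ii) \emph{does} apply to it and supplies $t_0>0$ with $x-t_0v_0\in K$ and $f(x-t_0v_0)\leq f(x)$ for all $x\in S$.

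The remainder is mechanical and copies Theorem \ref{241227c}: from $x_k\in SOL(S\cap k\mathbf{B},\sum_{i=1}^q\lambda_i f_i)$ and $f(x_k-t_0v_0)\leq f(x_k)\leq f(\bar x)$ one gets $x_k-t_0v_0\in S$, while the estimate $\|x_k-t_0v_0\|\leq\|x_k\|+t_0\big(\|\frac{x_k}{\|x_k\|}-v_0\|-1\big)<\|x_k\|\leq k$ for $k$ large places $x_k-t_0v_0$ in $S\cap k\mathbf{B}$ with strictly smaller norm, contradicting the minimal-norm choice of $x_k$. Thus $\{x_k\}$ is bounded, and any limit point $x_0$ lies in $SOL(S,\sum_{i=1}^q\lambda_i f_i)$; since $\lambda\in{\rm{int}}\,\mathbf{R}^q_+$, the inclusions $SOL(S,\sum_{i=1}^q\lambda_i f_i)\subseteq SOL^{s}(S,f)\subseteq SOL^{s}(K,f)$ from \cite[Proposition 3.2]{24LDY3} yield $SOL^{s}(K,f)\neq\emptyset$.

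I expect the only real obstacle to be this upgrade step: it is exactly what allows the argument to run under the \emph{weaker} hypothesis (ii) assumed here --- descent required only along $v\in SOL^{s}(S_\infty, f^{\infty}_\mathbf{d})\setminus\{0\}$ rather than along all of $SOL^{w}(S_\infty, f^{\infty}_\mathbf{d})\setminus\{0\}$ --- so that it is the \emph{proof} of Theorem \ref{241227c}, rather than its statement applied as a black box, that we reuse. The upgrade rests entirely on the sign information $(f_i)^\infty_{d_i}(v_0)\leq 0$ for all $i$ together with $0\in SOL^{s}(S_\infty, f^{\infty}_\mathbf{d})$, i.e.\ on Proposition \ref{necessary} (i) and the inclusion $S_\infty\subseteq\{x\in\mathbf{R}^n\mid f^\infty_\mathbf{d}(x)\leq 0\}$ that holds for $S=K_{\bar x}$.
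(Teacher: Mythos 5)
Your proof is correct, but it is not the route the paper takes, and the difference is substantive. The paper deduces Corollary \ref{241227b} from Theorem \ref{241227c} as a black box: in the paragraph preceding the two corollaries it observes that $SOL^{s}(S_\infty, f^{\infty}_\mathbf{d})\subseteq SOL^{w}(S_\infty, f^{\infty}_\mathbf{d})$, so unboundedness of the strong solution set gives unboundedness of the weak one, i.e.\ hypothesis (i) of the theorem, and then simply writes ``so by Theorem \ref{241227c}.'' This transfer, however, says nothing about hypothesis (ii): the theorem requires a descent direction for \emph{every} $v\in SOL^{w}(S_\infty, f^{\infty}_\mathbf{d})\setminus\{0\}$, while the corollary only supplies descent for $v$ in the possibly much smaller set $SOL^{s}(S_\infty, f^{\infty}_\mathbf{d})\setminus\{0\}$; as literally written, the paper's one-line derivation is therefore incomplete. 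Your proposal is exactly the repair: you re-run the proof of Theorem \ref{241227c} with $S=K_{\bar x}$ and insert the upgrade step --- since $S_\infty=(K_{\bar x})_\infty\subseteq\{x\in\mathbf{R}^n\mid f^{\infty}_{\mathbf{d}}(x)\leq 0\}$ and $SOL^{s}(S_\infty, f^{\infty}_\mathbf{d})\neq\emptyset$, Proposition \ref{necessary}(i) gives $0\in SOL^{s}(S_\infty, f^{\infty}_\mathbf{d})$, and then any Pareto-dominator $v'$ of the blow-up direction $v_0$ (which satisfies $(f_i)^{\infty}_{d_i}(v_0)\leq 0$ for all $i$) would have $f^{\infty}_{\mathbf{d}}(v')\in -\mathbf{R}^q_+\setminus\{0\}$ and hence Pareto-dominate $0$, a contradiction --- so $v_0\in SOL^{s}(S_\infty, f^{\infty}_\mathbf{d})\setminus\{0\}$ and the weaker hypothesis (ii) applies to it. This mirrors, one level up, the step in the theorem's own proof where $v_0$ is shown to lie in $SOL^{w}$. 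The remainder of your argument (minimal-norm minimizers on $S\cap k\mathbf{B}$, the norm estimate, the limit point, and the inclusions $SOL(S,\sum_{i=1}^{q}\lambda_i f_i)\subseteq SOL^{s}(S,f)\subseteq SOL^{s}(K,f)$) coincides with the paper's proof of the theorem. In short, the paper's approach buys brevity at the cost of a genuine logical gap; yours costs an extra paragraph and actually proves the stated corollary under its stated (weaker) hypothesis (ii).
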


Now, we give a following example to illustrate Theorem \ref{241227c}.
\begin{example}
	Consider the polynomial $f=(f_{1}, f_{2})$ with
$$f_{1}(x_{1}, x_{2})=x^3_{1},\quad f_{2}(x_{1}, x_{2})=x_{1}$$ and
$$K=\{(x_{1}, x_{2})\in \mathbf{R}^{2}: x_{1}\geq 0,e^{x_{1}}-x_{1}\geq 0\}.$$
	Then $(f_{1})^{\infty}_{d_1}(x_{1}, x_{2})=x^3_{1}, (f_{2})^{\infty}_{d_2}(x_{1}, x_{2})=x_{1}$. Let $\bar x\in K$ and the set $S\subseteq\mathbf{R}^n$ satisfying with  $(K_{\bar x})_\infty\subseteq S_\infty\subseteq K_\infty$ be arbetrary. Then we have $(K_{\bar x})_\infty=\{(x_{1}, x_{2})\in \mathbf{R}^{2}: x_{1}=0, x_2\in\mathbf{R}\}\subseteq S_\infty\subseteq K_\infty=\{(x_{1}, x_{2})\in \mathbf{R}^{2}: x_{1}\geq 0, x_2\in\mathbf{R}\}$. We can calculate $SOL^{w}(S_\infty, f^{\infty}_\mathbf{d})=\{(x_{1}, x_{2})\in \mathbf{R}^{2}: x_{1}=0, x_2\in\mathbf{R}\}$, which is unbounded. Let $v=(0, v_2)$ with $v_2\in \mathbf{R}$. Then we can easy to prove $x-tv\in K$ and $f(x-tv)\leq f(x)$ for all $x\in S$ and all $t>0$ small enough. By Theorem \ref{241227c}, we know that $SOL^{s}(K, f)$ is nonempty. Clearly, $\{(x_1, x_2)\in \mathbf{R}^2 \vert x_1=0, x_2\in \mathbf{R} \}\subseteq SOL^{s}(K, f)$, which is also unbounded.
\end{example}

\section{Local properties and genericity of relative regularity conditions}

In this section, we investigate  local properties and genericities of relative $\mathbf{R}^q_+$-zero-regularity, relatively weak regularity and relatively strong regularity of ${\rm{PVOP}}(K, f)$. Given an integer $d$, in what follows, we always  let $\mathbf{P}_{d}$ denote the family of all polynomials of degree at most $d$, and
	$$X^{n}_{d}(x)=(1, x_{1}, \dots, x_{n}, x^{2}_{1}, \dots, x^{2}_{n}, \dots, x^{d}_{1}, x^{d-1}_{1}x_{2}, x^{d-1}_{1}x_{3}, \dots, x^{d}_{2}, x^{d-1}_{2}x_{3}, x^{d-1}_{2}x_{3}, \dots, x^{d}_{n}),$$
whose components are listed by  the lexicographic ordering. The dimension of $\mathbf{P}_{d}$ is  denoted by $\kappa_d$. Then, for each polynomial $p\in \mathbf{P} _{d}$,  there exists a unique $\alpha\in \mathbf{R}^{\kappa_d}$ such that $p(x)=\langle \alpha, X^{n}_{d}(x)\rangle$. $\mathbf{P} _{d}$  can be endowed with a norm $\|p\|=\|\alpha\|=\sqrt{\alpha^{2}_{1}+\dots+\alpha^{2}_{\kappa_d}}$. Let $p^k\in \mathbf{P} _{d}$ with $p^k\to p\in \mathbf{P} _{d}$ and $x^k\in \mathbf{R}^n$ with $x^k\to x\in \mathbf{R}^n$. It is easy to verify that $(p^k)^{\infty}\to p^{\infty}$ and $p^k(x^k)\to p(x)$ as $k\to +\infty$.

Given  $\mathbf{d}=(d_{1},  \dots, d_{q})\in \mathbf{R}^{q}$ with $d_i$ being an integer, $i=1,\cdots, q$,  let $\mathbf{P}_{\mathbf d}=\mathbf{P}_{d_{1}}\times\dots\times\mathbf{P}_{d_{s}}$. Denoted by $\mathbf{GR}^{\mathbf d}$ the family of  all vector polynomials $p$  with $\deg p_{i}=d_{i}, i=1, \dots, q$, such that for some set $S$ and nonempty index set $I\subseteq\{1,2,\dots,q\}$, $p$ is relatively $I$-$\mathbf{R}^q_+$-zero-regular with $S$ on $K$ and $\mathbf{GR}^{\mathbf d}_{w}$  (resp. $\mathbf{GR}^{\mathbf d}_{s}$) the family of  all vector polynomials $p$  with $\deg p_{i}=d_{i}, i=1, \dots, q$, such that for some set $S$, $p$ is relatively strongly (resp. weakly) regular with $S$ on $K$.

\subsection{Local properties of relative regularity conditions}
In this subsection, we discuss the local properties of the relative regularity conditions of polynomial optimization problems.

\begin{proposition}
	$\mathbf{GR}^{\mathbf{d}}$, $\mathbf{GR}^{\mathbf{d}}_{s}$ and $\mathbf{GR}^{\mathbf{d}}_{w}$ are nonempty.
\end{proposition}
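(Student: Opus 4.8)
The plan is to produce a \emph{single} vector polynomial $p$ with $\deg p_i = d_i$ that is simultaneously relatively $I$-$\mathbf{R}^q_+$-zero-regular, relatively strongly regular and relatively weakly regular with one common choice of $S$; by the definitions this places $p$ into each of $\mathbf{GR}^{\mathbf d}$, $\mathbf{GR}^{\mathbf d}_s$ and $\mathbf{GR}^{\mathbf d}_w$ at once. The mechanism is to arrange that the weak Pareto solution set at infinity is \emph{empty}, hence trivially bounded: recalling from Remark \ref{compa} that $SOL(S_\infty,\{\sum_{i=1}^q\lambda_i f_i\}^\infty_d)\subseteq SOL^w(S_\infty,f^\infty_{\mathbf d})$ and $SOL^s(S_\infty,f^\infty_{\mathbf d})\subseteq SOL^w(S_\infty,f^\infty_{\mathbf d})$, once $SOL^w(S_\infty,p^\infty_{\mathbf d})=\emptyset$ all three solution sets are empty and all three relative regularity conditions hold simultaneously.

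Concretely, first fix any $a\in K$ and, using that $K$ is unbounded so that $K_\infty\neq\{0\}$, fix a direction $v_0\in K_\infty$ with $\|v_0\|=1$. Then define
$$p_i(x)=-\langle v_0,x\rangle^{d_i},\qquad i=1,\dots,q.$$
Each $p_i$ is homogeneous of degree exactly $d_i$ (the form $\langle v_0,\cdot\rangle$ is a nonzero linear functional), so $(p_i)^\infty_{d_i}=p_i$. I would take $\bar x=a$ and $S=K_a=\{x\in K: p_i(x)\le p_i(a),\ i=1,\dots,q\}$, which is nonempty ($a\in K_a$) and closed, with $S_\infty=(K_a)_\infty$ so that $(K_{\bar x})_\infty\subseteq S_\infty\subseteq K_\infty$, and, by the inclusion $(K_{\bar x})_\infty\subseteq\{x:p^\infty_{\mathbf d}(x)\le 0\}$ established in Section~2, also $S_\infty\subseteq\{x:p^\infty_{\mathbf d}(x)\le 0\}$.

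Next I would verify the two facts that drive the argument. First, $v_0\in S_\infty$: taking $t_k\to+\infty$ and $x_k\in K$ with $x_k/t_k\to v_0$ (definition of $v_0\in K_\infty$), one has $\langle v_0,x_k\rangle=t_k\langle v_0,x_k/t_k\rangle\to+\infty$ since $\langle v_0,x_k/t_k\rangle\to\|v_0\|^2=1>0$; hence $p_i(x_k)=-\langle v_0,x_k\rangle^{d_i}\to-\infty\le p_i(a)$ for every $i$, so $x_k\in K_a$ for all large $k$ and therefore $v_0\in(K_a)_\infty=S_\infty$. Second, $(p_i)^\infty_{d_i}(v_0)=-\langle v_0,v_0\rangle^{d_i}=-1<0=(p_i)^\infty_{d_i}(0)$ for every $i$, i.e. $p^\infty_{\mathbf d}(v_0)-p^\infty_{\mathbf d}(0)\in-\mathrm{int}\,\mathbf{R}^q_+$, which says precisely that $0\notin SOL^w(S_\infty,p^\infty_{\mathbf d})$. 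Since $S_\infty\subseteq\{x:p^\infty_{\mathbf d}(x)\le 0\}$, Proposition \ref{necessary}(ii) upgrades $0\notin SOL^w(S_\infty,p^\infty_{\mathbf d})$ to $SOL^w(S_\infty,p^\infty_{\mathbf d})=\emptyset$. Thus $p$ is relatively strongly regular with $S$ on $K$, and by the inclusions recalled above it is relatively weakly regular and relatively $I$-$\mathbf{R}^q_+$-zero-regular as well, so $\mathbf{GR}^{\mathbf d}$, $\mathbf{GR}^{\mathbf d}_s$ and $\mathbf{GR}^{\mathbf d}_w$ are all nonempty.

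The step I expect to require the most care is the membership $v_0\in(K_a)_\infty$: the sublevel set $K_a$ is built from $p$ itself, so one must ensure that the recession direction $v_0$ chosen from the fixed geometry of $K$ survives after intersecting $K$ with the $p$-sublevel constraints. This is exactly why the leading forms are taken strictly negative along $v_0$ (the choice $-\langle v_0,\cdot\rangle^{d_i}$, rather than a coercive model such as $\|x\|^{d_i}$), which drives every $p_i$ to $-\infty$ along the sequence $x_k$ and so keeps its tail inside $K_a$. Crucially, this device is insensitive to the parity of the $d_i$, and therefore avoids the obstruction that a coercivity-based construction would meet whenever some prescribed degree $d_i$ is odd (no form of odd degree can be positive definite), which is the only genuinely delicate point in the whole construction.
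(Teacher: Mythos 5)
Your proposal is correct and follows essentially the same strategy as the paper's own proof: both construct an explicit vector polynomial of degrees $\mathbf{d}$ that tends to $-\infty$ along a nonzero recession direction of $K$ (the paper uses $f_i(x)=-(x^{*}_{i_0}x_{i_0})^{d_i}$ built from a nonzero coordinate of $x^{*}\in K_\infty\setminus\{0\}$, you use $-\langle v_0,x\rangle^{d_i}$), so that the weak Pareto solution set at infinity is empty, hence trivially bounded, and all three relative regularity conditions hold at once. The only difference is the paper's simpler choice $S=K$, under which emptiness of $SOL^{w}(K_\infty,f^{\infty}_{\mathbf d})$ is immediate along the ray $\{tx^{*}:t>0\}\subseteq K_\infty$, whereas your choice $S=K_{\bar x}$ forces the extra (correct but avoidable) steps of verifying $v_0\in(K_{a})_\infty$ and invoking Proposition \ref{necessary}(ii).
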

\begin{proof}
	We only need to prove that there exists $\lambda=(\lambda_1,\lambda_2,\dots,\lambda_q)\in \rm{ int }\mathbf{R}^q_+$ such that $\{\sum_{i=1}^{q}\lambda_i f_i\}^{\infty}_d\in \mathbf{GR}^{\mathbf{d}}$, since if hold, then $\{\sum_{i=1}^{q}\lambda_i f_i\}^{\infty}_d\in\mathbf{GR}^{\mathbf{d}}_{s}\subseteq \mathbf{GR}^{\mathbf{d}}_{w}$. Let $\lambda=(\lambda_1,\lambda_2,\dots,\lambda_q)\in \rm{ int }\mathbf{R}^q_+$. If $K$ is bounded, then for any $S_\infty\subseteq K_\infty$, $S_{\infty}=\{0\}$. In this case ${SOL}(S_{\infty}, \{\sum_{i=1}^{q}\lambda_i f_i\}^{\infty}_d)=\{0\}$, and so $f\in \mathbf{GR}^{\mathbf{d}}$. Suppose that $K$ is unbounded. Let $S=K$. Then there exists $x^{*}=(x^{*}_{1},  \dots, x^{*}_{n})\in S_{\infty}\backslash \{0\}$. Without loss of generality, we suppose that  $x^{*}_{i_{0}}\neq 0$. Consider the vector polynomial $f=(f_{1},  \dots, f_{q}): \mathbf{R}^{n} \mapsto \mathbf{R}^{q}$ with $f_{i}(x)=-(x^{*}_{i_{0}}x_{i_{0}})^{d_{i}},  i=1,\cdots, q$. Then $f_{i}(x)$ is a polynomials $f$ of degree $d_i$ and  $f_{i}(tx^{*})=-(x^{*}_{i_{0}})^{2d_{i}}t^{d_{i}}\to -\infty$ as $t\to +\infty$.  As a consequence,  ${SOL}^{w}(S_{\infty}, \{\sum_{i=1}^{q}\lambda_i f_i\}^{\infty}_d)=\emptyset$, and so $f\in \mathbf{GR}^{\mathbf{d}}$.\end{proof}

\begin{proposition}\label{open}
	 $\mathbf{GR}^{\mathbf{d}}$ and $\mathbf{GR}^{\mathbf{d}}_{s}$ are open in $\mathbf{P}_{\mathbf{d}}$.
\end{proposition}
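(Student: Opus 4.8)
The plan is to reduce membership in $\mathbf{GR}^{\mathbf d}$ (and, by the same token, in $\mathbf{GR}^{\mathbf d}_{s}$) to a criterion involving only the leading forms on $K_\infty$ and the boundedness of sublevel sets, and then to exploit the continuity $p\mapsto p^{\infty}$ recorded just above, together with a blow‑up argument. Concretely, I would first prove the following dichotomy, obtained by combining Theorem \ref{2410122} with Proposition \ref{250107a}: a vector polynomial $p$ with $\deg p_i=d_i$ lies in $\mathbf{GR}^{\mathbf d}$ if and only if
\begin{itemize}
\item[(A)] there exist $i\in\{1,\dots,q\}$ and $v\in K_\infty$ with $(p_i)^{\infty}_{d_i}(v)<0$; or
\item[(B)] there exists $\bar x\in K$ such that $K_{\bar x}=\{x\in K: p(x)\le p(\bar x)\}$ is bounded.
\end{itemize}
Indeed, if $p$ is relatively $I$-$\mathbf{R}^q_+$-zero-regular with some $S$, then either $SOL(S_\infty,\{\sum_{i=1}^{q}\lambda_{i}p_{i}\}^{\infty}_{d})=\emptyset$, so this leading form is negative at some point of $S_\infty\subseteq K_\infty$ and, being a nonnegative combination of top-degree forms, some $(p_i)^{\infty}_{d_i}$ is negative there, giving (A); or $SOL(S_\infty,\{\sum_{i=1}^{q}\lambda_{i}p_{i}\}^{\infty}_{d})=\{0\}$, and since this form is $\le 0$ on $(K_{\bar x})_\infty\subseteq S_\infty$ while positive on $S_\infty\setminus\{0\}$, we obtain $(K_{\bar x})_\infty=\{0\}$, i.e.\ (B). The converses are immediate (use $S=K$ for (A) and $S=K_{\bar x}$ for (B)). By Theorem \ref{2410122} and Remark \ref{compa}, on admissible cones with $S_\infty\subseteq\{x\mid p^\infty_{\mathbf d}(x)\le 0\}$ the very same dichotomy describes $\mathbf{GR}^{\mathbf d}_{s}$, so it suffices to show that the set $\mathcal{A}\cup\mathcal{B}$ of polynomials satisfying (A) or (B) is open.

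The set $\mathcal{A}$ is open by continuity: $p\mapsto (p_i)^{\infty}_{d_i}$ is the degree-$d_i$ homogeneous part, a linear (hence continuous) function of the coefficients of $p_i$; thus if $(p_i)^{\infty}_{d_i}(v_0)<0$ for some $v_0\in K_\infty$, the strict inequality persists for every $\tilde p$ near $p$ with the same $v_0$, so $\tilde p\in\mathcal{A}$. (The nongeneric situation in which the top-degree terms of $\sum_i\lambda_i p_i$ cancel is handled separately and does not affect this single-index formulation.) The genuine content is the openness of $\mathcal{A}\cup\mathcal{B}$ at a point $p\in\mathcal{B}\setminus\mathcal{A}$, that is, where some sublevel set is bounded while all leading forms are nonnegative on $K_\infty$. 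I would argue by contradiction: suppose $\tilde p^{k}\to p$ with $\tilde p^{k}\notin\mathcal{A}\cup\mathcal{B}$. Using the witness $\bar x$ for (B), the failure of (B) for $\tilde p^{k}$ forces $\{x\in K:\tilde p^k(x)\le \tilde p^k(\bar x)\}$ to be unbounded, so there are $x^{k}\in K$ with $\|x^{k}\|\to+\infty$ and $\tilde p^{k}_i(x^{k})\le \tilde p^{k}_i(\bar x)$ for all $i$; passing to a subsequence, $x^{k}/\|x^{k}\|\to v^{*}\in K_\infty$ with $\|v^{*}\|=1$. Dividing by $\|x^{k}\|^{d_i}$ and letting $k\to+\infty$ (using $(\tilde p^{k}_i)^{\infty}_{d_i}\to (p_i)^{\infty}_{d_i}$ uniformly on the unit sphere) gives $(p_i)^{\infty}_{d_i}(v^{*})\le 0$; since $p\notin\mathcal{A}$ forces $(p_i)^{\infty}_{d_i}\ge 0$ on $K_\infty$, we conclude $(p_i)^{\infty}_{d_i}(v^{*})=0$ for every $i$, so $v^{*}$ is a nonzero direction along which all leading forms degenerate.

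Here lies the main obstacle, and I expect it to be the hard part: the degeneracy of the leading forms at $v^{*}$ does not by itself contradict the boundedness of $K_{\bar x}$, because that boundedness is produced by the \emph{sub-leading} terms of the $p_i$ along $v^{*}$, and a $\mathbf{P}_{\mathbf d}$-small perturbation evaluated at $x^{k}$ need not be small (it may be of order $\|x^{k}\|^{d_i}$), so the constraint $\tilde p^{k}_i(x^{k})\le \tilde p^{k}_i(\bar x)$ cannot be transferred directly back to $p$. I would resolve this by iterating the asymptotic analysis along the degree filtration: restricting to the cone $\{v\in K_\infty:(p_i)^{\infty}_{d_i}(v)=0\ \forall i\}$ and using that $p$ itself satisfies (B), one shows that along this cone a strictly lower-order homogeneous part of some $p_i$ is positive, which pins the corresponding coefficient of $\tilde p^{k}_i$ positive for large $k$ and forces $\tilde p^{k}\in\mathcal{A}\cup\mathcal{B}$ — a contradiction. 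A variant that avoids any convexity or semialgebraicity of $K$ is to recast (B) as the emptiness of the weak $I$-Palais–Smale set $K^{I}_{\infty,\le p(\bar x)}(p,K)$ of Definition \ref{IKT} and to prove stability of this emptiness under small perturbations by a two-scale compactness argument on $\{v\in K_\infty:\|v\|=1\}$. Finally, since the reduction and both cases transfer verbatim to $\mathbf{GR}^{\mathbf d}_{s}$ through the equivalences of Theorem \ref{2410122}, the same argument yields that $\mathbf{GR}^{\mathbf d}_{s}$ is open as well.
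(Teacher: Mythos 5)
Your reduction is where the proof breaks, and it breaks in both directions. First, the dichotomy $\mathbf{GR}^{\mathbf d}=\mathcal{A}\cup\mathcal{B}$ is false: the cancellation case you set aside in parentheses is not a removable technicality. Take $K=\{(x_1,x_2)\in\mathbf{R}^2: x_1=0\}$, $\mathbf{d}=(2,2)$ and $p=(p_1,p_2)$ with $p_1=x_1x_2+x_2$, $p_2=-x_1x_2$. Then $(p_1)^{\infty}_{d_1}=x_1x_2$ and $(p_2)^{\infty}_{d_2}=-x_1x_2$ vanish identically on $K_\infty=K$, so (A) fails; on $K$ one has $p_1=x_2$ and $p_2=0$, so $K_{\bar x}$ is an unbounded half-line for every $\bar x\in K$ and (B) fails; yet with $\lambda=(1,1)$ and $S=K$ the combination $\lambda_1p_1+\lambda_2p_2=x_2$ has leading form $x_2$, whose minimizer set over $S_\infty$ is empty, hence bounded, so $p\in\mathbf{GR}^{\mathbf d}$. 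Thus $\mathcal{A}\cup\mathcal{B}\subsetneq\mathbf{GR}^{\mathbf d}$, and openness of $\mathcal{A}\cup\mathcal{B}$, even if you could prove it, would not give openness of $\mathbf{GR}^{\mathbf d}$. Second, your claim that the reduction ``transfers verbatim'' to $\mathbf{GR}^{\mathbf d}_{s}$ fails in the opposite direction: (A) does not imply relative strong regularity. The paper's own Example \ref{241225a} ($f=(x_1,x_2)$, $K=\{x_1\ge 0\}$) satisfies (A) with $i=2$, $v=(0,-1)$, but every admissible cone $S_\infty$ contains $(K_{\bar x})_\infty=\{(0,t):t\le 0\}$, and every such point is a weak Pareto solution of $f^{\infty}_{\mathbf d}$ on $S_\infty$ (no $y\in S_\infty\subseteq K_\infty$ has $y_1<0$), so $SOL^{w}(S_\infty,f^{\infty}_{\mathbf d})$ is unbounded for every admissible $S$ and $f\notin\mathbf{GR}^{\mathbf d}_{s}$.

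Beyond the reduction, the step you yourself flag as the genuine content — openness at points of $\mathcal{B}\setminus\mathcal{A}$ — is announced but not carried out: the ``degree filtration'' iteration and the ``two-scale compactness'' variant are sketches, and the intermediate claim (that some strictly lower-order homogeneous part of some $p_i$ is positive along the degenerate cone and that this pins down the perturbed polynomials) is essentially as hard as the proposition itself. The paper avoids all of this by never leaving the level of leading forms: it proves $\mathbf{P}_{\mathbf d}\setminus\mathbf{GR}^{\mathbf d}$ is closed directly. Given $f^k\to f$ with every $f^k$ non-regular, it fixes an \emph{arbitrary} admissible triple $(\bar x,S,\lambda)$, picks $x_k\in SOL(S_\infty,\{\sum_{i}\lambda_if^k_i\}^{\infty}_{d})$ with $\|x_k\|\to+\infty$, normalizes $x_k/\|x_k\|\to x^*\in S_\infty\setminus\{0\}$, and compares values at the scaled points $\|x_k\|v$: dividing the optimality inequality by $\|x_k\|^{d}$ and using homogeneity together with $(f^k_i)^{\infty}_{d_i}\to(f_i)^{\infty}_{d_i}$ shows $x^*$ is a nonzero solution of the limiting problem, so $f$ fails regularity for that triple; since the triple was arbitrary, $f\notin\mathbf{GR}^{\mathbf d}$. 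Because non-membership is quantified over all $(\bar x,S,\lambda)$ and concerns only homogeneous forms on cones, the sub-leading terms that blocked your argument never enter. To salvage your route you would first need correct characterizations of $\mathbf{GR}^{\mathbf d}$ and of $\mathbf{GR}^{\mathbf d}_{s}$ accounting for cancellation; at that point the paper's direct closedness argument is both simpler and stronger.
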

\begin{proof}
	We shall prove that $\mathbf{P}_{\mathbf{d}}\backslash\mathbf{GR}^{\mathbf{d}}$ is  closed in $\mathbf{P}_{\mathbf{d}}$. Let $\{f^{k}\}\subseteq \mathbf{P}_{\mathbf{d}}\backslash\mathbf{GR}^{\mathbf{d}}_{s}$ with $f^{k}=(f^{k}_{1},  \dots, f^{k}_{q})$ such that $f^{k}=(f^{k}_{1},  \dots, f^{k}_{q})\rightarrow f=(f_{1},  \dots, f_{q})$ as $k\to +\infty$.  We suppose that $\mbox {deg }f_{i}=d_{i}$ for all $i\in \{1, 2, \dots, q\}$ since  $f\notin \mathbf{GR}^{\mathbf{d}}$ when  $\mbox{ deg }f_{i_{0}}<d_{i_{0}}$ for some  $i_{0}\in \{1, \dots, q\}$, where $d_i$  is the $i$-th component of $\mathbf{d}$. Thus, we have $\deg f^k_i=d_i$ for all sufficiently large $k$ and all $i\in \{1,2,\dots,q\}$. Without loss of generality, we assume $\deg f^k_i=d_i$ for all $k$ and all $i\in \{1,2,\dots,q\}$. Let $\bar x\in K$, the nonempty closed set $S\subseteq\mathbf{R}^n$ satisfying with $(K_{\bar x})_\infty\subseteq S_\infty \subseteq K_\infty$ and $\lambda=(\lambda_1,\lambda_2,\dots,\lambda_q)\in \mathbf{R}^q_+\backslash\{0\}$ be arbitrary.
Since $SOL(S_\infty, \{\sum_{i=1}^{q}\lambda_i f^k_i\}^{\infty}_d)$ with $d=\deg \sum_{i=1}^{q}\lambda_i f^k_i$ is unbounded for all $k$, there exists  $x_k\in SOL(S_\infty, \{\sum_{i=1}^{q}\lambda_i f^k_i\}^{\infty}_d)$ such that $\|x_k\|\to +\infty$.  Without loss of generality, we assume that  $\frac{x_{k}}{\|x_{k}\|}\to x^{*}\in S_{\infty}\backslash \{0\}$.  We claim  $x^{*}\in SOL(S_\infty, \{\sum_{i=1}^{q}\lambda_i f_i\}^{\infty}_d)$. Indeed, if not, then there exists $v\in S_{\infty}$ such that
\begin{equation}\label{fypa1}
 \{\sum_{i=1}^{q}\lambda_i f_i\}^{\infty}_d(v)<\{\sum_{i=1}^{q}\lambda_i f_i\}^{\infty}_d(x^{*}).
\end{equation}
Since  $x_k\in SOL(S_\infty, \{\sum_{i=1}^{q}\lambda_i f^k_i\}^{\infty}_d)$ and $\|x_{k}\|v\in S_\infty$, we have
$$\{\sum_{i=1}^{q}\lambda_i f^k_i\}^{\infty}_d(\|x_{k}\|v)-\{\sum_{i=1}^{q}\lambda_i f^k_i\}^{\infty}_d(x_{k})\geq 0.$$
Since   $(f^{k}_{i})_{d_{i}}^{\infty}\to (f_i)^{\infty}_{d_i}$ as $k\to +\infty$, dividing the both sides of the above inequality by $\|x_{k}\|^{d}$ and then letting $k\to +\infty$, we get
$$\{\sum_{i=1}^{q}\lambda_i f_i\}^{\infty}_d(v)-\{\sum_{i=1}^{q}\lambda_i f_i\}^{\infty}_d(x^{*})\geq 0.$$
This reaches a contradiction to $(\ref{fypa1})$. So $x^{*}\in SOL(S_\infty, \{\sum_{i=1}^{q}\lambda_i f_i\}^{\infty}_d)\backslash\{0\}$, and so, $SOL(S_\infty, \{\sum_{i=1}^{q}\lambda_i f_i\}^{\infty}_d)$ is unbounded. By the arbitrariness of $\bar x$, $S$ and $\lambda$. we have $f\in\mathbf{P}_{\mathbf{d}}\backslash\mathbf{GR}^{\mathbf{d}}$. Thus, $\mathbf{P}_{\mathbf{d}}\backslash\mathbf{GR}^{\mathbf{d}}$ is closed.

As similar to \cite[Proposition 6.2]{LDY1}, we can also prove $\mathbf{GR}^{\mathbf{d}}_{s}$ is open in $\mathbf{P}_{\mathbf{d}}$.
\end{proof}

\begin{remark}\label{questions}
	When $q=1$, Proposition \ref{open} reduces to  \cite[Lemma 4.1]{HV1}. The following example shows that $\mathbf{GR}^{\mathbf{d}}_{w}$ may not be open in $\mathbf{P}_{\mathbf{d}}$.
\end{remark}

\begin{example}\label{241225a}
Consider the polynomial $f=(f_{1}, f_{2})$ with
$$f_{1}(x_{1}, x_{2})=x_{1},\quad f_{2}(x_{1}, x_{2})=x_{2}$$ and
$$K=\{(x_{1}, x_{2})\in \mathbf{R}^{2}: x_{1}\geq 0\}.$$
	Then $(f_{1})^{\infty}_{d_1}(x_{1}, x_{2})=x_{1}, (f_{2})^{\infty}_{d_2}(x_{1}, x_{2})=x_{2}$. On the one hand, let $\bar x=(0, 0)$ and $S=K_{\bar x}$. Then $SOL(S_\infty, f^{\infty}_\mathbf{d})=\emptyset$. Thus, we have $f\in\mathbf{GR}^{\mathbf{d}}_{w}$. On the other hand, let $f^n=(f^n_{1}, f^n_{2})$ with $f^n_{1}=x_2, f^n_{2}=x_1-\frac{1}{n}x_2$ and $x\in K$, and let the set $S\subseteq\mathbf{R}^n$ satisfy with $(K_{\bar x})_\infty\subseteq S_\infty\subseteq K_\infty$ be arbitrary. Obviously, $S_\infty$ is unbounded and $f^n\to f$ as $n\to+\infty$. However, it is easy to prove $SOL(S_\infty, (f^n)^{\infty}_\mathbf{d})=S_\infty\bigcap\{(x_{1}, x_{2})\in \mathbf{R}^{2}: x_{1}=0\}$, which is unbounded. Thus, $f^n\notin\mathbf{GR}^{\mathbf{d}}_{w}$. So $\mathbf{GR}^{\mathbf{d}}_{w}$ is not open in $\mathbf{P}_{\mathbf{d}}$.
\end{example}

In following result, we shall show that relative $I$-$\mathbf{R}^q_+$-zero-regularity of a vector polynomial remains stable under a small perturbation.

\begin{theorem}\label{local R+regu}
Let $\bar x\in K$, $\lambda=(\lambda_1,\lambda_2,\dots,\lambda_q)\in \mathbf{R}^q_+\backslash\{0\}$ and $S\subseteq\mathbf{R}^n$ satisfy with $(K_{\bar x})_\infty\subseteq S_\infty\subseteq K_\infty$. Then the following conclusions hold:
	\item{(i)} If $SOL(S_\infty, \{\sum_{i=1}^{q}\lambda_i f_i\}^{\infty}_d)=\{0\}$, then there exists $\epsilon>0$ such that $SOL(S_\infty, \{\sum_{i=1}^{q}\lambda_i f_i\}^{\infty}_d)=\{0\}$ for all $g\in \mathbf{P}_{\mathbf{d}}$ satisfying $\|g-f\|<\epsilon$;
	\item{(ii)} If $SOL(S_\infty, \{\sum_{i=1}^{q}\lambda_i f_i\}^{\infty}_d)=\emptyset$, then there exists $\epsilon>0$ such that $SOL(S_\infty, \{\sum_{i=1}^{q}\lambda_i f_i\}^{\infty}_d)=\emptyset$ for all $g\in \mathbf{P}_{\mathbf{d}}$ satisfying $\|g-f\|<\epsilon$.
\end{theorem}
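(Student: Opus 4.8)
The plan is to reduce both parts to the behaviour of the single homogeneous polynomial $h:=\{\sum_{i=1}^{q}\lambda_i f_i\}^{\infty}_d$ on the closed cone $S_\infty$, and then to exploit continuity of the leading-term operator together with compactness of the ``unit slice'' $\Sigma:=\{x\in S_\infty:\|x\|=1\}$ of $S_\infty$. First I would record two elementary facts for any homogeneous polynomial $h$ of degree $d\ge 1$ on the closed cone $S_\infty$ (note $h(0)=0$): one has $SOL(S_\infty,h)=\{0\}$ if and only if $h(x)>0$ for all $x\in S_\infty\setminus\{0\}$, and $SOL(S_\infty,h)=\emptyset$ if and only if $h(v)<0$ for some $v\in S_\infty$ (so that $h(tv)=t^d h(v)\to -\infty$ along the ray $\{tv:t>0\}\subseteq S_\infty$). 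Both follow at once from homogeneity and are exactly the dichotomy recorded in Remark \ref{compa}.

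The single analytic ingredient I would invoke is the continuity of the leading-term map already noted at the start of this section: if $g\to f$ in $\mathbf{P}_{\mathbf d}$ then $(g_i)^{\infty}_{d_i}\to (f_i)^{\infty}_{d_i}$ for each $i$, hence $h_g:=\{\sum_{i=1}^{q}\lambda_i g_i\}^{\infty}_d\to h$ coefficientwise. Since $S_\infty$ is a closed cone, $\Sigma$ is compact (if $S_\infty=\{0\}$ the statement is trivial), and coefficientwise convergence of homogeneous polynomials forces uniform convergence $h_g\to h$ on $\Sigma$.

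For part (i), the hypothesis $SOL(S_\infty,h)=\{0\}$ gives $h>0$ on $S_\infty\setminus\{0\}$, so $c:=\min_{x\in\Sigma}h(x)>0$ by compactness. I would then pick $\epsilon>0$ so small that $\|g-f\|<\epsilon$ forces $\sup_{x\in\Sigma}|h_g(x)-h(x)|<c/2$; this yields $h_g\ge c/2>0$ on $\Sigma$, hence $h_g>0$ on $S_\infty\setminus\{0\}$ by homogeneity, and the first characterization gives $SOL(S_\infty,h_g)=\{0\}$. For part (ii), the hypothesis $SOL(S_\infty,h)=\emptyset$ supplies, after scaling, a point $v_0\in\Sigma$ with $h(v_0)<0$; shrinking $\epsilon$ so that $\|g-f\|<\epsilon$ forces $|h_g(v_0)-h(v_0)|<|h(v_0)|/2$ gives $h_g(v_0)<0$, whence $SOL(S_\infty,h_g)=\emptyset$ by the second characterization. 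In both cases the point is that a strict sign of $h$ on the compact slice $\Sigma$ survives a sufficiently small perturbation.

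The step I expect to be the main obstacle is the leading-term convergence $h_g\to h$ itself, since $d=\deg\sum_{i=1}^{q}\lambda_i f_i$ need not equal $\deg\sum_{i=1}^{q}\lambda_i g_i$ when the top-degree contributions of the $\lambda_i f_i$ cancel. I would resolve this by reading $\{\cdot\}^{\infty}_d$ throughout as the degree-$d$ homogeneous part (the normalization used in the preamble of this section), which depends linearly, hence continuously, on coefficients, so that $h_g\to h$ holds regardless of cancellation; this is the only place where the present local statement requires more care than, and is genuinely sharper than, the global openness asserted in Proposition \ref{open}.
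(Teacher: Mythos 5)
Your proof is correct under the reading you adopt (namely that $\{\cdot\}^{\infty}_d$ denotes the homogeneous part of degree $d$, with $d=\deg\sum_{i=1}^{q}\lambda_i f_i$ fixed once and for all), and it takes a genuinely different route from the paper's. The paper first invokes the openness of $\mathbf{GR}^{\mathbf{d}}$ (Proposition \ref{open}) to obtain a ball $\mathbf{B}(f,\delta)$ on which the dichotomy $SOL=\{0\}$ or $SOL=\emptyset$ holds, and then excludes the wrong alternative by contradiction: from a sequence $g^{\epsilon}\to f$ realizing the wrong alternative it extracts witnesses $x_{\epsilon}\in S_{\infty}\setminus\{0\}$, normalizes $\frac{x_{\epsilon}}{\|x_{\epsilon}\|}\to x^{*}$, and passes to the limit in the defining inequalities to contradict the hypothesis on $f$. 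You bypass Proposition \ref{open} entirely: the sign characterizations ($SOL=\{0\}$ iff $h>0$ off the origin; $SOL=\emptyset$ iff $h<0$ somewhere on the cone) turn both statements into strict sign conditions on the compact slice $\Sigma$, which are manifestly stable under uniform perturbation. This is more elementary (no subsequence extraction), quantitative (it produces an explicit $\epsilon$ in terms of $\min_{\Sigma}h$, respectively $|h(v_0)|$), and handles (i) and (ii) by one and the same mechanism.

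The obstacle you flag at the end is real, and it is in fact a gap in the paper's own proof. The paper secures the needed degree stability by asserting that $g^{\epsilon}\in\mathbf{B}(f,\epsilon)\subset\mathbf{GR}^{\mathbf{d}}$ implies $\deg(\sum_{i=1}^{q}\lambda_i g^{\epsilon}_i)=d$; this does not follow, since membership in $\mathbf{GR}^{\mathbf{d}}$ only records regularity for \emph{some} set $S$ and \emph{some} $\lambda$, and says nothing about the degree of this particular combination. Under the paper's literal (leading-term) reading of $\{\cdot\}^{\infty}_d$ the conclusion can even fail when top-degree cancellation occurs: take $n=1$, $q=2$, $K=S=[0,+\infty)$, $f_1(x)=x^2+x$, $f_2(x)=-x^2+x$, $\lambda=(1,1)$, so that $\sum\lambda_i f_i=2x$, $d=1$ and $SOL(S_{\infty},2x)=\{0\}$; yet $g=((1-\eta)x^2+x,\ -x^2+x)$ is arbitrarily close to $f$, while $\sum\lambda_i g_i=-\eta x^2+2x$ has leading term $-\eta x^2$ and $SOL(S_{\infty},-\eta x^2)=\emptyset$. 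So your fixed-degree normalization (or, alternatively, an added no-cancellation hypothesis such as $d=\max\{d_i\mid \lambda_i\neq 0\}$) is not a mere convenience but is needed for the statement to be true; state that reading explicitly when you write the proof up.
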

\begin{proof}
Since $\mathbf{GR}^{\mathbf{d}}$ is open in $\mathbf{P}_{\mathbf{d}}$  (by Proposition \ref{open}) and $f\in\mathbf{GR}^{\mathbf{d}}$, there exists an open ball $\mathbf{B}(f, \delta)\subseteq\mathbf{GR}^{\mathbf{d}}$ such that either $SOL(S_\infty, \{\sum_{i=1}^{q}\lambda_i g_i\}^{\infty}_d)=\{0\}$ or $SOL(S_\infty, \{\sum_{i=1}^{q}\lambda_i g_i\}^{\infty}_d)=\emptyset$ for all $g=(g_1,g_2,\dots,g_q)\in \mathbf{B}(f, \delta)$. Since $g\in \mathbf{B}(f, \delta)$, we can suppose $\deg g=\mathbf{d}$ for all $g\in \mathbf{B}(f, \delta)$. Let $d=\deg \sum_{i=1}^{q}\lambda_i g_i$.

\emph{(i)} It suffices to show that there exists $\epsilon\in (0,\delta)$ such that $SOL(S_\infty, \{\sum_{i=1}^{q}\lambda_i g_i\}^{\infty}_d)=\{0\}$ for all $g=(g_1,g_2,\dots,g_q)\in \mathbf{B}(f, \epsilon)$ when $SOL(S_\infty, \{\sum_{i=1}^{q}\lambda_i f_i\}^{\infty}_d)=\{0\}$. Suppose on the contrary that  for any $\epsilon\in (0,\delta)$, there exists $g^{\epsilon}=(g^\epsilon_1,g^\epsilon_2,\dots,g^\epsilon_q)\in \mathbf{P}_{\mathbf{d}}$ with $\|g^{\epsilon}-f\|<\epsilon$  such that $SOL(S_{\infty}, \{\sum_{i=1}^{q}\lambda_i g^{\epsilon}_i\}^{\infty}_d)=\emptyset$.
It follows that there exists $x_{\epsilon}\in S_{\infty}\backslash\{0\}$ such that
 	\begin{equation}\label{3.425}
 		(\sum_{i=1}^{q}\lambda_i g_i^{\epsilon})_{d}^{\infty}(x_{\epsilon})<(\sum_{i=1}^{q}\lambda_i g_i^{\epsilon})_{d}^{\infty}(0)=0.
 	\end{equation}
Since $g^{\epsilon}\in \mathbf{B}(f, \epsilon)\subset  \mathbf{GR}^{\mathbf{d}}$, we get $\deg (\sum_{i=1}^{q}\lambda_i g_i^{\epsilon})=d$. Because $g^{\epsilon}\to f$ as $\epsilon\to 0$, we have $(\sum_{i=1}^{q}\lambda_i g_i^{\epsilon})_{d}^{\infty}\to (\sum_{i=1}^{q}\lambda_i f_i)_{d}^{\infty}$ as $\epsilon\to 0$. Without loss of generality, we assume that $\frac{x_{\epsilon}}{\|x_{\epsilon}\|}\to x^*\in S_{\infty}\backslash \{0\}$ as $\epsilon\to 0$.  Dividing the both sides of (\ref{3.425}) by $\|x_{\epsilon}\|^{d}$ and then letting $\epsilon\rightarrow 0$, we get
$$(\sum_{i=1}^{q}\lambda_i f_{i})^{\infty}_{d}(x^*)\leq 0.$$
It follows that $x^*\in SOL(S_\infty, \{\sum_{i=1}^{q}\lambda_i f_i\}^{\infty}_d)\setminus\{0\}$, which  reaches a contradiction to $SOL(S_\infty, \{\sum_{i=1}^{q}\lambda_i f_i\}^{\infty}_d)=\{0\}$.

\emph{(ii)}  It suffices to show that there exists $\epsilon\in (0,\delta)$ such that $SOL(S_\infty, \{\sum_{i=1}^{q}\lambda_i g_i\}^{\infty}_d)=\emptyset$ for all $g=(g_1,g_2,\dots,g_q)\in \mathbf{B}(f, \epsilon)$ when $SOL(S_\infty, \{\sum_{i=1}^{q}\lambda_i f_i\}^{\infty}_d)=\emptyset$. Suppose on the contrary that  for any $\epsilon\in (0,\delta)$, there exists $g^{\epsilon}=(g^\epsilon_1,g^\epsilon_2,\dots,g^\epsilon_q)\in \mathbf{P}_{\mathbf{d}}$ with $\|g^{\epsilon}-f\|<\epsilon$  such that $SOL(S_{\infty}, \{\sum_{i=1}^{q}\lambda_i g^{\epsilon}_i\}^{\infty}_d)=\{0\}$. It follows that
 	\begin{equation*}\label{3.5}
 		0=(\sum_{i=1}^{q}\lambda_i g_i^{\epsilon})^{\infty}_{d}(0)\leq (\sum_{i=1}^{q}\lambda_i g_i^{\epsilon})^{\infty}_{d}(v)
 	\end{equation*}
for any $v\in S_{\infty}$. Since $g^{\epsilon}\in \mathbf{B}(f, \epsilon)\subset  \mathbf{GR}^{\mathbf{d}}$, we get $\deg (\sum_{i=1}^{q}\lambda_i g_i^{\epsilon})=d$. Because $g^{\epsilon}\to f$ as $\epsilon\to 0$, we have $(\sum_{i=1}^{q}\lambda_i g_i^{\epsilon})_{d}^{\infty}\to (\sum_{i=1}^{q}\lambda_i f_i)_{d}^{\infty}$ as $\epsilon\to 0$. Letting $\epsilon\rightarrow 0$ in the above inequality, we get
$$0=(\sum_{i=1}^{q}\lambda_i f_{i})^{\infty}_{d}(0)\leq (\sum_{i=1}^{q}\lambda_i f_{i})^{\infty}_{d}(v).$$
Since $v\in S_{\infty}$ is arbitrary, we get $0\in SOL(S_\infty, \{\sum_{i=1}^{q}\lambda_i f_i\}^{\infty}_d)$, a contradiction.
\end{proof}

Similar to the proof of Theorem 4.4 in \cite{LDY1}, we can also obtain that the relatively strong regularity of a vector polynomial remains stable under a small perturbation as follows and we omit its proof.

\begin{theorem}\label{local regu}
Let $\bar x\in K$ and $S\subseteq\mathbf{R}^n$ satisfy with $(K_{\bar x})_\infty\subseteq S_\infty\subseteq K_\infty$. Then the following conclusions hold:
	\item{(i)} If ${SOL}^{w}(S_{\infty}, f^{\infty}_{\mathbf{d}})=\{0\}$, then there exists $\epsilon>0$ such that ${SOL}^{w}(S_{\infty}, f^{\infty}_{\mathbf{d}})=\{0\}$ for all $g\in \mathbf{P}_{\mathbf{d}}$ satisfying $\|g-f\|<\epsilon$;
	\item{(ii)} If $SOL^{w}(S_{\infty}, f^{\infty}_{\mathbf{d}})=\emptyset$, then there exists $\epsilon>0$ such that ${SOL}^{w}(S_{\infty}, f^{\infty}_{\mathbf{d}})=\emptyset$ for all $g\in \mathbf{P}_{\mathbf{d}}$ satisfying $\|g-f\|<\epsilon$.
\end{theorem}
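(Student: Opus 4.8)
The plan is to run the perturbation scheme of Theorem \ref{local R+regu} with the scalar solution set replaced by the weak Pareto set $SOL^{w}$; the assertion being proved is that $SOL^{w}(S_{\infty}, g^{\infty}_{\mathbf{d}})$ retains the value $\{0\}$ in (i), respectively $\emptyset$ in (ii), for all $g$ near $f$. First I recast the regularity via Remark \ref{compa}: a vector polynomial $g$ is relatively strongly regular with $S$ on $K$ if and only if $SOL^{w}(S_{\infty}, g^{\infty}_{\mathbf{d}})=\{0\}$ or $SOL^{w}(S_{\infty}, g^{\infty}_{\mathbf{d}})=\emptyset$, so the hypothesis places $f$ in $\mathbf{GR}^{\mathbf{d}}_{s}$, which is open in $\mathbf{P}_{\mathbf{d}}$ by Proposition \ref{open}. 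Hence there is an open ball $\mathbf{B}(f,\delta)\subseteq \mathbf{GR}^{\mathbf{d}}_{s}$ on which $\deg g_{i}=d_{i}$ for every $i$ and on which every $g$ obeys the dichotomy $SOL^{w}(S_{\infty}, g^{\infty}_{\mathbf{d}})=\{0\}$ or $SOL^{w}(S_{\infty}, g^{\infty}_{\mathbf{d}})=\emptyset$. I will also use the continuity recorded in the preliminaries: if $g^{k}\to f$ in $\mathbf{P}_{\mathbf{d}}$ and $x^{k}\to x$, then $(g^{k}_{i})^{\infty}_{d_{i}}\to (f_{i})^{\infty}_{d_{i}}$ and $(g^{k}_{i})^{\infty}_{d_{i}}(x^{k})\to (f_{i})^{\infty}_{d_{i}}(x)$.

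For (i), assume $SOL^{w}(S_{\infty}, f^{\infty}_{\mathbf{d}})=\{0\}$, so in particular $0\in SOL^{w}(S_{\infty}, f^{\infty}_{\mathbf{d}})$. Arguing by contradiction, if no admissible $\epsilon$ existed there would be a sequence $g^{k}\to f$ inside $\mathbf{B}(f,\delta)$ with $SOL^{w}(S_{\infty}, (g^{k})^{\infty}_{\mathbf{d}})\neq\{0\}$, which by the dichotomy forces $SOL^{w}(S_{\infty}, (g^{k})^{\infty}_{\mathbf{d}})=\emptyset$. Then $0\notin SOL^{w}(S_{\infty}, (g^{k})^{\infty}_{\mathbf{d}})$, so since $(g^{k}_{i})^{\infty}_{d_{i}}(0)=0$ there is $x_{k}\in S_{\infty}$, necessarily $x_{k}\neq 0$, with $(g^{k}_{i})^{\infty}_{d_{i}}(x_{k})<0$ for all $i$. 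Normalizing by homogeneity, $(g^{k}_{i})^{\infty}_{d_{i}}(x_{k}/\|x_{k}\|)<0$; passing to a subsequence, $x_{k}/\|x_{k}\|\to x^{*}\in S_{\infty}$ with $\|x^{*}\|=1$ (using that $S_{\infty}$ is a closed cone), and the continuity above gives $(f_{i})^{\infty}_{d_{i}}(x^{*})\leq 0$ for all $i$.

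The limit yields only weak inequalities, so $x^{*}$ need not itself strictly dominate $0$; nevertheless I claim $x^{*}\in SOL^{w}(S_{\infty}, f^{\infty}_{\mathbf{d}})$. Indeed, if some $z\in S_{\infty}$ satisfied $(f_{i})^{\infty}_{d_{i}}(z)<(f_{i})^{\infty}_{d_{i}}(x^{*})$ for all $i$, then combining with $(f_{i})^{\infty}_{d_{i}}(x^{*})\leq 0=(f_{i})^{\infty}_{d_{i}}(0)$ would give $(f_{i})^{\infty}_{d_{i}}(z)<0$ for all $i$, contradicting $0\in SOL^{w}(S_{\infty}, f^{\infty}_{\mathbf{d}})$. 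Hence $x^{*}\in SOL^{w}(S_{\infty}, f^{\infty}_{\mathbf{d}})\setminus\{0\}$, contradicting $SOL^{w}(S_{\infty}, f^{\infty}_{\mathbf{d}})=\{0\}$, which proves (i). This \emph{propagation of strictness through the weakly efficient point} $0$ is the one genuinely new step compared with the scalar argument of Theorem \ref{local R+regu}, and it is where I expect the main difficulty to lie.

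For (ii), assume $SOL^{w}(S_{\infty}, f^{\infty}_{\mathbf{d}})=\emptyset$; then $0\notin SOL^{w}(S_{\infty}, f^{\infty}_{\mathbf{d}})$, so there is a fixed $w\in S_{\infty}$ with $(f_{i})^{\infty}_{d_{i}}(w)<0$ for all $i$. If no admissible $\epsilon$ existed, the dichotomy would produce $g^{k}\to f$ in $\mathbf{B}(f,\delta)$ with $SOL^{w}(S_{\infty}, (g^{k})^{\infty}_{\mathbf{d}})=\{0\}$, i.e. $0\in SOL^{w}(S_{\infty}, (g^{k})^{\infty}_{\mathbf{d}})$. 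But $(g^{k}_{i})^{\infty}_{d_{i}}(w)\to (f_{i})^{\infty}_{d_{i}}(w)<0$ for each of the finitely many indices $i$, so for $k$ large enough $(g^{k}_{i})^{\infty}_{d_{i}}(w)<0$ for all $i$; thus $w$ strictly dominates $0$ and $0\notin SOL^{w}(S_{\infty}, (g^{k})^{\infty}_{\mathbf{d}})$, a contradiction. Therefore such an $\epsilon$ exists, proving (ii).
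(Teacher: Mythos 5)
The paper gives no proof of Theorem \ref{local regu} at all (it defers to Theorem 4.4 of \cite{LDY1}), so the natural benchmark is the paper's proof of the companion result, Theorem \ref{local R+regu}, which your argument transplants to the weak Pareto setting. Your limit machinery (normalization, compactness of the unit sphere in the closed cone $S_{\infty}$, coefficient-wise convergence of recession polynomials, and the propagation of strict domination through the weakly efficient point $0$) is all correct. The genuine gap is the ``dichotomy'' step. Membership of $g$ in $\mathbf{GR}^{\mathbf{d}}_{s}$ is defined with an existential quantifier: it means $g$ is relatively strongly regular with \emph{some} closed set $S_{g}$ attached to \emph{some} point $\bar x_{g}$, whose sublevel set is formed with $g$, not with $f$. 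Openness of $\mathbf{GR}^{\mathbf{d}}_{s}$ (Proposition \ref{open}) therefore gives no control on $SOL^{w}(S_{\infty}, g^{\infty}_{\mathbf{d}})$ for the \emph{fixed} $S$ of the theorem; asserting that every $g$ in a ball satisfies $SOL^{w}(S_{\infty}, g^{\infty}_{\mathbf{d}})\in\{\{0\},\emptyset\}$ for this fixed $S$ is, modulo the hypothesis, essentially the statement being proved. As a result, in part (i) the case where $SOL^{w}(S_{\infty},(g^{k})^{\infty}_{\mathbf{d}})$ contains a nonzero vector is never treated. (The same criticism applies verbatim to the dichotomy step in the paper's own proof of Theorem \ref{local R+regu}; you have inherited that flaw rather than introduced it, but it is still a flaw.)

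The gap is closable with tools you already deploy. If $v_{k}\in SOL^{w}(S_{\infty},(g^{k})^{\infty}_{\mathbf{d}})$ with $v_{k}\neq 0$, homogeneity lets you assume $\|v_{k}\|=1$ (if $z$ strictly dominated $t v_{k}$, then $z/t$ would strictly dominate $v_{k}$); passing to a subsequence, $v_{k}\to v^{*}\in S_{\infty}$ with $\|v^{*}\|=1$, and if some $z\in S_{\infty}$ strictly dominated $v^{*}$ for $f^{\infty}_{\mathbf{d}}$, then the convergences $(g^{k}_{i})^{\infty}_{d_{i}}(z)\to (f_{i})^{\infty}_{d_{i}}(z)$ and $(g^{k}_{i})^{\infty}_{d_{i}}(v_{k})\to (f_{i})^{\infty}_{d_{i}}(v^{*})$ would make $z$ strictly dominate $v_{k}$ for $(g^{k})^{\infty}_{\mathbf{d}}$ when $k$ is large, a contradiction; hence $v^{*}\in SOL^{w}(S_{\infty},f^{\infty}_{\mathbf{d}})\setminus\{0\}$, contradicting the hypothesis of (i). In part (ii) the appeal to the dichotomy is harmless, because the only consequence you use --- that $SOL^{w}(S_{\infty},(g^{k})^{\infty}_{\mathbf{d}})\neq\emptyset$ implies $0\in SOL^{w}(S_{\infty},(g^{k})^{\infty}_{\mathbf{d}})$ --- holds for any homogeneous vector polynomial on a closed cone: if $0$ were strictly dominated by some $z$, then for any $v\in S_{\infty}$ and $t$ large enough, $t^{d_{i}}(g^{k}_{i})^{\infty}_{d_{i}}(z)<(g^{k}_{i})^{\infty}_{d_{i}}(v)$ for all $i$, so no point would be weakly efficient. (You cannot instead cite Proposition \ref{necessary}(ii) here, since your $S$ need not satisfy $S_{\infty}\subseteq\{x\mid f^{\infty}_{\mathbf{d}}(x)\leq 0\}$.) With these two repairs --- the ray argument replacing the dichotomy in (ii), and the added nonzero-solution case in (i) --- your proof is complete and self-contained, which is more than the paper itself provides.
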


\begin{remark}
By Example \ref{241225a}, we see that the relatively weak regularity of a vector polynomial dose not have stability result under a small perturbation as Theorems \ref{local R+regu} and \ref{local regu}.
\end{remark}

Observe that $f^{\infty}_{\mathbf{d}}=(f+g)^{\infty}_{\mathbf{d}}$ for all $g=(g_{1},  \dots, g_{q})\in \mathbf{P}_{\mathbf{d}}$ with $\deg g_{i}< \deg f_{i}$, $i=1, \dots, q$. As a consequence, we have the following result.

\begin{proposition}\label{perturb} Let $\bar x\in K$, $\lambda=(\lambda_1,\lambda_2,\dots,\lambda_q)\in \mathbf{R}^q_+\backslash\{0\}$ and $S\subseteq\mathbf{R}^n$ satisfy with $(K_{\bar x})_\infty\subseteq S_\infty\subseteq K_\infty$. Then for any vector polynomial $g=(g_{1}, \dots, g_{q})$ with $\deg g_{i}<\deg  f_{i}, i=1, \cdots, q$, the following conclusions hold:	
\begin{itemize}
\item[(i)] If $SOL(S_\infty, \{\sum_{i=1}^{q}\lambda_i f_i\}^{\infty}_d)=\{0\}$, then $SOL(S_\infty, \{\sum_{i=1}^{q}\lambda_i (f_i+g_i)\}^{\infty}_d)=\{0\}$.

\item[(ii)] If $SOL(S_\infty, \{\sum_{i=1}^{q}\lambda_i f_i\}^{\infty}_d)=\emptyset$, then $SOL(S_\infty, \{\sum_{i=1}^{q}\lambda_i (f_i+g_i)\}^{\infty}_d)=\emptyset$.

\item[(iii)]  If ${SOL}^{s}(K_{\infty}, f^{\infty}_{\mathbf{d}})=\{0\}$, then ${SOL}^{s}(K_{\infty}, (f+g)^{\infty}_{\mathbf{d}})= \{0\}$.

\item[(iv)]  If ${SOL}^{s}(K_{\infty}, f^{\infty}_{\mathbf{d}})=\emptyset$, then ${SOL}^{s}(K_{\infty}, (f+g)^{\infty}_{\mathbf{d}})= \emptyset$.

\item[(v)] If ${SOL}^{w}(K_{\infty}, f^{\infty}_{\mathbf{d}})=\{0\}$, then ${SOL}^{w}(K_{\infty}, (f+g)^{\infty}_{\mathbf{d}})= \{0\}$.

\item[(vi)] If ${SOL}^{w}(K_{\infty}, f^{\infty}_{\mathbf{d}})=\emptyset$, then ${SOL}^{w}(K_{\infty}, (f+g)^{\infty}_{\mathbf{d}})= \emptyset$.
\end{itemize}
\end{proposition}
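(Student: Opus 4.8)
The plan is to reduce all six implications to the single structural fact recorded just before the statement, namely that the vector recession polynomial is insensitive to the perturbation $g$. First I would make this precise: since $\deg g_i < \deg f_i = d_i$, the degree-$d_i$ homogeneous part of $f_i + g_i$ is exactly the degree-$d_i$ part of $f_i$, so $(f_i+g_i)^\infty_{d_i} = (f_i)^\infty_{d_i}$ for every $i\in\{1,\dots,q\}$, and therefore $(f+g)^\infty_{\mathbf{d}} = f^\infty_{\mathbf{d}}$ as vector polynomials on $\mathbf{R}^n$. This identity is the engine of the whole proof.

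Granting it, statements (iii)--(vi) require no further work. The recession maps $(f+g)^\infty_{\mathbf{d}}$ and $f^\infty_{\mathbf{d}}$ are literally the same function, and in each of these four items they are compared over the same cone $K_\infty$. Hence $SOL^{s}(K_\infty,(f+g)^\infty_{\mathbf{d}}) = SOL^{s}(K_\infty,f^\infty_{\mathbf{d}})$ and $SOL^{w}(K_\infty,(f+g)^\infty_{\mathbf{d}}) = SOL^{w}(K_\infty,f^\infty_{\mathbf{d}})$ as sets, so each of the conclusions ``$=\{0\}$'' and ``$=\emptyset$'' transfers verbatim from $f$ to $f+g$.

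For (i) and (ii) I would establish the analogous invariance at the level of the scalarized leading term, i.e. $\{\sum_i\lambda_i(f_i+g_i)\}^\infty_{d'} = \{\sum_i \lambda_i f_i\}^\infty_{d}$ with $d' = d$, where $d = \deg\sum_i\lambda_i f_i$ and $d' = \deg\sum_i\lambda_i(f_i+g_i)$. Writing $\sum_i\lambda_i(f_i+g_i) = \sum_i\lambda_i f_i + \sum_i\lambda_i g_i$, I would isolate the top-degree homogeneous part: with $D = \max\{d_i : \lambda_i\neq 0\}$, only the components with $d_i = D$ feed degree $D$, while every $g_i$, having $\deg g_i < d_i \le D$, contributes nothing there. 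Thus $\sum_i\lambda_i f_i$ and $\sum_i\lambda_i(f_i+g_i)$ carry the same degree-$D$ form $\sum_{d_i=D}\lambda_i (f_i)^\infty_{d_i}$, exactly the identity $\{\sum_i\lambda_i f_i\}^\infty_{d} = \sum_{i\in I}\lambda_i(f_i)^\infty_{d_i}$ used in the proof of Proposition \ref{250107a}. Consequently $d'=d$ and the two scalar recession polynomials coincide, whence $SOL(S_\infty,\{\sum_i\lambda_i(f_i+g_i)\}^\infty_{d'}) = SOL(S_\infty,\{\sum_i\lambda_i f_i\}^\infty_{d})$, which delivers both (i) and (ii) at once.

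The delicate point, and the step I would handle most carefully, is precisely this degree bookkeeping for the scalarized polynomial. If the top-degree forms $(f_i)^\infty_{d_i}$ with $d_i = D$ cancel under the weights $\lambda_i$, then $\deg\sum_i\lambda_i f_i$ drops below $D$, and in principle the lower-degree perturbation $\sum_i\lambda_i g_i$ could reach that reduced degree and disturb the leading term; this is exactly the configuration excluded by the non-cancellation convention underlying the identity quoted from Proposition \ref{250107a}. Under that standing convention one has $\deg\sum_i\lambda_i g_i < \deg\sum_i\lambda_i f_i$, the leading homogeneous part is preserved, and the argument closes. Note that this subtlety is confined to (i)--(ii): for (iii)--(vi) the comparison is componentwise through $f^\infty_{\mathbf{d}}$, so no cancellation can occur and those parts are unconditional.
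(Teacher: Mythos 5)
Your proof is correct and rests on the same idea as the paper's: the paper's entire argument is the observation, stated immediately before the proposition, that $(f+g)^{\infty}_{\mathbf{d}}=f^{\infty}_{\mathbf{d}}$ whenever $\deg g_{i}<\deg f_{i}$, from which (iii)--(vi) follow at once since the comparisons take place over the fixed cone $K_{\infty}$. Where you go beyond the paper is in items (i)--(ii): the paper gives no separate argument for the scalarized sets, and your degree bookkeeping (that $\deg\sum_{i}\lambda_{i}g_{i}<\deg\sum_{i}\lambda_{i}f_{i}$, so the leading homogeneous form is preserved) is exactly what is needed there. The cancellation caveat you flag is genuine and not merely cosmetic: if $\sum_{i\in I}\lambda_{i}(f_{i})^{\infty}_{d_{i}}\equiv 0$, the conclusion can fail outright --- for instance, with $n=1$, $K=S=\mathbf{R}$, $f=(x^{2}+x,\,-x^{2})$, $g=(-x,\,1)$ and $\lambda=(1,1)$ one has $\sum_{i}\lambda_{i}f_{i}=x$, so $SOL(S_{\infty},\{\sum_{i}\lambda_{i}f_{i}\}^{\infty}_{d})=\emptyset$, yet $\sum_{i}\lambda_{i}(f_{i}+g_{i})\equiv 1$ and its solution set over $S_{\infty}$ is all of $\mathbf{R}$ --- so (i)--(ii) are valid only under the non-cancellation convention implicit in the paper's identity $\{\sum_{i}\lambda_{i}f_{i}\}^{\infty}_{d}=\sum_{i\in I}\lambda_{i}(f_{i})^{\infty}_{d_{i}}$, which you correctly invoke.
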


The following result is a direct consequence of Theorems \ref{local R+regu} and \ref{local regu}, and Proposition \ref{perturb}.

\begin{corollary}
Let the nonempty index set $I\subseteq\{1,2,\dots,q\}$. For any vector polynomial $g=(g_{1}, \dots, g_{q})$ with $\mathrm{ deg }\  g_{i}<\mathrm{ deg }\ f_{i}, i=1,\cdots, q$, the following conclusions hold:	
\begin{itemize}
\item[(i)] If $f$ is  $I$-relatively $\mathbf{R}^q_+$-zero-regular,  then $f+g$ is relatively $I$-$\mathbf{R}^q_+$-zero-regular.
\item[(ii)]  If $f$ is relatively weakly regular, then $f+g$ is relatively weakly  regular.	
\item[(iii)] If $f$ is relatively strongly regular, then $f+g$ is relatively strongly  regular.
\end{itemize}
\end{corollary}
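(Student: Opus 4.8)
The plan is to read each of the three regularity notions through its solution-set characterization in Remark~\ref{compa} and then quote Proposition~\ref{perturb}, the whole argument resting on the single structural fact recorded just above the corollary: $f^{\infty}_{\mathbf{d}}=(f+g)^{\infty}_{\mathbf{d}}$ whenever $\deg g_i<\deg f_i$ for every $i$. By Remark~\ref{compa}, relative $I$-$\mathbf{R}^q_+$-zero-regularity, relative weak regularity and relative strong regularity amount, for a suitable witness $(\bar x,S,\lambda)$, to the sets $SOL(S_\infty,\{\sum_{i}\lambda_i f_i\}^{\infty}_d)$, $SOL^{s}(S_\infty,f^{\infty}_{\mathbf{d}})$ and $SOL^{w}(S_\infty,f^{\infty}_{\mathbf{d}})$ being either $\{0\}$ or $\emptyset$. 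Thus the task is to transfer these equalities from $f$ to $f+g$ while keeping a valid witness.

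For conclusions (ii) and (iii) I would reuse the \emph{same} set $S$ and observe that, since $(f+g)^{\infty}_{\mathbf{d}}=f^{\infty}_{\mathbf{d}}$ as functions, one has the verbatim identities $SOL^{s}(S_\infty,(f+g)^{\infty}_{\mathbf{d}})=SOL^{s}(S_\infty,f^{\infty}_{\mathbf{d}})$ and $SOL^{w}(S_\infty,(f+g)^{\infty}_{\mathbf{d}})=SOL^{w}(S_\infty,f^{\infty}_{\mathbf{d}})$, so boundedness is inherited with no computation; this is exactly Proposition~\ref{perturb}(iii)--(vi), and it is mirrored by the stability Theorems~\ref{local R+regu} and~\ref{local regu}, because $\deg g_i<d_i$ places $f+g$ in every norm-ball of $\mathbf{P}_{\mathbf{d}}$ around $f$. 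For (i) the only extra care is that the scalar leading term of $\sum_i\lambda_i(f_i+g_i)$ need not coincide verbatim with that of $\sum_i\lambda_i f_i$ if the top-degree terms cancel in the combination; this is precisely the case handled by Proposition~\ref{perturb}(i)--(ii), which I would invoke with the same $\lambda$, hence the same index set $I=\{i:\lambda_i\neq 0\}$, to obtain $SOL(S_\infty,\{\sum_i\lambda_i(f_i+g_i)\}^{\infty}_d)\in\{\{0\},\emptyset\}$.

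The genuine obstacle is the \emph{admissibility of the witness} after perturbation: relative regularity requires $(K_{\bar x})_\infty\subseteq S_\infty\subseteq K_\infty$, and the sublevel set $K_{\bar x}$ is formed from the \emph{current} objective, so passing from $f$ to $f+g$ replaces $K_{\bar x}$ by $\widetilde{K}_{\bar x}=\{x\in K\mid (f+g)(x)\leq (f+g)(\bar x)\}$, which changes the lower cone in the sandwich. Proposition~\ref{perturb} transfers the solution-set equality on a \emph{given} $S$ but says nothing about this containment, so I must supply it. I would exploit the invariance of the recession data: for \emph{any} $\bar x$ one has, by the Preliminaries, $(\widetilde{K}_{\bar x})_\infty\subseteq K_\infty\cap\{x\mid (f+g)^{\infty}_{\mathbf{d}}(x)\leq 0\}=K_\infty\cap\{x\mid f^{\infty}_{\mathbf{d}}(x)\leq 0\}$, so the right-hand cone is \emph{identical} for $f$ and $f+g$. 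Taking the witnessing $S$ with $S_\infty=K_\infty\cap\{x\mid f^{\infty}_{\mathbf{d}}(x)\leq 0\}$ then makes $(\bar x,S)$ admissible for both maps at every $\bar x$, and on such an $S$ the transfers of the previous paragraph apply directly. I expect the delicate point to be justifying that one may use this canonical $S$ rather than an arbitrary smaller one furnished by the hypothesis; here I would lean on the equivalences of Theorem~\ref{2410122} and Corollary~\ref{gpro}, which collapse the three notions on any $S$ with $S_\infty\subseteq\{f^{\infty}_{\mathbf{d}}\leq 0\}$, so that regularity is controlled entirely by the common recession map. Everything else reduces to the routine substitutions described above.
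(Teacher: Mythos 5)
Your first two paragraphs are, in substance, the paper's entire proof: the paper derives this corollary in one line from Proposition~\ref{perturb} (together with Theorems~\ref{local R+regu} and~\ref{local regu}) after observing that $f^{\infty}_{\mathbf{d}}=(f+g)^{\infty}_{\mathbf{d}}$, implicitly keeping the same witness $(\bar x,S,\lambda)$ throughout. (One side remark of yours is wrong, though: a degree-deficient perturbation $g$ need not be small in the norm of $\mathbf{P}_{\mathbf{d}}$, so $f+g$ does \emph{not} lie in every norm-ball around $f$; the corollary is the complement of the $\epsilon$-stability theorems, covering arbitrarily large lower-order perturbations, not a special case of them.) The genuine gap is in your third paragraph. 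You are right that witness admissibility is the delicate point --- the sandwich $(K_{\bar x})_\infty\subseteq S_\infty\subseteq K_\infty$ is taken with respect to the \emph{current} objective, and the paper silently ignores this --- but your repair does not close it. Replacing the witness $S_0$ furnished by the hypothesis with the canonical set $S$ satisfying $S_\infty=K_\infty\cap\{x\mid f^{\infty}_{\mathbf{d}}(x)\leq 0\}$ requires regularity to survive an \emph{enlargement} of the ground cone, and it does not: boundedness of $SOL$, $SOL^{s}$, $SOL^{w}$ is not monotone in the feasible cone. Theorem~\ref{2410122} and Corollary~\ref{gpro}, which you invoke for exactly this step, compare the different solution concepts over one and the same set $S$; they say nothing about two nested sets $S_0\subseteq S$.

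Concretely, take $K=\mathbf{R}^2$, $f=(x_1^2+x_2,\,-x_2)$, $\bar x=(0,0)$. Then $K_{\bar x}=\{(0,0)\}$, so $S_0=\{0\}$ witnesses all three relative regularity conditions. But the canonical cone is $K_\infty\cap\{f^{\infty}_{\mathbf{d}}\leq 0\}=\{(0,t):t\geq 0\}$, on which $f^{\infty}_{\mathbf{d}}=(x_1^2,-x_2)$: every point of the ray lies in $SOL^{w}$ (strict domination would require $u_1^2<0$), and for $\lambda=(1,1)$ the leading form $\{f_1+f_2\}^{\infty}_{2}=x_1^2$ vanishes identically on the ray, so $SOL$ is the whole ray as well; both are unbounded. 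Thus regularity is destroyed by precisely the enlargement your argument needs. Worse, with $g=(-2x_2,\,0)$ the sublevel sets of $f+g=(x_1^2-x_2,\,-x_2)$ have asymptotic cone equal to that ray, so \emph{every} admissible witness for $f+g$ contains it and $SOL^{w}$ stays unbounded: the witness problem you flagged is substantive and cannot be patched by passing to a canonical $S$ (under the literal existential reading of the statement, this example even contradicts conclusion (iii)). The corollary is safe only under the fixed-witness reading that your first two paragraphs --- and the paper --- implicitly use, in which the sandwich condition is imposed relative to $f$ alone and Proposition~\ref{perturb} finishes the argument.
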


\subsection{Genericities of the relative regularity conditions}

In this subsection, we discuss the genericities of the relative regularity conditions of vector polynomials. We assume that the constraint $K$ is denoted as follows
\begin{equation}\label{250101a}
K=\{x\in \mathbf{R}^n\vert g_i(x)\leq 0, i\in \{1,2,\dots,m\}\},
\end{equation}
where $g_i, i\in \{1,2,\dots,m\}$ are convex polynomial. By Remark 5.1 in \cite{HV1}, we know that the recession cone of $K$ is a nonempty polyhedral cone, and there exists a matrix $A\in \mathbf{R}^{m\times n}$ such that
\begin{equation}\label{250101b}
K_\infty=\{x\in \mathbf{R}^n\vert Ax\leq 0\}.
\end{equation}
We recall the definition of genericity as follows.

\begin{definition}
We say a subset $S$ is generic in $\mathbf{R}^n$, if $S$ contains a countable intersection of dense and open sets in $\mathbf{R}^n$.
\end{definition}

Clearly, if $S_1$ is generic in $\mathbf{R}^n$ and $S_1\subseteq S_2$ then $S_2$ also is generic in $\mathbf{R}^n$. To discuss the genericity of the relative regularity conditions, we need the following result.

\begin{lemma}\cite[Theorem 5.1]{HV1}\label{250101c}
Assume that $K$ be represented by (\ref{250101a}) and the cone $K_\infty$ represented by (\ref{250101b}), where $A$ is full rank. Then the set $\mathbf{G}^d$ is generic in $\mathbf{P}_{d}$, where $\mathbf{G}^d$ the family of  all polynomials $p$  with $\deg p=d$ such that $p$ is regular on $K$.
\end{lemma}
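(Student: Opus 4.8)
The plan is to reduce the statement to a property of the leading form $p^{\infty}_{d}$ on the polyhedral cone $K_{\infty}$ and then exploit convexity. Recall that, by definition, $p$ is regular on $K$ precisely when $SOL(K_{\infty},p^{\infty}_{d})$ is bounded, and by Remark \ref{compa} (for $q=1$) this is equivalent to $SOL(K_{\infty},p^{\infty}_{d})=\{0\}$ or $SOL(K_{\infty},p^{\infty}_{d})=\emptyset$. First I would record the basic dichotomy for the homogeneous form $h:=p^{\infty}_{d}$ on the cone $K_{\infty}$: since $h(tx)=t^{d}h(x)$ for $t>0$, if $h(v)<0$ for some $v\in K_{\infty}$ then $h(tv)\to-\infty$ along that ray, so $SOL(K_{\infty},h)=\emptyset$; while if $h\ge 0$ on $K_{\infty}$ then the minimum value is $0$ and $SOL(K_{\infty},h)=\{x\in K_{\infty}\mid h(x)=0\}$, a cone, which is bounded iff it equals $\{0\}$, i.e. iff $h>0$ on $K_{\infty}\setminus\{0\}$. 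Consequently $p$ is \emph{non}-regular on $K$ if and only if $h\ge 0$ on $K_{\infty}$ and $h$ vanishes at some point of $K_{\infty}\setminus\{0\}$. Because $p^{\infty}_{d}$ ignores all terms of degree below $d$ (cf. Proposition \ref{perturb}), regularity depends only on the degree-$d$ homogeneous part, so I may work in the finite-dimensional space of forms.

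Next I would phrase this through a convex cone. Let $P:=\{h\ \text{a degree-}d\ \text{form}\mid h\ge 0\ \text{on}\ K_{\infty}\}$, a closed convex cone in the space of forms. Testing positivity on the compact section $K_{\infty}\cap S^{n-1}$ shows $\mathrm{int}\,P=\{h\mid h>0\ \text{on}\ K_{\infty}\setminus\{0\}\}$: a strictly positive form stays strictly positive under small perturbations by compactness and homogeneity, and conversely a form in $\mathrm{int}\,P$ cannot vanish at a nonzero $v\in K_{\infty}$, for otherwise subtracting a small multiple of a form positive at $v$ would leave $P$. Combining with the dichotomy above, the set of non-regular forms is exactly $P\setminus\mathrm{int}\,P=\partial P$, the topological boundary of the closed convex set $P$, and the set of regular forms is its open complement $(\text{forms}\setminus P)\cup\mathrm{int}\,P$.

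Then I would invoke the elementary fact that the boundary of a closed convex set is nowhere dense: if $\mathrm{int}\,P\neq\emptyset$ this is standard, and if $\mathrm{int}\,P=\emptyset$ then $P$ lies in a proper linear subspace (a convex set with empty interior has lower-dimensional affine hull) and $\partial P=P$ is nowhere dense anyway. Hence the regular forms constitute an open dense subset of the space of forms. Finally I would pull back along the linear projection $\Pi:\mathbf{P}_{d}\to\{\text{forms}\}$, $\Pi(p)=p^{\infty}_{d}$, which is continuous, surjective and therefore open: preimages of open sets are open, and preimages of dense sets under an open surjection are dense, so $\mathbf{G}^{d}=\Pi^{-1}(\{\text{regular forms}\})$ is open and dense in $\mathbf{P}_{d}$. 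Note the zero form is non-regular whenever $K_{\infty}\neq\{0\}$, so $\Pi^{-1}$ automatically forces $\deg p=d$; the case $K_{\infty}=\{0\}$ is trivial since every $p$ is then regular. An open dense set is a fortiori generic, which is the claim; the openness half also matches the $q=1$ specialization of Proposition \ref{open} and Theorem \ref{local R+regu}, so that step is already available.

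The main obstacle, and the place I would spend the most care, is the identification of the non-regular forms with $\partial P$ — specifically proving $\mathrm{int}\,P=\{h\mid h>0\ \text{on}\ K_{\infty}\setminus\{0\}\}$ uniformly, including the degenerate case where $K_{\infty}$ contains a line (which forces $\mathrm{int}\,P=\emptyset$ for odd $d$). Here the full-rank hypothesis on $A$ is what pins down the polyhedral structure of $K_{\infty}=\{x\mid Ax\le 0\}$ and guarantees the compact-section argument behaves well. Everything after this characterization is soft point-set topology.
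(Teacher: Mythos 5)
Your argument is correct, but there is no internal proof in the paper to compare it against: the lemma is imported wholesale as \cite[Theorem 5.1]{HV1} and used as a black box, so the relevant comparison is with Hieu's original argument, which is where the stated hypotheses come from ($K$ cut out by convex polynomial inequalities, $K_\infty$ polyhedral, $A$ of full rank). Your route is genuinely different and in fact strictly more general. The chain you set up --- regularity of $p$ depends only on the leading form $h=p^{\infty}_{d}$; the non-regular forms are exactly those with $h\ge 0$ on $K_{\infty}$ that vanish at some nonzero point of $K_{\infty}$, i.e.\ exactly $P\setminus\mathrm{int}\,P=\partial P$ for the closed convex cone $P=\{h\mid h\ge 0 \text{ on } K_{\infty}\}$; $\partial P$ is nowhere dense; pull everything back along the open linear surjection $p\mapsto p^{\infty}_{d}$, the zero form being non-regular so that the constraint $\deg p=d$ takes care of itself --- is sound at every step. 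I checked in particular both inclusions in $\mathrm{int}\,P=\{h\mid h>0 \text{ on } K_{\infty}\setminus\{0\}\}$ (compactness of the intersection of $K_{\infty}$ with the unit sphere in one direction, the perturbation $h-\epsilon\langle\cdot\,,v\rangle^{d}$ in the other), and the degenerate cases $K_{\infty}=\{0\}$ and $\mathrm{int}\,P=\emptyset$. Two observations. First, nothing in your proof uses the polyhedrality of $K_{\infty}$ or the full-rank assumption on $A$: the only property invoked is that $K_{\infty}$ is a closed cone containing the origin, so you have actually proved the conclusion for an arbitrary nonempty closed constraint set; your closing sentence crediting the full-rank hypothesis with making the compact-section argument behave well is a misdiagnosis, though a harmless one, since the hypotheses are merely unused rather than contradicted. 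Second, even the convexity of $P$ and the case split on $\mathrm{int}\,P=\emptyset$ are dispensable: the boundary of \emph{any} closed set is closed with empty interior, hence nowhere dense, so that step is pure point-set topology. What your approach buys is a short, self-contained and more general proof; what the citation buys the paper is only consistency with \cite{HV1}, whose polyhedral machinery this particular statement does not actually need.
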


Next, we obtain a genericity result of the relative $I$-$\mathbf{R}^q_+$-zero-regularity as follows.

\begin{theorem}
Assume that $K$ be represented by (\ref{250101a}) and the cone $K_\infty$ represented by (\ref{250101b}), where $A$ is full rank. Then the set $\mathbf{GR}^{\mathbf{d}}$ is generic in $\mathbf{P}_{\mathbf{d}}$.
\end{theorem}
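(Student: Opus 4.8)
The plan is to exhibit a generic subset of $\mathbf{P}_{\mathbf{d}}$ that is entirely contained in $\mathbf{GR}^{\mathbf{d}}$; since any superset of a generic set is generic, this yields the claim. The key observation is that relative $I$-$\mathbf{R}^q_+$-zero-regularity is an \emph{existential} condition over the index set $I$, the weight vector $\lambda$ and the auxiliary set $S$: to certify $f\in\mathbf{GR}^{\mathbf{d}}$ it suffices to produce a single admissible triple. I would use the cheapest possible choice, namely $S=K$, $\lambda=e_1=(1,0,\dots,0)$ and $I=\{1\}$. For this choice the admissibility condition $(K_{\bar x})_\infty\subseteq S_\infty\subseteq K_\infty$ holds automatically (with $S_\infty=K_\infty$) for any $\bar x\in K$, and $f_\lambda=f_1$, so relative $\{1\}$-$\mathbf{R}^q_+$-zero-regularity of $f$ reduces exactly to regularity of the scalar polynomial $f_1$ on $K$, i.e.\ to $f_1\in\mathbf{G}^{d_1}$.

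First I would invoke Lemma \ref{250101c}: under the standing hypotheses on $K$ (and $A$ of full rank) the set $\mathbf{G}^{d_1}$ of regular polynomials of degree $d_1$ on $K$ is generic in $\mathbf{P}_{d_1}$, so it contains a countable intersection $\bigcap_k U_k$ of dense open subsets of $\mathbf{P}_{d_1}$. Let $\pi_1:\mathbf{P}_{\mathbf{d}}\to\mathbf{P}_{d_1}$ be the projection onto the first coordinate. Each $\pi_1^{-1}(U_k)=U_k\times\mathbf{P}_{d_2}\times\cdots\times\mathbf{P}_{d_q}$ is open and dense in $\mathbf{P}_{\mathbf{d}}$, whence $\widehat G:=\{f\in\mathbf{P}_{\mathbf{d}}\mid f_1\in\mathbf{G}^{d_1}\}\supseteq\bigcap_k\pi_1^{-1}(U_k)$ is generic in $\mathbf{P}_{\mathbf{d}}$.

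The only genuine bookkeeping point is that membership in $\mathbf{GR}^{\mathbf{d}}$ requires $\deg f_i=d_i$ for every $i$, whereas $\widehat G$ only controls $f_1$; a generic element of $\widehat G$ could a priori have $\deg f_i<d_i$ for some $i\geq 2$. I would remedy this by intersecting with $D:=\{f\in\mathbf{P}_{\mathbf{d}}\mid \deg f_i=d_i,\ i=2,\dots,q\}$. For each such $i$ the set $\{p\in\mathbf{P}_{d_i}\mid \deg p=d_i\}$ is the complement of the proper linear subspace on which the degree-$d_i$ part vanishes, hence open and dense; so $D$, a finite intersection of such open dense sets, is itself open and dense, in particular generic. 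Then $\widehat G\cap D$ contains $\bigcap_k\bigl(D\cap\pi_1^{-1}(U_k)\bigr)$, a countable intersection of dense open sets (a finite intersection of dense open sets is again dense open), and is therefore generic.

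Finally I would check the containment $\widehat G\cap D\subseteq\mathbf{GR}^{\mathbf{d}}$: if $f\in\widehat G\cap D$ then $\deg f_i=d_i$ for all $i$ (the case $i=1$ being forced by $f_1\in\mathbf{G}^{d_1}$), and the triple $S=K$, $\lambda=e_1$, $I=\{1\}$ shows $f$ is relatively $\{1\}$-$\mathbf{R}^q_+$-zero-regular with $S$ on $K$, since $SOL(K_\infty,(f_1)^\infty_{d_1})$ is bounded by $f_1\in\mathbf{G}^{d_1}$. Hence $\mathbf{GR}^{\mathbf{d}}\supseteq\widehat G\cap D$ is generic. The main (and essentially only) obstacle is the degree bookkeeping just described; the real content is carried entirely by the scalar genericity result of Lemma \ref{250101c}, to which the existential structure of the definition lets me reduce.
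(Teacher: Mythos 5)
Your proposal is correct and takes essentially the same route as the paper: both exhibit a generic subset of $\mathbf{P}_{\mathbf{d}}$ inside $\mathbf{GR}^{\mathbf{d}}$ by reducing to the scalar genericity result (Lemma \ref{250101c}) with the certificate $S=K$ and $\lambda$ a standard unit vector. The only difference is minor bookkeeping: the paper uses the full product $\mathbf{G}^{d_1}\times\cdots\times\mathbf{G}^{d_q}$ (so every component is regular and the degree conditions are automatic), whereas you pull back $\mathbf{G}^{d_1}$ through the first-coordinate projection and restore the degree requirement with the open dense set $D$ — your version is, if anything, more explicit about why the exhibited set is generic in the product space.
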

\begin{proof}
Let $\mathbf{G}^{d_i}, i\in\{1,2,\dots,q\}$ be the family of  all polynomials $p$  with $\deg p=d_i$ such that ${\rm{ PSOP }}(K, p)$ is regular, $\mathbf{G}^\mathbf{d}=\mathbf{G}^{d_1}\times\mathbf{G}^{d_2}\times\cdots\times\mathbf{G}^{d_q}$, and let $h=(h_1,h_2,\dots,h_q)\in \mathbf{G}^\mathbf{d}$ be arbitrary. Then we only let $\lambda=(\lambda_1,\lambda_2,\dots,\lambda_q)\in \mathbf{R}^q_+\backslash\{0\}$ with $\lambda_{i_0}=1$ and $\lambda_{i}=0,i\in\{1,2,\dots,i_0-1,i_0+1,\dots,q\}$ and $S=K$. Then we have $SOL(S_\infty, \{\sum_{i=1}^{q}\lambda_i h_i\}^{\infty}_d)=SOL(K_\infty, (h_i)^{\infty}_{d_{i_0}})$. Since $h_{i_0}\in \mathbf{G}^{d_{i_0}}$, we have $SOL(S_\infty, \{\sum_{i=1}^{q}\lambda_i h_i\}^{\infty}_d)$ is bounded. Therefore, we have $h\in \mathbf{GR}^{\mathbf{d}}$. By the arbitrariness of $h\in \mathbf{G}^\mathbf{d}$, we can know $\mathbf{G}^\mathbf{d}\subseteq \mathbf{GR}^{\mathbf{d}}$. Thus, the set $\mathbf{GR}^{\mathbf{d}}$ is generic in $\mathbf{P}_{\mathbf{d}}$, since $\mathbf{G}^\mathbf{d}$ is generic in $\mathbf{P}_\mathbf{d}$ by Lemma \ref{250101c}.\end{proof}

However, the following example shows that the sets $\mathbf{GR}^{\mathbf d}_{w}$ and $\mathbf{GR}^{\mathbf d}_{s}$ may not be generic in $\mathbf{P}_{\mathbf{d}}$.

\begin{example}
Let $\mathbf{d}=(d_1,d_2)=(1,1)$ and $\mathbf{P}_{\mathbf{d}}=\mathbf{P}_{d_1}\times \mathbf{P}_{d_2}$, where
$$\mathbf{P}_{d_1}=\{a_2x_2+a_1x_1+a_0\vert (a_2,a_1,a_0)\in\mathbf{R}^3\}, \mathbf{P}_{d_2}=\{b_2x^2+b_1x+b_0\vert (b_2,b_1,b_0)\in\mathbf{R}^3\}.$$
Let $K=\{x=(x_1,x_2)\in \mathbf{R}^2\vert x_2\geq x_1\geq 0\}$. Then $K_\infty=K$. Consider the set
$$Q=\{(a_2x_2+a_1x_1+a_0,b_2x_2+b_1x_1+b_0)\vert a_1<0,a_2>0,b_1>0,b_2<0, a_2b_1-a_1b_2>0, a_0,b_0\in\mathbf{R}\}$$
Clearly, $Q$ is a open set in $\mathbf{P}_{\mathbf{d}}$. Let $h=(h_1,h_2)\in Q$. Then $(h_{1})^{\infty}_{d_1}(x_{1}, x_{2})=a_2x_2+a_1x_1, (f_{2})^{\infty}_{d_2}(x_{1}, x_{2})=b_2x_2+b_1x_1$.For any $\bar x\in K$, we can easy to calculate $(K_{\bar x})_\infty=\{x\in \mathbf{R}\vert x_2\geq x_1\geq 0\}$. Let the set $S$ with $(K_{\bar x})\subseteq S\subseteq K_\infty$ be arbitrary. Set $H=\{(x_1,x_2)\in\mathbf{R}^2\vert (a_2+b_2)x_2+(a_1+b_1)x_1=0\}$. Then $H$ is a unbounded set. Let $x\in H$ be arbitrary. Then we can easy to prove $x\in SOL^{s}(S_\infty, h^{\infty}_{\mathbf{d}})$, which implies $H\subseteq SOL^{s}(S_\infty, h^{\infty}_{\mathbf{d}})$, and so, $h\notin \mathbf{GR}^{\mathbf d}_{s}$. By the arbitrariness of $h\in Q$, we have $Q\cap\mathbf{GR}^{\mathbf d}_{w}=\emptyset$. Thus, $\mathbf{GR}^{\mathbf d}_{w}$ is not generic in $\mathbf{P}_{\mathbf{d}}$. Moreover, by $\mathbf{GR}^{\mathbf d}_{s}\subseteq\mathbf{GR}^{\mathbf d}_{w}$, we have that  $\mathbf{GR}^{\mathbf d}_{s}$ also is not generic in $\mathbf{P}_{\mathbf{d}}$.
\end{example}

\section{Conclusion}
In this paper, we extend and improve the concept of regularity conditions introduced by Hieu \cite{HV1} and Liu \cite{LDY1}, introducing the relative regularity conditions for polynomial vector optimization problem (see, Remark \ref{compa} \emph{(ii)}). We investigate the fundamental properties and characteristics of the relative regularity conditions. When the constraint is a closed set, we establish equivalence relationships between the $I$-Palais-Smale condition, weak $I$-Palais-Smale condition, $I$-M-tameness, $I$-properness and relative regularity conditions under the $I$-section-boundedness from below for some nonempty index $I\subseteq\{1,2,\dots,q\}$. Under the relative regularity conditions, we investigate nonemptiness of solution sets of a non-convex polynomial vector optimization problem on a nonempty closed set (not necessarily semi-algebraic set). As a consequence, we derive Frank-Wolfe type theorems for a non-convex polynomial vector optimization problem and provide a necessary and sufficient condition of existence of solution for a polynomial scalar optimization problem. Furthermore, even under the relative non-regularity conditions, we prove nonemptiness of solution sets of a non-convex polynomial vector optimization problem on a nonempty closed set. Finally, we explore local properties of relative $I$-$\mathbf{R}^q_+$-zero-regularity, relatively  weak regularity and strong regularity, along with their genericity under convex constraint set condition.  Our results extend and improve the corresponding results of \cite{DTN,Flores24,HV1,LGJ,LGJ1,LDY1,LDY2}.

\end{document}